\numberwithin{equation}{section}
\newfont{\cyr}{wncyr10 scaled 1100}
\newfont{\cyrr}{wncyr9 scaled 1000}
\theoremstyle{plain}
\newtheorem{theorem}{Theorem}[section]
\newtheorem{proposition}[theorem]{Proposition}
\newtheorem{lemma}[theorem]{Lemma}
\newtheorem{corollary}[theorem]{Corollary}
\theoremstyle{definition}
\newtheorem{definition}[theorem]{Definition}
\newtheorem{assumption}[theorem]{Assumption}
\theoremstyle{remark}
\newtheorem{remark}[theorem]{Remark}
\newcommand{\Q}{\mathbb Q}
\newcommand{\Z}{\mathbb Z}
\newcommand{\R}{\mathbb R}
\newcommand{\C}{\mathbb C}
\newcommand{\F}{\mathbb F}
\newcommand{\G}{\mathbb G}
\newcommand{\A}{\mathbb A}
\newcommand{\I}{\mathbb{I}}
\newcommand{\J}{\mathbb{J}}
\newcommand{\defeq}{\vcentcolon=}
\newcommand{\pwseries}[1]{[\![ #1]\!]}
\newcommand{\p}{\mathfrak{p}}
\DeclareMathOperator{\Spec}{Spec}
\DeclareMathOperator{\Pic}{Pic}
\DeclareMathOperator{\End}{End}
\DeclareMathOperator{\Frob}{Frob}
\DeclareMathOperator{\Hom}{Hom}
\DeclareMathOperator{\Gal}{Gal}
\DeclareMathOperator{\GL}{GL}
\DeclareMathOperator{\M}{M}
\DeclareMathOperator{\Fil}{Fil}
\DeclareMathOperator{\rank}{rank}
\DeclareMathOperator{\Ta}{Ta}
\DeclareMathOperator{\et}{\text{\'et}}
\newcommand{\res}{\mathrm{res}}
\newcommand{\cores}{\mathrm{cores}}
\newcommand{\cor}{\mathrm{cor}}
\newcommand{\cyc}{{\mathrm{cyc}}}
\newcommand{\ord}{\mathrm{ord}}
\newcommand{\rig}{\mathrm{rig}}
\newcommand{\rec}{\mathrm{rec}}
\newcommand{\dR}{\mathrm{dR}}
\newcommand{\unr}{\mathrm{unr}}
\newcommand{\ES}{\mathrm{ES}}
\newcommand{\ST}{\mathrm{ST}}
\definecolor{Indigo}{rgb}{0.2,0.1,0.7}
\definecolor{Violet}{rgb}{0.5,0.1,0.7}
\definecolor{White}{rgb}{1,1,1}
\definecolor{Green}{rgb}{0.1,0.9,0.2}
\newcommand{\longmono}{\mbox{\;$\lhook\joinrel\longrightarrow$\;}}
\newcommand{\mat}[4]{\left(\begin{array}{cc}#1&#2\\#3&#4\end{array}\right)}
\newcommand{\smallmat}[4]{\bigl(\begin{smallmatrix}#1&#2\\#3&#4\end{smallmatrix}\bigr)}
\newcommand{\invlim}{\mathop{\varprojlim}\limits}
\begin{document}

\include{thebibliography}

\title[Quaternionic families of Heegner points and $p$-adic $L$-functions]{Quaternionic families of Heegner points and $p$-adic $L$-functions}
\today
\date{}
\author{M. Longo, P. Magrone, E. R. Walchek}

\thanks{The authors thank Stefano Vigni, Andrea Mori and Francesc Castella  for several discussions 
about the topics of this paper. They also thank David Loeffler for his interest in this work and his encouragement. M.L. is partially supported by PRIN 2022 ``The arithmetic of motives and $L$-functions'' and by the GNSAGA group of INdAM}

\begin{abstract}
Following up a previous article of the authors which studies the interpolation of certain anticyclotomic $p$-adic $L$-functions associated to quaternionic modular forms in a Hida family, we extend the work of F. Castella on the interpolation and specialization of big Heegner points to the quaternionic setting. We prove an explicit reciprocity law relating the big $p$-adic $L$-function to the big Heegner points in this quaternionic setting. We deduce arithmetic results for 
the Selmer group of Hida's Big Galois representation and its specialization over a quadratic imaginary field satisfying a relaxed Heegner hypothesis. 
\end{abstract}

\address{Dipartimento di Matematica, Universit\`a di Padova, Via Trieste 63, 35121 Padova, Italy}
\email{mlongo@math.unipd.it}

\subjclass[2010]{}

\keywords{$p$-adic modular forms, Shimura curves, Heegner points}

\maketitle


\section{Introduction}\label{Intro}

This note arises with the aim of generalizing to the setting of indefinite quaternion algebras studied in \cite{LV-MM} some of the results contained in the papers \cite{Castella-MathAnn, Castella} by Castella; we also deduce from this extension some arithmetic applications to Selmer groups of Big Galois representations attached to a Hida family and to Bloch-Kato Selmer groups of its specializations 
over a quadratic imaginary field which satisfies a relaxed Heegner hypothesis. 

The main result of these two papers \cite{Castella-MathAnn, Castella} by Castella is the description of the specialization of big Heegner points introduced by Howard \cite{Howard-Inv} 
at certain arithmetic morphisms of a given Hida family in terms of generalized Heegner cycles. The approach of Castella, following previous works by Darmon--Rotger \cite{DR}, is to use density of weight $2$ primes in Hida family, and the explicit description of big Heegner points as limits of Heegner points (which therefore, by construction, are directly related to the specializations at weight $2$ of big Heegner points). The relation between higher weight specializations of big Heegner points and generalized Heegner cycles is obtained using families of $p$-adic $L$-functions as a bridge between the two, and the comparison is made possible by the density of weight $2$ specializations. 

In the quaternionic setting, Howard's big Heegner points have been introduced by Fouquet \cite{Fouquet} (even over totally real number fields) and one of the authors of this paper in collaboration with Vigni \cite{LV-MM}. The natural question is to what extent the techniques and approach in \cite{Castella-MathAnn, Castella, DR} can be adapted to the quaternionic setting. It should be noticed that the $p$-adic $L$-function 
which appears as a bridge in the approach of \cite{Castella} is a $p$-adic variation of the $p$-adic $L$-function constructed in \cite{CH} following the approach of Brako\v{c}evi\'{c} \cite{Brako} (and Bertolini--Darmon--Prasanna \cite{BDP}). This approach makes use of Serre--Tate expansions of modular forms, and a part of it can be adapted to 
the quaternionic setting: this portion of the work has been done in \cite{LMW}. The first goal of this work is to accomplish the comparison between higher weight specialization of big Heegner points in the quaternionic setting and generalized Heegner cycles, using again the $p$-adic family of $p$-adic $L$-functions and weight $2$ specializations as a bridge. 

Our main result proves an equality (up to units, \emph{cf.} Theorem \ref{thmL=L}) 
\begin{equation}\label{main}
\mathscr{L}_{\I,\boldsymbol{\xi}}^\mathrm{an}=\mathscr{L}_{\I,\boldsymbol{\xi}}^\mathrm{alg}\end{equation} of two $p$-adic $L$-functions attached to a quaternionic Hida family $\I$ and a $p$-adic family of Hecke characters $\boldsymbol{\xi}$ of a quadratic imaginary extension $K/\Q$.
We explain in the follwing lines (a part of) the terminology used and the 
nature of the objects in \eqref{main}; the reader is also referred to Section \ref{assumptions} for a more detailed discussion on the hypothesis of this paper and the terminology.
We start by saying that $\mathbb{I}$ is a primitive branch of a Hida family passing through a fixed $p$-stabilized newform $f\in S_{k_0}(\Gamma_0(Np))$ of trivial character and even weight $k_0\equiv 2\mod{2(p-1)}$, and we assume throughout that the 
restriction of the residual $p$-adic representation attached to $f$ to a decomposition group at $p$ is  $p$-distinguished and irreducible. The discriminant $-D_K$ of $K$ is assumed to be coprime with $Np$, $p$ is assumed to be split in $K$, and $N$ is assumed to factor as $N=N^+N^-$ with $(N^+,N^-)=1$, $\ell\mid N^+$ (respectively, $\ell\mid N^-$) if and only if $\ell$ is split (respectively, inert) in $K$ with $N^+\geq 4$ and $N^-$ a product of an even number of distinct primes. Since the case $N^-=1$ is well-known (\cite{Castella, Castella-MathAnn}) 
we assume that $N^->1$ and we say that $K$ satisfies a \emph{generalized Heegner hypothesis} in this case. 
Equality \eqref{main} holds in 
$\widetilde{\I}[[\widetilde{\Gamma}_\infty]]$ where $\widetilde{\I}=\I\otimes\Z_p^\unr$ and 
$\widetilde{\Gamma}_\infty$ is the Galois group of the union of the ring class fields of $K$ of conductors $cp^n$ for all integers $n\geq 1$, where $c\geq 1$ is a fixed integer coprime with $D_KNp$, 
which depends on $\boldsymbol{\xi}$. 
Here $\mathscr{L}_{\I,\boldsymbol{\xi}}^\mathrm{an}$ is the family of $p$-adic $L$-functions constructed in \cite{LMW} and alluded to before, while $\mathscr{L}_{\I,\boldsymbol{\xi}}^\mathrm{alg}$ is the $p$-adic $L$-function which arises as the evaluation of a big Perrin-Riou logarithmic map at the quaternionic big Heegner point of tame conductor $c$ relative to the imaginary quadratic field $K$.
As mentioned before, Equality \eqref{main} is obtained by an explicit comparison of the weight $2$ specializations of the two sides. 

We list some arithmetic application of \eqref{main} (see \S\ref{secArithApp} for a more detailed discussion). We suppose that the generic root number of $\I$ is $+1$ 
(see \S\ref{secArithApp} and the references therein).  
Let $\mathfrak{Z}_c$ be the quaternionic Big Heegner point of conductor $c$ introduced in \cite{LV-MM}, and whose construction is recalled in \S\ref{sec8.2}. As a consequence 
of \eqref{main}, we obtain the following result (see Corollary \ref{coro})
\begin{equation}\label{THMINTRO1}
\text{$\mathfrak{Z}_c$ is not $\I$-torsion}.\end{equation}
This proves \cite[Conjecture 10.3]{LV-MM}. As a consequence of \eqref{THMINTRO1}, we obtain 
some results on Selmer groups of Hida Big Galois representation attached to $\I$ and its specializations. 
More precisely, let $\mathbf{T}^\dagger$ be the self-dual twist of Hida Big Galois representation 
attached to $\I$, which is a free $\I$-module of rank $2$ equipped with a $\Gal(\overline{\Q}/\Q)$-action such that for each arithmetic morphism $\nu:\I\rightarrow F_\nu$ the specialization $\mathbf{T}^\dagger_\nu=\mathbf{T}^\dagger\otimes_{\I}F_\nu$ is isomorphic to the self-dual twist of the representation attached to the specialization $f_\nu$ of the Hida family $\I$ at $\nu$
(the reader is referred to \S\ref{BigGaloisRep} for details; here $F_\nu$ is a finite extension of $\Q_p$ containing all the Fourier coefficients of the normalized eigenform $f_\nu$). 
Let $\widetilde{H}^1_f(K,\mathbf{T}^\dagger)$ be Nekov\'{a}\v{r}'s extended Selmer group attached to $\mathbf{T}^\dagger$. Then as a consequence of the fact that $\mathfrak{Z}_1$ is not $\I$-torsion we see that 
(see Corollary \ref{coro1}) 
\begin{equation}\label{THMINTRO2}
\rank_\I\Bigl(\widetilde{H}^1_f(K,\mathbf{T}^\dagger)\bigr)=1.\end{equation}
Finally, for an arithmetic morphis $\nu$ as before, let $H^1_f(K,\mathbf{T}_\nu^\dagger)$ denote 
the Bloch-Kato Selmer group of the self-dual representation $\mathbf{T}_\nu^\dagger$. Then  
for all arithmetic $\nu$ except possible a finite number of them
(see again Corollary \ref{coro1})
\begin{equation}\label{THMINTRO3}
\dim_{F_\nu}\Bigl(H^1_f(K,\mathbf{T}^\dagger_\nu)\bigr)=1.\end{equation}
The results \eqref{THMINTRO1}, \eqref{THMINTRO2} and \eqref{THMINTRO3} are 
the main arithmetic contributions of this paper; we also remark that their novelty with respect to 
the existing literature (\emph{e.g.} \cite{Castella}, \cite{JLZ}, \cite{BL-Coleman}) is that here $K$ satisfied a \emph{relaxed} Heegner hypothesis
in which we allow an \emph{even} (positive) number of primes dividing $N$ to be inert in $K$. 

We finally make a comment on the relation between generalized Heegner cycles and specialization of Big Heegner points, in the current quaternionic setting (see \S\ref{secspecialization} for 
a more detailed discussion). Since $\mathscr{L}_{\I,\boldsymbol{\xi}}^\mathrm{an}$ specializes at higher weights to generalized Heegner cycles by a result of one of the authors of this paper in \cite{Magrone}, this provides a relation between the specialization at higher weights of the Perrin-Riou big logarithm evaluated at the relevant big Heegner point (\emph{cf.} Theorem \ref{teospec}).  
However, at the moment our result is not completely satisfactory, for the following reason. The construction of  
$\mathscr{L}_{\I,\boldsymbol{\xi}}^\mathrm{an}$ performed in \cite{LMW} is based on a generalization of Hida--Ohta theory \cite{Ohta-ES, OhtaC, OhtaMA} to the quaternionic setting, which provides a canonical pairing in the Hodge--Tate filtration of the inverse limit of \'etale cohomology group of the modular curves of $p$-power level.  In the $\GL_2$-case, a result of Kings--Loeffler--Zerbes \cite[Theorem 10.1]{KLZ.ERL} allows to relate the higher weight specialization of Ohta's pairing to a completely different pairing, arising from Kuga--Sato varieties over modular curves: to the best of our knowledge, this result is not available in the quaternionic setting (the missing ingredient is a suitable generalization of Beilinson--Kato elements used in \cite{KLZ.ERL} to obtain the aforementioned result). 

\section{Notation and assumptions}\label{assumptions}

We fix throughout the text an embedding $\overline{\Q}\hookrightarrow\C$ and embeddings $\overline{\Q}\hookrightarrow\bar\Q_\ell$ for each prime number $\ell$.  

Fix a positive integer $N$ and a prime number $p\nmid N$. 
Let $K/\Q$ be a quadratic imaginary field of discriminant $-D_K$ prime to $Np$, and factor 
$N=N^+N^-$, where $N^+$ is divisible only by primes which are split in $K$, and $N^-$ is a square-free integer, divisible only by primes which are inert in $K$. Assume that $N^+\geq 4$ and $p$ is split in $K$; write $p=\mathfrak{p}\bar{\mathfrak{p}}$, where $\mathfrak{p}$ is the prime ideal corresponding to the embedding $\overline{\Q}\hookrightarrow\overline{\Q}_p$. 

\subsection{Quaternion algebras}\label{subsec.QuatAlg}
Let $B$ be the quaternion algebra of discriminant $N^-$, defined over $\Q$, let $\mathcal{O}_B$ be the maximal order of $B$. 
For primes $\ell\nmid N^-$, we fix isomorphisms $i_\ell\colon B_\ell\defeq B\otimes_\Q\Q_\ell\simeq\M_2(\Q_\ell)$. 
Take the sequence of Eichler orders $\mathcal{O}_B\supseteq R_0\supseteq R_1\supseteq\dots$ such that each $R_m$ has level $N^+p^m$ and the image of $R_m$ is equal to the order of upper triangular matrices modulo $\ell^{\ord_\ell(N^+p^m)}$. Fix also $i_\infty\colon B_\infty\defeq B\otimes_\Q\R\simeq\M_2(\R)$ coming from the splitting at $\infty$. Finally, since $K$ splits $B$, we may fix an embedding of $\Q$-algebras 
$\iota_K\colon K\hookrightarrow B$; we will sometimes write $x$ for $\iota_K(x)$ for $x\in K$ when the context is clear. 

From Section \ref{secBHP} on, to obtain a clear description of CM points, it will be convenient to choose the isomorphisms $i_\ell$ and $i_\infty$ (and, consequently, the Eichler orders $R_m$) as follows. Take the $\Q$-basis $\{1,\theta\}$ of $K$, where $\theta=\frac{D'+\sqrt{-D_K}}{2}$, being $D'=D_K$ if $2\nmid D_K$ and $D'=D_K/2$ if $2\mid D_K$. For each place $v\mid N^+p\infty$ of $\Q$, we may assume the isomorphism $i_v\colon B_v\cong \M_2(\Q_v)$
to satisfy
\[i_v(\theta)=\mat {\mathrm{T}_{K/\Q}(\theta)}{-\mathrm{N}_{K/\Q}(\theta)}{1}{0}.\]

Define the orthogonal idempotents $e$ and $\bar{e}$ in $K\otimes_\Q K$ to be: 
\[e=\frac{1\otimes{\theta}-\theta\otimes 1}{(\theta-\bar{\theta})\otimes 1}\hspace{15pt}\text{and}\hspace{15pt}\bar{e}=\frac{\theta\otimes 1-1\otimes\bar\theta}{(\theta-\bar{\theta})\otimes1}.\]

A simple computation shows that $e+\bar{e}=1$. 
Let $\ell\mid N^+p$ be a prime number. Then $\ell$ splits in $K$ as $\ell=\mathfrak{l}\bar{\mathfrak{l}}$, where 
$\mathfrak{l}$ is the prime ideal corresponding to the chosen embedding $\overline{\Q}\hookrightarrow\overline{\Q}_\ell$, so $K_\ell=K\otimes_\Q\Q_\ell$ splits as the direct sum $\Q_\ell e\oplus \Q_\ell \bar{e}$ of two copies of $\Q_\ell$. We have a canonical map 
\[j_\ell\colon K\otimes_\Q K\longmono K\otimes_\Q\Q_\ell\longmono B_\ell\overset{i_\ell}\longrightarrow\M_2(\Q_\ell)\] and one may verify that $j_\ell(e)=\smallmat 1000$ and $j_\ell(\bar{e})=\smallmat 0001$.  

Denote by $i\colon K\hookrightarrow \M_2(\Q)$ the $\Q$-linear map which takes $\theta$ to $\smallmat{\mathrm{T}_{K/\Q}(\theta)}{-\mathrm{N}_{K/\Q}(\theta)}{1}{0}$. Then we have $i=i_\infty\circ\iota_K$ and we obtain a map \[j\colon K\otimes_\Q K\longmono\M_2(K)\] defined by $j(x\otimes y)=i(x)y$, and one verifies again that $j(e)=\smallmat 1000$ and $j(\bar{e})=\smallmat 0001$. 

\subsection{Shimura curves} 
Let $X_m$ denote the Shimura curve of $V_1(N^+p^m)$-level structure attached to the indefinite quaternion algebra $B$. By the action of $B^\times$ via fractional linear transformations through the embedding $i_\infty\colon B^\times\hookrightarrow\GL_2(\R)$ on $\mathcal{H}^\pm=\C\setminus\R$, often identified with $\Hom_\R(\C,B_\infty)$, we have, for any integer $m\geq 0$,
\begin{equation}\label{Xm}
X_m(\C)= B^\times\backslash(\mathcal{H}^\pm\times\widehat{B}^\times)/U_m,\end{equation} 
where $U_m$ is the subgroup of $\widehat{R}_m^\times$ consisting of elements whose $\ell$-component is upper triangular modulo $\ell^{\ord_\ell(N^+p^m)}$ for all primes $\ell\nmid N^-$. We will write $[(x,g)]$ for a point in $X_m(\C)$.
Define $ J_m=\mathrm{Jac}( X_m)$ and 
$\Ta_p=\invlim 
\Ta_p( J_m)$ (the inverse limit is computed with respect to the canonical projection maps $J_{m+1}\rightarrow J_{m}$ for $m\geq 1$).
The $\Z_p$-module $\Ta_p( J_m)$ is equipped with a continuous action of the absolute Galois group of $\Q$ and an action of Hecke operators $T_\ell$ for primes 
$\ell\nmid Np$ and $U_\ell$ for primes $\ell\mid N^+p$ attached to the indefinite quaternion algebra 
$B$ (\cite[\S6.2]{LV-MM}); denote $\mathfrak{h}_m\subseteq\End_{\Z_p}(\Ta_p( J_m))$ the Hecke algebra generated by these operators. Taking the projective limit of these Hecke algebras one defines a big Hecke algebra $\mathfrak{h}_\infty$ acting on $\Ta_p$; we may define a Hida ordinary idempotent $e^\ord$ attached to $U_p$
and set $\mathfrak{h}_\infty^\ord=e^\ord\mathfrak{h}_\infty$. 

\subsection{Moduli spaces} The Shimura curve $X_m$ has a model $\mathcal{X}_m$ over $\Z_p$ which is constructed by means of na\"ive level $V_1(N^+)$ structures and Drinfeld level structures at $p$. For a subring $\mathcal{O}\subseteq\bar\Q_p$, and assuming that $N^+\geq 4$, a $\mathcal{O}$-rational point of $\mathcal{X}_m$ is a quadruplet $(A,\iota,\alpha,\beta)$ where $(A,\iota)$ is a quaternionic multiplication abelian surface, \emph{i.e.} an abelian surface $A\rightarrow\Spec(\mathcal{O})$ equipped with an homomorphism $\iota\colon \mathcal{O}_B\hookrightarrow\End(A)$, $\alpha$ is a level $V_1(N^+)$ structure and $\beta$ is a Drinfeld level structure, \emph{i.e.} a finite flat subgroup scheme of $eA[p^m]$ which is locally free of rank $p^{2m}$ equipped with a choice of generator in the sense of Katz--Mazur. 

We call \emph{test objects over $\mathcal{O}$} sets of the form $T=(A,\iota,\alpha,\beta)$ where $(A,\iota)$ is a QM abelian surface, $\alpha$ is a level $V_1(N^+)$ structure on $A$ and $\beta$ is a Drinfeld level $\Gamma_1(p^m)$ structure on $A$.
A \emph{modular form} $\mathcal{F} \in S_k(\Gamma_m, \mathcal{O})$ is then a rule that assigns to each such test object $T$ over an $\mathcal{O}$-algebra $R$ a differential $\mathcal{F}(T) \in \underline{\omega}_A^{\otimes k}$,
satisfying a base-change compatibility condition. Equivalently, a modular form $\widetilde{\mathcal{F}}$ can be viewed as a rule that assigns to each test object $T = (A, \iota, \alpha, \beta)$ over an $\mathcal{O}$-algebra $R$ and each section $\omega$ of $\underline{\omega}_A^{\otimes k}$, a value $\widetilde{\mathcal{F}}(T, \omega) \in R$ satisfying a base-change compatibility condition and is homogeneous of weight $k$ in $\omega$. These two descriptions are equivalent via the relation $\mathcal{F}(T) = \widetilde{\mathcal{F}}(T, \omega) \cdot \omega$ for any choice of section $\omega$ of $\underline{\omega}_A^{\otimes k}$.

If $k=2$ and $\mathcal{O}=L$ is a field, one can check that this notion coincides with the notion of modular forms as global sections of $H^0(X_{m/L},\Omega^1)$. More generally, for general $k$ we see that $S_k(\Gamma_m,L)$ is the $L$-vector space of global sections of $H^0(X_{m/L},\underline{\omega}_m^{\otimes{k}})$ where $\underline{\omega}_m=e\pi_*\Omega^1_{\mathcal{A}_m}$, $e$ is the idempotent defined in \S\ref{subsec.QuatAlg} and $\pi\colon\mathcal{A}_m\rightarrow\mathcal{X}_m$ is the universal object. 

\subsection{Hida families}
Let $f\in S_{k_0}(\Gamma_0(Np))$ be an elliptic newform \textit{or} the $p$-stabilization of a newform $f^\sharp\in S_{k_0}(\Gamma_0(N))$. Let $\mathcal{O}$ be the valuation ring of a finite unramified extension of $\Q_p$ and suppose that the normalized Fourier expansion $\sum_{n\geq 1}a_nq^n$ of $f$ is contained in $\mathcal{O}[[q]]$. 

\begin{assumption}
We suppose that
\begin{itemize} 
\item $f$ is $p$-ordinary, \emph{i.e.} $a_p\in\mathcal{O}^\times$; 
\item The restriction to a decomposition group at $p$ of the 
residual Galois representation $\bar\rho$ attached to $f$ is irreducible and $p$-distinguished (which means that 
the semisimplification of $\bar\rho$ is the direct sum of two distinct characters); 
\item $k_0\equiv 2\pmod{2(p-1)}$. 
\end{itemize}
\end{assumption}
The modular form $f$ gives rise to a homomorphism $\mathfrak{h}_\infty^\ord\rightarrow \mathcal{O}$ which factors 
through the quotient $\mathfrak{h}_\infty^\ord/\mathfrak{a}$ for a unique minimal ideal $\mathfrak{a}$. Denote $\I$ the integral closure of $\mathfrak{h}_\infty^\ord/\mathfrak{a}$. 
Then $\I$ is a finite flat extension of $\Lambda_\mathcal{O}=\mathcal{O}[[\Gamma]]$ where $\Gamma=1+p\Z_p$. 

An $\mathcal{O}$-linear homomorphism $\nu\colon\I\rightarrow\overline{\Q}_p$ is \emph{arithmetic} if its restriction to $\Lambda=\Z_p[[\Gamma]]$ is of the form $\nu (\gamma)=\psi_\nu (\gamma)\gamma^{k_\nu -2}$ for a finite order character $\psi_\nu \colon\Gamma\rightarrow\overline\Q_p^\times$ (the \textit{wild character} of $\nu$) and an integer $k_\nu \geq 2$ (the \textit{weight} of $\nu$); the pair $(k_\nu ,\psi_\nu )$ is the \emph{signature} of $\nu $. Let $F_\nu$ be the finite extension of $\Q_p$ whose valuation ring is $\mathcal{O}_\nu\defeq\I/\ker(\nu)\I$, which contains $\mathcal{O}$. Then there exists a power series $\mathbf{f}=\sum_{n\ge 1}\mathbf{a}_nq^n\in\I[[q]]$ 
such that for each arithmetic morphism $\nu$ the power series 
$f_\nu\defeq \nu(\mathbf{f})=\sum_{n\geq1}\nu(\mathbf{a}_{n}) q^n\in \mathcal{O}_\nu[[q]]$ is the 
$q$-expansion of a normalized $\GL_2$ modular form of weight $k_\nu$, character $\psi_\nu$ and level $\Gamma_1(Np^{m_\nu})$ where $m_\nu$ is the maximum between $1$ and the conductor of $\psi_\nu$. 
We call $f_\nu$ the \emph{specialization} of $\mathbf{f}$ at $\nu$. 
One may also write $\nu(\mathbf{f})=\mathbf{f}_\nu$ and $\nu(\mathbf{a}_n)=\mathbf{a}_{n,\nu}$ for the specializations at $\nu$. 
We assume throughout that 

\begin{assumption}
    There is $\nu_0$ of weight $k_0$ and trivial wild character such that $f_{\nu_0}=f.$
\end{assumption} 

Note that specializations $f_\nu$ of $\I$ at arithmetic morphisms $\nu$ of weight $k\equiv k_0\pmod{p-1}$ and trivial characters are eigenforms in $S_k(\Gamma_0(N))$, and for $k\neq 2$ are newforms.

\section{Galois representations} 
The goal of this section is to collect the results on families of Galois representations that will be used in this paper. 

\subsection{Modular forms}
We let $V_{{f}_{\nu}}$ denote the $p$-adic Galois $\Gal(\overline{\Q}/\Q)$-representation attached to ${f}_{\nu}$ and determined by the property that the characteristic polynomial of the \emph{arithmetic} Frobenius element $\Frob_\ell$ at a prime ideal $\ell\nmid Np$ is equal to 
the \emph{Hecke polynomial} at $\ell$: \[P_{\ell,\nu}(X)=X^2-\mathbf{a}_{\ell,\nu} X+\psi_\nu(\ell)\ell^{k_\nu-1}.\]  
The contragredient representation $V_{{f}_{\nu}}^*$ of $V_{{f}_{\nu}}$ is then determined by the property that the characteristic polynomial of the  \emph{geometric} Frobenius element $\Frob_\ell^{-1}$ at a prime ideal $\ell\nmid Np$ is equal to $P_{\ell,\nu}(X)$; then we have 
$V_{{f}_{\nu}}^*(k-1)\simeq V_{{f}_{\nu}}$. 

\subsection{Critical characters}
We now introduce critical characters. Let $G_\Q=\Gal(\overline\Q/\Q)$ and $\chi_\cyc\colon G_\Q\rightarrow\Z_p^\times$ be the cyclotomic character. We denote by $\Q_p^\cyc=\Q(\zeta_{p^\infty})=\bigcup_{n\geq 1}\Q(\zeta_{p^n})$ 
the $p$-cyclotomic extension of $\Q$, 
where, for all integers $n\geq 1$, $\zeta_{p^n}$ is a primitive $p^n$-root of unity. Set $G_\infty^\cyc=\Gal(\Q(\zeta_{p^\infty})/\Q)$. The cyclotomic character then induces an isomorphism 
$\chi_\cyc\colon G_\infty^\cyc\overset\sim\rightarrow\Z_p^\times$. 
Factor $\chi_\cyc$ as $\chi_\cyc(x)=\omega(x)\cdot\langle x\rangle$, where 
$\omega\colon G_\Q\rightarrow\mu_{p-1}$ takes values in the group $\mu_{p-1}$ of $(p-1)$-th roots of unity in $\Z_p^\times$ and $\langle\cdot\rangle\colon G_\Q\rightarrow\Gamma$ takes values in the group of principal units.
Let $z\mapsto [z]$ denote the inclusions of group-like elements
$\Z_p^\times\hookrightarrow\Z_p[[\Z_p^\times]]^\times$ and 
$\Gamma\hookrightarrow\Z_p[[\Gamma]]^\times$. 
The critical character $\Theta\colon G_\Q\rightarrow\Lambda^\times$ defined in \cite[Definition 2.1.3]{Howard-Inv} by (recall that $k_0\equiv2\mod{2(p-1)}$) 
\[\Theta(\sigma)=[\langle\sigma\rangle^{1/2}],\] where 
$x\mapsto x^{1/2}$ is the unique square root of $x\in\Gamma$. We still write $\Theta\colon G_\Q\rightarrow\I^\times$ for the composition of $\Theta$ with the canonical inclusion $\Lambda\hookrightarrow\I$.
We also need some variants of this character, that we now introduce. Write $\boldsymbol{\theta}\colon\Q^\times\backslash\A_\Q^\times\rightarrow\I^\times$ for the composition of $\Theta$ with the geometrically normalized reciprocity map $\mathrm{rec}_\Q.$
Since $\Theta$ factors through $G_\infty^\cyc$, 
precomposing it with the inverse of the 
cyclotomic character, we obtain a character of $\Z_p^\times$ which we denote with $\boldsymbol{\vartheta}\colon\Z_p^\times\rightarrow\I^\times$. 
If $\nu \colon\I\rightarrow\overline\Q_p$ is an arithmetic morphism  of signature $(k_\nu ,\psi_\nu )$ we put ${\theta}_\nu =\nu \circ\boldsymbol{\theta}$ 
and ${\vartheta}_\nu =\nu \circ\boldsymbol{\vartheta}$. For any $x\in\Z_p^\times$,
if $k_\nu \equiv k_0\equiv 2\mod{2(p-1)}$, then we have
\[{\vartheta}_\nu (x)
=\psi_\nu ^{1/2}(\langle x\rangle)\cdot  x^{\frac{k_\nu -2}{2}}.
\] Finally, we introduce another variant of critical characters. Denote by $\mathbf{N}_{K/\Q}\colon\A_K^\times\rightarrow\A_\Q^\times$ the adelic norm map, by $\mathbf{N}_\Q\colon\A^\times_\Q\rightarrow\Q^\times$ the adelic absolute value and let 
$\mathbf{N}_K\colon\A^\times_K\rightarrow\Q^\times$ denote the composition $\mathbf{N}_K=\mathbf{N}_\Q\circ\mathbf{N}_{K/\Q}$.
Define 
the 
character ${\boldsymbol{\chi}}\colon K^\times\backslash\widehat{K}^\times\rightarrow\I^\times$  
by ${\boldsymbol{\chi}}=\boldsymbol{\theta}\circ \mathbf{N}^{-1}_{K/\Q}.$
For an arithmetic morphism  $\nu $, define 
$\hat{\chi}_\nu =\nu \circ\boldsymbol{\chi}$.
Since $\chi_\cyc \circ \mathrm{rec}_\Q$ is the $p$-adic avatar of the adelic absolute value $\boldsymbol{\mathrm{N}}_\Q\colon\A_\Q^\times\rightarrow\Q^\times$, we obtain, for $x\in\widehat{K}^\times$ and $k_\nu \equiv k_0\equiv 2\mod{2(p-1)}$, 
\begin{equation}\label{chi_k}
\hat\chi_\nu (x)= 
\psi_\nu ^{-1/2}(\langle \mathbf{N}_K(x)x_\mathfrak{p}  x_{\bar{\mathfrak{p}}}\rangle)\cdot
(\mathbf{N}_K(x)x_\mathfrak{p} x_{\bar{\mathfrak{p}}})^{-\frac{k_\nu -2}{2}}.
\end{equation}

\subsection{Big Galois representations}\label{BigGaloisRep}
Using the critical character introduced before, we now describe the big Galois representation associated with the primitive branch $\I$. 
Consider the ordinary submodule $\Ta_p^\ord=e^{\ord}\Ta_p$ of $\Ta_p$. Since $\I$ is a primitive branch  of the Hida ordinary Hecke algebra $\mathfrak{h}_\infty^\ord$, as a consequence of the Jacquet--Langlands correspondence for $p$-adic families of modular forms (\cite[Proposition 6.4]{LV-MM}, see also  \cite{Chenevier}), one has that
\[\mathbf{T}=\Ta_p^\ord\otimes_{\mathfrak{h}_\infty^\ord}\I\] 
is a free $\I$-module of rank $2$ equipped with a $G_\Q=\Gal(\overline\Q/\Q)$-action, having the following property: $\mathbf{T}$ is  
unramified outside $Np$ and the characteristic polynomial of the 
\emph{arithmetic} Frobenius element $\Frob_\ell$ at a prime ideal $\ell\nmid Np$ is equal to 
\[P_\ell(X)=X^2-T_\ell X+(\chi_\cyc\Theta^2)(\ell).\]  
Thus for each arithmetic character $\nu\colon\I\rightarrow F_\nu$, $\mathbf{T}_\nu=\mathbf{T}\otimes_{\I,\nu}F_\nu$ is isomorphic to $V_{{f}_\nu}$, where the tensor product is taken with respect to $\nu$, composed with the inclusion $\mathcal{O}_\nu\subseteq F_\nu$ as indicated. 
We also set $\mathbf{T}_\nu^*:=V_{{f}_\nu}^*$.

We now describe the ordinary filtration of $\mathbf{T}$. Let $v$ be the place of $\overline\Q$ over $p$ corresponding to the fixed embedding $\overline\Q\hookrightarrow\overline\Q_p$, 
and let $D_v\cong  G_{\Q_p}=\Gal(\overline\Q_p/\Q_p)$ denote
the decomposition group 
of $G_\Q$ at $v$ and $I_v\subseteq D_v$ the inertia subgroup, isomorphic to the inertia subgroup $I_{\Q_p}$ 
of $G_{\Q_p}$ via the isomorphism $D_v\cong  G_{\Q_p}$. 
Let $\eta_v\colon D_v/I_v\rightarrow \I$ be the unramified 
character defined by $\eta_v(\Frob_v)=U_p$, where $\Frob_v$ is an arithmetic Frobenius element of $D_v/I_v$; we identify $\eta_v$ with a character of $G_{\Q_p}/I_{\Q_p}$.   
There is a short exact sequence of $G_{\Q_p}$-modules (depending on the choice of $v$, and thus on $\overline\Q\hookrightarrow\overline\Q_p$)
\begin{equation}\label{filtration}
0\longrightarrow \mathbf{T}^+\longrightarrow  \mathbf{T}\longrightarrow\mathbf{T}^-\longrightarrow 0\end{equation}
such that both $\mathbf{T}^+$ and $\mathbf{T}^-$ are free $\I$-modules of rank $1$, and $G_{\Q_p}$ acts on $ \mathbf{T}^+$ via 
{$\eta_v^{-1}\chi_\cyc\Theta^2$} and acts on the unramified quotient 
$\mathbf{T}^-$ via $\eta_v$; see \cite[\S5.5, Corollary 6.5]{LV-MM} for details. 
As $G_{\Q_p}$-representations we then have an isomorphism 
\[\mathbf{T}\cong \mat{\eta_v^{-1}\chi_\cyc\Theta^2}{*}{0}{\eta_v}.\] 

We now use the critical character to twist $\mathbf{T}$ and get a family of self-dual Galois representations. 
Define the \emph{critical twist} of $\mathbf{T}$ to be the twist 
\[\mathbf{T}^\dagger\defeq\mathbf{T}\otimes\Theta^{-1}\]of $\mathbf{T}$ by the Galois action of $\Theta$ (\cite[\S6.4]{LV-MM}).  
Then $\mathbf{T}_\nu^\dagger=\mathbf{T}^\dagger\otimes_{\I,\nu}F_\nu$ 
is isomorphic to the self-dual twist $V_{{f}_\nu}^\dagger\defeq V_{{f}_\nu}^*(k/2)$ of the $p$-adic Galois representation $V_{{f}_\nu}$.

\subsection{Families of Hecke characters}\label{sec.BigCharacters}
We now construct a certain family of Hecke characters $\boldsymbol{\xi}$ after a fixed Hecke character
\[\lambda\colon K^\times\backslash\A_K^\times\longrightarrow\C^\times\] 
of infinity type $(1,0)$, unramified at $p$, and for which we suppose that the $p$-adic avatar $\hat\lambda\colon K^\times\backslash\widehat{K}^\times\rightarrow{\overline\Q_p^\times}$ of $\lambda$ takes values in $\mathcal{O}^\times$. If $\lambda$ has conductor $\mathfrak{c}$ prime to $p$, then $\hat{\lambda}$ factors through $\Gal(K({\p^\infty\mathfrak{c}})/K)$, where $K({\p^\infty\mathfrak{c}})=\bigcup_{n\geq 1}K({\p^n\mathfrak{c}})$ and $K({\p^n\mathfrak{c}})$ is the ray class field of $K$ of conductor $\mathfrak{p}^n\mathfrak{c}$. 
Denote by $\bar\lambda$ the complex conjugate character 
of $\lambda$ defined by $x\mapsto\lambda(\bar{x})$, where $x\mapsto \bar x$ is the complex conjugation on $K$. 
Then $\bar\lambda$ has infinity type $(0,1)$ and the $p$-adic avatar 
of $\lambda\bar\lambda$ is equal to the product $\chi_\lambda\cdot\chi_{\cyc,K}$ where  
$\chi_{\cyc,K}=\chi_\cyc\circ \rec_\Q \circ \mathbf{N}_{K/\Q}$ and $\chi_\lambda$ is a finite order character unramified at $p$. Let $\mathcal{O}^\times_\mathrm{free}$ denote the maximal $\Z_p$-free quotient of $\mathcal{O}$, and let $W\subseteq\mathcal{O}^\times_\mathrm{free}$ the subset topologically generated by the values of $\hat\lambda$. Write $p^s=[W:\Gamma]$ and let $\mathcal{O}[[S]]$ be the extension of $\Lambda$ defined by the relation $(1+S)^{p^s}=1+p$.
Enlarging $\I$ if necessary, we may assume that $\mathcal{O}[[S]]\subseteq\mathbb{I}$. 
Let $w$ be a topological generator of $W$ and $x\mapsto\langle \lambda(x)\rangle$ the composition of $\lambda$ with the projection to $\mathcal{O}^\times_\mathrm{free}$. Define the family of Hecke characters 
\[{\boldsymbol{\lambda}}\colon K^\times\backslash\widehat{K}^\times\longrightarrow\I^\times\]
by the formula 
$
{\boldsymbol{\lambda}}(x)=\hat{\lambda}(x)(1+S)^{l(x)}$, where $l(x)$ is defined by the equation $\langle\lambda(x)\rangle=w^{l(x)}$. Denote by $x \mapsto \hat{\lambda}(\bar{x})^{-1}=\lambda(\bar{x})^{-1}x_{\bar{\mathfrak{p}}}^{-1}$ (for $x \in \widehat{K}^{\times}$) the $p$-adic avatar of the Hecke character given by $x \mapsto \lambda(\bar{x})^{-1}$ (for $x \in \A^\times_K$) of infinity type $(0,-1)$; define
\[
{\boldsymbol{\lambda}}^{-1}(\bar{x})=\hat{\lambda}(\bar{x})^{-1}[\langle\hat{\lambda}(\bar{x})^{-1}\rangle^{1/2}]\] which 
we see as taking values in $\I^\times$.
Finally, define the character \begin{equation}\label{defXI}{\boldsymbol{\xi}}\colon K^\times\backslash\widehat{K}^\times\longrightarrow \I^\times\end{equation}   by
$
{\boldsymbol{\xi}}(x)={\boldsymbol{\lambda}}(x)\cdot{\boldsymbol{\lambda}}^{-1}(\bar x).$ Note that ${\boldsymbol{\xi}}_{|\widehat{\Q}^\times}$ is trivial and its prime-to-$p$ conductor is $c=\mathfrak{c}\bar{\mathfrak{c}}$. 

We now study the specializations of these characters at arithmetic weights. 
Let $\nu\colon\I\rightarrow\overline\Q_p$ be an arithmetic morphism of signature $(k_\nu ,\psi_\nu )$ 
and write $\hat{\lambda}_\nu =\nu \circ{\boldsymbol{\lambda}}$. 
Then, for $x \in \widehat{K}^\times$ and $k_\nu \equiv k_0\equiv 2\mod{2(p-1)}$, we have 
\[\hat{\lambda}_\nu (x)=\psi_\nu ^{1/2}(\langle \hat{\lambda}(x)\rangle)\cdot {\lambda}(x) ^{k_\nu /2} x_{\mathfrak{p}}^{k_\nu /2}.\]
Hence $\hat{\lambda}_\nu $ is the $p$-adic avatar 
of an algebraic Hecke character $\lambda_\nu $ of infinity type $(k_\nu /2,0)$. 
Also, set as above $\hat{\xi}_\nu =\nu \circ {\boldsymbol{\xi}}$. For any $x \in \widehat{K}^\times$ and $k_\nu \equiv k_0\equiv 2\mod{2(p-1)}$, we have 
\begin{equation}\label{hatxi}
\hat{\xi}_\nu (x)= \psi_\nu ^{1/2}(\langle{\lambda}(x
\bar{x}^{-1})x_\mathfrak{p} x^{-1}_{\bar{\mathfrak{p}}}\rangle)\cdot {\lambda}(x\bar{x}^{-1}) ^{k_\nu /2} \cdot x_{\mathfrak{p}}^{k_\nu /2} x_{\bar{\mathfrak{p}}}^{-k_\nu /2}.\end{equation}
Therefore, 
$\hat{\xi}_\nu $ is the $p$-adic avatar of an anticyclotomic Hecke character $\xi_\nu $ of infinity type $(k_\nu /2, -k_\nu /2)$. 

\subsection{Twist of big Galois representations}\label{subsec.TwistGaloisHecke}
We now consider the representation obtained by twisting $\mathbf{T}^\dagger$ by $\boldsymbol{\xi}$. Fix a continuous character 
$\boldsymbol{\xi}\colon K^\times\backslash\widehat{K}^\times\rightarrow\I^\times$ as in \eqref{defXI}, 
and denote by the same symbol the associated Galois character $\boldsymbol{\xi}\colon G_K\rightarrow\I^\times$.
Let $\mathbf{T}^\dagger_{|G_K}$ denote the restriction of $\mathbf{T}^\dagger$ to the subgroup $G_K\subseteq G_\Q$. 
Define the $G_K$-representation 
\[\mathbf{T}^\dagger_{\boldsymbol{\xi}}=
\mathbf{T}^\dagger_{|G_K}\otimes\boldsymbol{\xi}^{-1}.\] From \eqref{filtration} we obtain a filtration 
of $D_v\cong  G_{\Q_p}$-modules (recall that $p$ is split in $K$) 
\begin{equation*}\label{filtration1}
0\longrightarrow \mathbf{T}^{\dagger,+}_{\boldsymbol{\xi}}
\longrightarrow  \mathbf{T}^\dagger_{\boldsymbol{\xi}}\longrightarrow\mathbf{T}^{\dagger,-}_{\boldsymbol{\xi}}\longrightarrow 0 
\end{equation*} and as $G_{\Q_p}$-representations we have an isomorphism 
\[\mathbf{T}^\dagger_{\boldsymbol{\xi}}\cong \mat{\eta_v^{-1}\chi_\cyc\Theta\boldsymbol{\xi}^{-1}}{*}{0}{\eta_v\Theta^{-1}\boldsymbol{\xi}^{-1}}.\]
Define the Galois character $\Psi\colon G_K\rightarrow\I^\times$ by 
$\Psi=\eta_v^{-1}\chi_\cyc\Theta\boldsymbol{\xi}^{-1}$. 
\begin{lemma}\label{lemmaunram} $\Psi\colon G_{K_\mathfrak{p}}\rightarrow\I^\times$ is unramified. 
\end{lemma}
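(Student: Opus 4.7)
The plan is to reduce the statement to a character cancellation on the inertia subgroup and then verify this by Zariski density of arithmetic specializations of $\I$. First, since $\eta_v$ is unramified by construction (it is defined as a character of $D_v/I_v$), $\eta_v^{-1}$ is trivial on $I_{K_\mathfrak{p}}\subseteq I_v$, so it is enough to prove that $\chi_\cyc\Theta\cdot\boldsymbol{\xi}^{-1}$ is trivial on $I_{K_\mathfrak{p}}$.

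The algebra $\I$ is reduced and finite flat over $\Lambda$, and the arithmetic morphisms $\nu\colon\I\to\overline\Q_p$ with $k_\nu\equiv k_0\equiv 2\pmod{2(p-1)}$ form a Zariski dense subset of $\Spec\I$, so it suffices to show that $\nu\circ\Psi|_{I_{K_\mathfrak{p}}}=1$ for every such $\nu$. Fix such a $\nu$ and let $\sigma\in I_{K_\mathfrak{p}}$ correspond via $\rec_K$ to the id\`ele $y\in\widehat{K}^\times$ supported at $\mathfrak{p}$ with $y_\mathfrak{p}=u\in\mathcal{O}_{K_\mathfrak{p}}^\times$ and $y_{\bar\mathfrak{p}}=1$. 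Because $\lambda$ is unramified at both primes above $p$, the id\`ele $y\bar y^{-1}$ has unit components at $\mathfrak{p},\bar\mathfrak{p}$ and is trivial elsewhere, so $\lambda(y\bar y^{-1})=1$; formula \eqref{hatxi} then reduces to
$$\hat\xi_\nu(\sigma)=\psi_\nu^{1/2}(\langle u\rangle)\,u^{k_\nu/2}.$$
On the other hand, using the identification $\chi_\cyc(\sigma)=u$ (under the reciprocity convention employed throughout the paper) together with \eqref{chi_k}, one computes $(\nu\circ\Theta)(\sigma)=\vartheta_\nu(u)=\psi_\nu^{1/2}(\langle u\rangle)\,u^{(k_\nu-2)/2}$. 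Combining the four factors gives
$$\nu\circ\Psi(\sigma)=1\cdot u\cdot\psi_\nu^{1/2}(\langle u\rangle)\,u^{(k_\nu-2)/2}\cdot\psi_\nu^{-1/2}(\langle u\rangle)\,u^{-k_\nu/2}=u^{\,1+(k_\nu-2)/2-k_\nu/2}=1,$$
and density then yields $\Psi|_{I_{K_\mathfrak{p}}}=1$.

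The main obstacle is bookkeeping: the various conventions (the normalization of $\rec_K$ determining the sign of $\chi_\cyc$ on inertia, the precise form of the $p$-adic avatar $\hat\lambda$, and the matching of the $\psi_\nu^{1/2}$ factors appearing in both $\vartheta_\nu$ and $\hat\xi_\nu$) must be consistently aligned so that the exponents of $u$ collapse as above. Conceptually, the character $\boldsymbol{\xi}$ is \emph{designed} precisely so that this cancellation holds; once definitions are unwound, the unramifiedness of $\Psi$ at $\mathfrak{p}$ is essentially automatic. If one wished to avoid the density argument, an alternative approach would be to identify $\boldsymbol{\xi}|_{I_{K_\mathfrak{p}}}$ directly in $\I^\times$ by noting that $\boldsymbol{\lambda}^{-1}(\bar u)=1$ (since $\hat\lambda(\bar u)=1$ as $\bar u$ is a unit at $\bar\mathfrak{p}$ and $u_{\bar\mathfrak{p}}=1$) and $\boldsymbol{\lambda}(u)=u\cdot(1+S)^{l(u)}$; one then verifies that $u\cdot(1+S)^{l(u)}$ equals $\chi_\cyc(\sigma_u)\Theta(\sigma_u)=u\cdot[\langle u\rangle^{1/2}]$ through the relation $(1+S)^{p^s}=1+p$ and the defining equation $w^{l(u)}=\langle u\rangle$.
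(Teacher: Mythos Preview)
Your argument is correct, but it proceeds by a genuinely different route from the paper. The paper argues directly at the level of $\I^\times$-valued characters: it uses the relation $\hat{\lambda}\hat{\bar{\lambda}}=\chi_\cyc\beta$ (with $\beta$ finite order, unramified at $\mathfrak{p}$) to rewrite $\Psi=\eta_v^{-1}\chi_\cyc\Theta\boldsymbol{\xi}^{-1}$ as $\eta_v^{-1}\beta^{-1}\hat{\bar{\lambda}}^2[\langle\beta\rangle^{-1/2}][\langle\hat{\bar{\lambda}}\rangle]$, and then simply observes that each factor is unramified at $\mathfrak{p}$ (the first because $\eta_v$ is unramified by definition, the rest because $\hat{\bar{\lambda}}$ and $\beta$ are unramified at $\mathfrak{p}$ as $p$-adic avatars of Hecke characters). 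No specialization or density is invoked.

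Your approach instead reduces to arithmetic specializations via Zariski density and verifies the cancellation of exponents of $u$ at each $\nu$ using the explicit formulas for $\vartheta_\nu$ and $\hat\xi_\nu$. This is more computational but has the virtue of making the cancellation mechanism completely explicit; the paper's route is shorter and avoids the need to check that $\I$ is reduced and that the relevant primes are dense. Your alternative sketch at the end (computing $\boldsymbol{\xi}|_{I_{K_\mathfrak{p}}}$ directly in $\I^\times$) is closer in spirit to the paper, though the paper organizes the direct computation around $\hat{\bar{\lambda}}$ rather than $\boldsymbol{\lambda}$. One caveat: your assertion $\chi_\cyc(\sigma)=u$ requires the arithmetic normalization of local reciprocity, whereas the paper explicitly uses the geometric one; under the geometric convention one has $\chi_\cyc(\sigma)=u^{-1}$, and the corresponding translation of the Hecke character $\boldsymbol{\xi}$ to a Galois character must be adjusted consistently. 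You flag this bookkeeping yourself, and with a consistent choice the exponents do cancel, so this is not a gap but a point where the write-up should be tightened.
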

\begin{proof} Since $\lambda$ has infinity type $(1,0)$ and it is unramified at $p$, we have  $\hat{\lambda}\hat{\bar{\lambda}}=\chi_{\cyc} \beta$, with $\hat{\bar{\lambda}}\colon  x \mapsto \hat{\lambda}(\bar{x})$ and $\beta$ a character of finite order and unramified at $\mathfrak{p}$. 
Since $k \equiv 2 \mod{2(p-1)}$, a simple computation shows that 
$\eta_v^{-1} \chi_\cyc \Theta \boldsymbol{\xi}^{-1} = \eta_v^{-1} \beta^{-1}\hat{\bar{\lambda}}^2 [\langle \beta\rangle^{-1/2}] [\langle \hat{\bar{\lambda}}\rangle]$,
and the result follows because 
$\eta_v^{-1}$ is unramified as a $G_{K_\mathfrak{p}}$-character and $\beta^{-1}\hat{\bar{\lambda}}^2 [\langle \beta\rangle^{-1/2}] [\langle \hat{\bar{\lambda}}\rangle]$ is unramified at $\mathfrak{p}$ seen as the $p$-adic avatar  of a Hecke character.
\end{proof}


For each arithmetic morphism $\nu\colon \I\rightarrow\mathcal{O}_\nu$, define $\mathbf{T}^\dagger_{\xi_\nu}\defeq\mathbf{T}^\dagger_{\boldsymbol{\xi}}\otimes_{\I,\nu}F_{\nu}$. We have then an exact sequence of $G_{\Q_p}$-modules 
\[0\longrightarrow \mathbf{T}^{\dagger,+}_{\xi_\nu}\longrightarrow \mathbf{T}^\dagger_{\xi_\nu}\longrightarrow \mathbf{T}^{\dagger,-}_{\xi_\nu}\longrightarrow 0\] where $\mathbf{T}^{\dagger,\pm}_{\xi_\nu}=\mathbf{T}^{\dagger,\pm}_{\boldsymbol{\xi}}\otimes_{\I,\nu}F_\nu$  
are $F_\nu$-vector spaces of dimension one. 

We finally introduce further twists by Hodge--Tate characters. 
Fix a prime $\mathfrak{P}$ of $\overline\Q$ over $\mathfrak{p}$. Denote by
$F=H_{c,\mathfrak{P}}$ the completion of ${H}_{c}$ at $\mathfrak{P}$, $F_\infty=\Q_p^\unr$ the maximal unramified extension of $\Q_p$ (which contains $F$ and is also the maximal unramified extension of $F$ because $p\nmid c$)  
and $L_\infty={H}_{cp^\infty,\mathfrak{P}}$, the completion of ${H}_{cp^\infty}$ at $\mathfrak{P}$. Recall that $L_\infty=F(\mathfrak{F})$ is obtained by adjoining the torsion points of the relative Lubin--Tate formal group 
$\mathfrak{F}$ of parameter $\pi/\bar\pi$, where if $s$ is the order of $\p$ in $\Pic(\mathcal{O}_c)$, then $\p^s=(\pi)$ with $\pi\in\mathcal{O}_c$ (see \cite[Proposition 8.3]{Shnidman} for the proof; see also \cite[page 604]{CH}). 
Let $K_\infty=L_\infty(\mu_{p^\infty})$ and define 
$\mathcal{G}=\Gal(K_\infty/F),$
$\Gamma_\infty=\Gal(L_\infty/F),$
$\Gamma_\cyc=\Gal(F(\mu_{p^\infty})/F).$ 
We also note that if we let $\widetilde{H}_{cp^n}=H_{cp^n}(\mu_{p^n})$ and $\widetilde{H}_{cp^\infty}=\bigcup_{n\geq 1}\widetilde{H}_{cp^n}$, then $K_\infty=\widetilde{H}_{cp^\infty,\mathfrak{P}}$ is the completion of 
$\widetilde{H}_{cp^\infty}$ at $\mathfrak{P}$. 
We thus have the following diagram of local fields: 
\begin{equation}\label{fieldext}
\xymatrix{
&K_\infty\ar@{-}[d]\ar@/^2.0pc/@{-}[dd]^-{\mathcal{G}}\\ 
F(\mu_{p^\infty})\ar@{-}[dr]_-{\Gamma_\cyc}\ar@{-}[ur]& L_\infty\ar@{-}[d]_-{\Gamma_\infty} \\ 
& F
}
\end{equation}For any finite extension $L$ of $F$ in $L_\infty$, and any $\mathcal{G}$-stable subquotient $\mathbf{M}$ of 
$\mathbf{T}^\dagger_{\boldsymbol{\xi}}$, define 
\[H^1_\mathrm{Iw}(L_\infty/L,\mathbf{M})=
H^1_\mathrm{Iw}\left(\Gal(L_\infty/L),\mathbf{M}\right)=\invlim_{L'} H^1(L',\mathbf{M})\] 
where $L'$ runs over the finite extensions of $L$ contained in $L_\infty$.

Let $(\nu,\phi)$ be a pair consisting of an arithmetic morphism
$\nu\colon \I\rightarrow\mathcal{O}_\nu$ 
and a Hodge--Tate character $\phi\colon \mathcal{G}\rightarrow\overline\Q_p^\times$ of Hodge--Tate weight $m\in\Z$; 
we adopt the convention that the Hodge--Tate weight of the cyclotomic character $\chi_\cyc\colon G_{\overline\Q_p}\rightarrow\Z_p^\times$ 
is $+1$, so $\phi=\chi_\cyc^m\varphi$ for some unramified character $\varphi\colon \mathcal{G}\rightarrow\overline\Q_p^\times$.
For $\bullet$ being $+$, $-$ or no symbol, let 
$\mathbf{T}^{\dagger,\bullet}_{{\xi}_\nu}(\phi)$ denote the twist of the representation 
$\mathbf{T}^{\dagger,\bullet}_{{\xi}_\nu}$ by  
 $\phi$. We then have 
specialization maps 
\[\mathrm{sp}_{\nu,\phi}\colon H^1_\mathrm{Iw}(\Gamma_\infty,\mathbf{T}^{\dagger,\bullet}_{\boldsymbol{\xi}})
\overset\sim\longrightarrow H^1(F,\mathbf{T}^{\dagger,\bullet}_{\boldsymbol{\xi}}
\widehat{\otimes}_{\I}\I[[\mathcal{G}]])
\longrightarrow H^1(F,\mathbf{T}_{\xi_\nu}^{\dagger,\bullet}(\phi)),\]
where the first map is induced by Shapiro's isomorphism.

\subsection{Eichler--Shimura cohomology}
Define the \emph{$p$-adic Eichler Shimura group} to be the $G_\Q=\Gal(\overline{\Q}/\Q)$-representation:  
\[\ES_{\Z_p}=\invlim_m H^1_\text{\'et}(\overline{X}_m,\Z_p)\]
where $\overline{X}_m=X_m\otimes_\Q\overline{\Q}$ and the inverse limit is taken with respect to the canonical projection maps $X_{m+1}\rightarrow X_{m}$ for $m\geq 1$. 
If $\mathcal{O}$ is the valuation ring of a
complete subfield $K\subseteq\C_p$, denote $\mathrm{ES}_\mathcal{O}=\mathrm{ES}\widehat{\otimes}_{\Z_p}\mathcal{O}.$
Let $T_\ell^*$ for $\ell\nmid Np$ and $U_\ell^*$ for $\ell\mid N^+p$ be the standard Hecke operators 
acting by correspondences on $H^1_{\et}(\overline{X}_m,\Z_p)$; we also denote $\langle a\rangle^*$ the diamond operator; recall that 
the relation with operators acting on modular forms is $T=\tau_m T^*\tau_m^{-1}$, where $\tau_m$ is the Atkin--Lehner involution. 
These actions are compatible with respect to the projection maps, and therefore we obtain actions on $\ES_\mathcal{O}$. Let $\mathfrak{h}_m^*\subseteq \End(H^1_{\et}(\overline{X}_m,\Z_p))$ be the subalgebra generated by these operators, and put $\mathfrak{h}_\infty^*=\invlim\mathfrak{h}_m^*$. Then $\ES_\mathcal{O}$ has a natural structure of $\mathfrak{h}_\infty^*$-module. 
Let $\widetilde\Lambda_\mathcal{O}=\mathcal{O}[[\Z_p^\times]]$. Observe that diamond operators equip $\mathfrak{h}_\infty^*$ with a canonical structure of $\widetilde\Lambda_\mathcal{O}$-algebra; 
therefore, $\ES_\mathcal{O}$ is also a $\widetilde{\Lambda}_\mathcal{O}$-module. 
Let $e^{\ord,*}$ denote the Hida ordinary idempotent associated to $U_p^*$ and define $\mathfrak{h}_\infty^{\ord,*}=e^{\ord,*}\mathfrak{h}_\infty^*$. We also set 
$\ES_\mathcal{O}^\ord=e^{\ord,*}\ES_\mathcal{O}.$
Then $\ES_\mathcal{O}^\ord$ is a $\mathfrak{h}_\infty^{\ord,*}$ and also a $\widetilde{\Lambda}_\mathcal{O}$-module (and the two structures are compatible with the 
$\widetilde{\Lambda}_\mathcal{O}$-module structure of $\mathfrak{h}_\infty^{\ord,*}\otimes_{\Z_p}\mathcal{O}$). 
We can write $\widetilde{\Lambda}_\mathcal{O}\simeq \Lambda_\mathcal{O}\times \mathcal{O}[(\Z/p\Z)^\times]$ with $\Lambda_\mathcal{O}\simeq \mathcal{O}[[T]]$. 
If $\omega$ is the Teichmüller character, we can consider for each $0\le i\le p-1$ the $\omega^i$-eigenspace $\ES^\ord_\mathcal{O}(i)$ of $\ES_\mathcal{O}^\ord$. 

Recall the primitive branch $\I/\Lambda_\mathcal{O}$ fixed before. Let $\mathfrak{O}$ the valuation ring of a complete subfield $F$ of $\overline{\Q}_p$ which contains all the $p$-power roots of unity and the ring $\Z_p^\unr=W(\overline{\F}_p)$ of Witt vectors of the algebraic closure of the field with $p$-elements $\F_p$. Then $\mathcal{O}\subseteq\mathfrak{O}$ and we put 
$\mathbb{J}=\mathbb{I}\otimes_{\mathcal{O}}\mathfrak{O}$; 
using the isomorphism $\mathfrak{h}_\infty^\ord\simeq\mathfrak{h}_\infty^{\ord,*}$ we may define the $\Lambda_\mathcal{O}$-algebras $\I^*\simeq\I$ and $\J^*\simeq\J$.  
Define \[\ES_\J=\ES^\ord_{\mathcal{O}}(0)_{\Z_p}\otimes_{\mathfrak{h}_\infty^{\ord,*}}\mathbb{J}^*.\] Then the $G_\Q$-representation $\ES_\J$ is a free $\J$-module of rank $2$, equipped with a split filtration 
\[0\longrightarrow \mathfrak{A}_\J^*\longrightarrow\ES_\J\longrightarrow \mathfrak{B}_\J^*\longrightarrow 0\]
with free $\J$-modules $\mathfrak{A}_\J^*$ and $\mathfrak{B}_\J^*$ of rank $1$, satisfying the following properties. For $\bullet$ being $+$, $-$, no symbol; or any of these three paired with $\dagger$,
define $\widetilde{\mathbf{T}}^\bullet=\mathbf{T}^\bullet\otimes_\I\J$. We have 
\[\ES_\J(\Theta^2\chi_\cyc)\simeq\widetilde{\mathbf{T}}\] as $\J[G_\Q]$-modules,
and an operator $T$ on $\widetilde{\mathbf{T}}$ correspond to the operator $T^*$ on $\ES_\J$; note that this isomorphism is \emph{canonical}, as it 
comes from the canonical isomorphism between \'etale cohomology groups and the dual of Tate modules. 
Then, we have 
$\mathfrak{A}_\J^*=(\ES_\J)^{I_{\Q_p}},$ and  
\[\mathfrak{A}_\J^*(\Theta^2\chi_\cyc)\simeq \widetilde{\mathbf{T}}^-\] 
as $\J[G_\Q]$-modules, also canonically. 
Furthermore, $\sigma\in I_{\Q_p}$ acts on $\mathfrak{B}_{\J}^{*}$ as $\Theta^{-2}\chi_\cyc^{-1}$ and  
\[\mathfrak{B}_\J^*(\Theta^2\chi_\cyc)\simeq  \widetilde{\mathbf{T}}^+\] as $\J[G_\Q]$-modules, again canonically. Finally, we have a canonical perfect and Galois-equivariant pairing (coming from the Poincaré duality) 
\[\mathfrak{A}^*_\J\times \mathfrak{B}_\J^*\longrightarrow \J\]
of free $\J$-modules of rank $1$. Define \[\mathbf{D}(\mathbf{T}^+)=(\mathbf{T}^+\widehat{\otimes}_{\Z_p}\Z_p^\unr)^{G_{\Q_p}},\] which 
by \cite[Lemma 3.3]{Ochiai-Coleman}, is a free $\I$-module of rank $1$ (recall that $\mathbf{T}^+$ is unramified). Furthermore, since $\widetilde{\mathbf{T}}^+\simeq (\mathbf{T}^+\otimes_{\Z_p}\Z_p^\unr)\otimes_{\Z_p^\unr}\mathfrak{O}$, we have $\mathbf{D}(\mathbf{T}^+)=(\widetilde{\mathbf{T}}^+)^{G_{\Q_p}}$. Fix an $\I$-basis $\omega_\I$ for $\mathbf{D}(\mathbf{T}^+)$ and denote by the same symbol the correspondent $\I$-basis of $(\widetilde{\mathbf{T}}^+)^{G_{\Q_p}}$ under the isomorphism; under inclusion $(\widetilde{\mathbf{T}}^+)^{G_{\Q_p}}\subseteq \widetilde{\mathbf{T}}^+$, combined with the isomorphism $\widetilde{\mathbf{T}}^+\simeq\mathfrak{B}_\J^*(\chi_\cyc\Theta^2)$, $\omega_\I$ gives 
a $\J$-generator of $\mathfrak{B}_\J^*$ as $\J$-module, denoted by the same symbol.

We now review some of the results in \cite{LMW}.
Let $\Gamma_m$ denote the subgroup of elements of $R_m^\times$ of norm $1$ which are congruent to $\smallmat 1*01$ modulo $\ell^{\ord_\ell(N^+p^m)}$ for all primes $\ell\nmid N^-$. Then as Riemann surfaces we have $X_m(\C)\simeq\Gamma_m\backslash\mathcal{H}$, where $\Gamma_m$ acts on the complex upper half plane $\mathcal{H}$ via fractional linear transformations through the fixed isomorphism $i_\infty\colon B_\infty\simeq \mathrm{M}_2(\R)$. For any field $C$ containing $\Q$, define 
\[S_2(\Gamma_m,C)=H^0(X_{m/C},\Omega^1)\]
where $X_{m/C}=X_m\otimes_\Q C$. 
The module $\mathfrak{B}_\J^*$ injects into the inverse limit $\invlim S_2(\Gamma_m,K)$ of quaternionic modular forms of level $\Gamma_m$ with coefficients in $K$, where the inverse limit is computed with respect to the trace maps $\mathrm{Tr}_m\colon S_k(\Gamma_m,F)\rightarrow S_k(\Gamma_{m-1},F)$ for $m\geq 2$. 
The image is contained in the $\mathcal{O}$-submodule consisting of those $(\mathcal{F}_m)_{m\geq1}$ such that 
$\mathcal{F}_m|\tau_m$ and $\mathcal{F}_m|\tau_m|U_p^m$ have Serre--Tate expansions in $\mathfrak{O}[[T_x]]$ for any point $x$ in the Igusa tower over $\mathbb{X}_0^\ord$, the ordinary locus of the special fiber $\mathbb{X}_0$ of the model $\mathcal{X}_0$ of the Shimura curve $X_0$ over $\Z_p$; here $\tau_m$ is the Atkin--Lehner involution. To each such sequence of modular forms $(\mathcal{F}_m)_{m\geq 1}$ we can attach a power series in $\J[[T_x]]$.
Then $\omega_\I$ induces a family of modular forms $\boldsymbol{\mathcal{F}}$ and therefore a power series $\boldsymbol{\mathcal{F}}(T_x)=\sum_{n\geq 1}a_nT_x^n\in\widetilde{\I}[[T_x]]$, with $\widetilde{\I}\defeq \I\otimes_{\Z}\Z^{\unr}$ (see \cite[Lemma 5.9]{LMW}). For each arithmetic morphism $\nu\colon\mathbb{I}\rightarrow \mathcal{O}_\nu$, 
by specialization we obtain a power series $\mathcal{F}_\nu(T_x)=\sum_{n\geq 1}a_{n,\nu}T_x^n$ in $\mathcal{O}_\nu^\unr[[T_x]]$, $a_{n,\nu}=\nu(a_n)$ and $\mathcal{O}_\nu^\unr$ is the maximal unramified extension of $\mathcal{O}_\nu$. 
Then $\mathcal{F}_\nu(T_x)$ is the $T_x$-expansion of a unique modular form $\mathcal{F}_\nu\in S_k(\Gamma_m,F_\nu)$ 
(so, in fact $\mathcal{F}_\nu\in S^\ST_k(\Gamma_m,\mathcal{O}^\unr_\nu)$).  
This fixes a choice of Jacquet--Langlands lift of the elliptic modular form $f_\nu$ corresponding to $\nu$; this choice depends on the choice of $\omega_\I$ only. Applying this construction to an arithmetic morphism $\nu$ of signature $(2,\psi)$ such that the conductor of $\psi$ is $p^m$, we thus obtain from $\omega_\I$ a modular form $\mathcal{F}_\nu\in S_2(\Gamma_m,F_\nu)$ whose Serre--Tate expansion $\mathcal{F}_\nu(T_x)$ is just the specialization of $\boldsymbol{\mathcal{F}}(T_x)$ at $\nu$, and an element 
\begin{equation}\label{defomeganu}
\omega_{\mathcal{F}_\nu}\in \mathrm{Fil}^1(\mathbf{D}_{\mathrm{dR}}(\widetilde{\mathbf{T}}_\nu^*)).\end{equation}

We then have a commutative diagram
\begin{equation}\label{diagramdeRham}
\begin{tikzcd}
\mathbf{D}(\mathbf{T}^+)\arrow{r}{\omega_\I}\arrow{d}{\mathrm{sp}_\nu}&{\I}\arrow{d}{\nu} \\ \mathbf{D}_{\mathrm{dR}}(\mathbf{T}^+_\nu)\arrow{r}{\omega_{\mathcal{F}_\nu}}& F_\nu
\end{tikzcd}
\end{equation}
where the bottom horizontal arrow is given by pairing with the differential 
$
\omega_{\mathcal{F}_\nu}\in \mathrm{Fil}^1(\mathbf{D}_\mathrm{dR}({\mathbf{T}}_\nu^*))$ 
under the isomorphisms
\[\mathbf{D}_{\mathrm{dR}}({\mathbf{T}}^+_{\nu})\simeq \frac{\mathbf{D}_\mathrm{dR}({\mathbf{T}}_\nu)}{\mathrm{Fil}^0(\mathbf{D}_\mathrm{dR}({\mathbf{T}}_\nu))}\simeq
\mathrm{Fil}^1(\mathbf{D}_{\mathrm{dR}}({\mathbf{T}}_\nu^*))^\vee\]
where the first isomorphism is \cite[Lemma 3.2]{Ochiai-Coleman}, and the second is 
given by the de Rham pairing \[\langle\cdot,\cdot\rangle_\mathrm{dR}:\mathbf{D}_\mathrm{dR}({\mathbf{T}}_\nu)\times
\mathbf{D}_\mathrm{dR}({\mathbf{T}}_\nu^*)\longrightarrow K
\] so that 
$\omega_{\mathcal{F}_\nu}(\mathrm{sp}_\nu(x))=\langle \mathrm{sp}_\nu(x),\omega_{\mathcal{F}_\nu}\rangle_\mathrm{dR}$. 

Let $\nu$ be an arithmetic morphism of signature $(2,\psi)$. Under the comparison isomorphisms 
$H^1(X_{m/K},K)\simeq H^1_\mathrm{dR}(X/K)$ and $H^1_\mathrm{dR}(X/K)[\mathcal{F}_\nu]\simeq\mathbf{D}_\mathrm{dR}(\mathbf{T}_\nu)$ 
(where $H^1_\mathrm{dR}(X/K)[\mathcal{F}_\nu]$ is the subspace of $H^1_\mathrm{dR}(X/K)$ where Hecke operators act 
with the same eigenvalues as $\mathcal{F}_\nu$) we see that the differential for $\mathcal{F}_\nu\in H^0(X_{m/K},\Omega^1)$ corresponds to the differential $\omega_{\mathcal{F}_\nu}$ in \eqref{defomeganu}.

\begin{remark} For each $\nu$, \eqref{defomeganu} specifies a choice of generators of 
$\mathbf{D}_\mathrm{dR}(\mathbf{T}_\nu^+)_K$; this is a \emph{non-canonical} choice, because 
it depends (up to multiplication by elements in $\I^\times$) on the choice of $\omega_\I$. 
The relation between $\mathcal{F}_\nu$ and $\omega_{\mathcal{F}_\nu}$ is $\omega_{\mathcal{F}_\nu}(T_x)=\mathcal{F}_\nu(T_x)dT_x$, where $\mathcal{F}_\nu(T_x)$ 
is the Serre--Tate expansion of $\mathcal{F}_\nu$ at $x$.  
\end{remark}

\begin{remark}
At the moment, there is no analogue of the relation between $\mathcal{F}_\nu$ and $\omega_{\mathcal{F}_\nu}$ for primes $\nu$ of signature $(k,\psi)$ with $k\neq 2$. The reason is that the Galois representation $\mathbf{T}_\nu$ does not arise as projection from $\mathbf{T}$ to any of the cohomology groups $H^1(\Gamma_m,K)$, but instead is constructed using Kuga--Sato varieties. In the $\GL_2$-case, nevertheless, a beautiful argument using Beilinson--Kato elements shows that such a relation holds even when the weight of $\nu$ is different from $2$: see \cite[Theorem 10.1]{KLZ.ERL}.
\end{remark}

The next goal is to twist \eqref{diagramdeRham} by $\boldsymbol{\xi}$ and 
Hodge--Tate characters $\phi$. We first recall some notation and definitions from \cite[Definition 3.12]{Ochiai-Coleman} and \cite[\S3.2]{Castella}. Fix a compatible sequence $(\zeta_{p^n})_{n\geq1}$ of $p$-power roots of unity; so for each integer $n\geq 1$, $\zeta_{p^n}$ is a primitive $p^n$-th root of unity such that $\zeta_{p^{n+1}}^p=\zeta_{p^{n}}$. This choice defines a generator of $\Q_p(j)$, denoted $e_j$. Let $t$ denote Fontaine's $p$-adic analogue of $2\pi i$, defined, \emph{e.g.} in \cite[Ch. II, \S1.1.15]{Kato1}. Then $\delta_r=t^{-r}\otimes e_r$ is a generator of the $1$-dimensional $\Q_p$-vector space 
$\mathbf{D}_{\dR}\left(\Q_p(\chi_\cyc^{r})\right)$; where for a character $\varphi$ we let $\Q_p(\varphi)$ denote the one-dimensional representation affording $\varphi$. 
Next, recall the Galois group $\mathcal{G}=\Gal(K_\infty/F)$ introduced in\S\ref{subsec.TwistGaloisHecke} and let $\phi\colon \mathcal{G}\rightarrow\overline\Q_p^\times$ be the $p$-adic avatar of a Hecke character of infinity type $(r,-r)$ for an integer $r\in\Z$. 
Then $\phi\chi_\cyc^{-r}$ is unramified at $\p$. Fix a basis $\omega_{\phi\chi_\cyc^{-r}}$ of the $1$-dimensional $\Q_p(\phi\chi_\cyc^{-r})$-vector space 
$\mathbf{D}_{\dR,F}(\Q_p(\phi\chi_\cyc^{-r}))$. 
{One defines a map 
$\varphi_\phi\colon \Z_p[[\mathcal{G}]]\rightarrow  \mathbf{D}_{\dR,F}\left(\Q_p(\phi\chi_\cyc^{-r}))\right)$  
setting $\varphi_\phi(\sigma)=(\phi\chi_\cyc^{-r})(\sigma)\omega_{\phi\chi_\cyc^{-r}}$ on group-like elements.
Define as before the $\I$-module 
\[\mathbf{D}(\mathbf{T}_{\boldsymbol{\xi}}^{\dagger,+})=(\mathbf{T}_{\boldsymbol{\xi}}^{\dagger,+}\widehat\otimes_{\Z_p}\widehat{\Z}_p^\unr)^{G_{\Q_p}}.\] 
By Lemma \ref{lemmaunram}, $\mathbf{T}_{\boldsymbol{\xi}}^{\dagger,+}$ is unramified, 
therefore $\mathbf{D}(\mathbf{T}_{\boldsymbol{\xi}}^{\dagger,+})$ is a free $\I$-module of rank $1$.  
We construct a map 
\begin{equation}\label{spec}
\mathrm{sp}_{\nu,\phi}\colon \mathbf{D} (\mathbf{T}^{\dagger,+}_{\boldsymbol{\xi}} )\widehat\otimes_{\Z_p}\mathcal{O}_F[[{\mathcal{G}}]]\longrightarrow\mathbf{D}_{\dR,F} ( \mathbf{T}^{\dagger,+}_{\xi_\nu}(\phi) )\end{equation}
setting $\mathrm{sp}_{\nu,\phi}=\mathrm{sp}_{\nu}\otimes \varphi_\phi\otimes \delta_r$ and using the canonical map 
\[  
\mathbf{D}_{\dR,F}(\mathbf{T}^{\dagger,+}_{\xi_\nu})\otimes_{F_\nu}\mathbf{D}_{\dR}(F_{\nu,\phi}(\phi\chi_\cyc^{-r})))\otimes_{F_{\nu,\phi}}\mathbf{D}_{\dR,F}(F_{\nu,\phi}(\chi_\cyc^{r})))
\longrightarrow \mathbf{D}_{\dR,F} ( \mathbf{T}^{\dagger,+}_{\xi_\nu}(\phi) ).
\]
From 
\eqref{diagramdeRham} we obtain a commutative diagram:
\begin{equation}\label{lemmaFaltings}
\xymatrix{\mathbf{D}({\mathbf{T}}^{\dagger,+}_{\boldsymbol{\xi}})\widehat\otimes_{\Z_p}\mathcal{O}_F[[{\mathcal{G}}]]
\ar[r]^-{\omega_\I\otimes1} \ar[d]^-{\mathrm{sp}_{\nu,\phi}}& \I\widehat\otimes_{\Z_p}\mathcal{O}_F[[{\mathcal{G}}]]\ar[d]^-{\mathrm{sp}_{\nu,\phi}}\\
\mathbf{D}_{\dR,F}({\mathbf{T}}_{\xi_\nu}^{\dagger,+}(\phi))\ar[r]^-{{\omega}_{\mathcal{F}_\nu}\otimes\phi^{-1}} &F_\nu(\phi)
}
\end{equation}
where $F_\nu(\phi)$ is the field generated over $F_\nu$ by the values of $\phi$ and, as before, the bottom horizontal map is given by the de Rham pairing \[\langle\cdot,\cdot\rangle_\mathrm{dR}:\mathbf{D}_{\mathrm{dR},F}({\mathbf{T}}_{\xi_\nu}(\phi))\times
\mathbf{D}_{\mathrm{dR},F}({\mathbf{T}}_{\xi_\nu}^*(\phi^{-1}))\longrightarrow F_\nu(\phi)
\] so that 
$(\omega_{\mathcal{F}_\nu}\otimes\phi^{-1})(\mathrm{sp}_\nu(x))=\langle \mathrm{sp}_\nu(x),\omega_{\mathcal{F}_\nu}\otimes\phi^{-1}\rangle_\mathrm{dR}$. }

\subsection{The big Perrin-Riou map}
Recall from \S\ref{subsec.TwistGaloisHecke} the Galois character $\Psi\colon G_{K_\p}\rightarrow\I^\times$ given by 
$\Psi=\eta_v^{-1}\chi_\cyc\Theta\boldsymbol{\xi}^{-1}$. For $\p$ the prime of $K$ above $p$ corresponding to the fixed embedding $\overline{\Q}\hookrightarrow\overline{\Q}_p$, denote by $i_\mathfrak{p}\colon \Z_p^\times\rightarrow\widehat{K}^\times$ the map which takes $z\in \Z_p^\times \cong \mathcal{O}_{K,\mathfrak{p}}^\times$ to the element $i_\mathfrak{p}(z)$ with  $\mathfrak{p}$-component equal to $z$ and trivial components at all the other places. Let $\Frob_\p\in G_K$ be an arithmetic Frobenius at $\p$; for 
each arithmetic character $\nu\colon \I\rightarrow\overline\Q_p$, let $\Psi_\nu=\nu\circ\Psi$. Since 
$k\equiv 2\mod{2(p-1)}$ 
and, identifying Galois and adelic character when convenient,  
$\chi_\cyc(i_\p(p))=\mathbf{N}_K(i_\mathfrak{p}(p)^{-1})(i_\mathfrak{p}(p)^{-1})$, a simple computation shows that 
$\Psi(\Frob_\p)=\mathbf{a}_p^{-1}\boldsymbol{\xi}(i_\p(p))$, 
and therefore
$\Psi_\nu(\Frob_\p)=\mathbf{a}_{p,\nu}^{-1}{\xi}_{\nu, \mathfrak{p}}(p)p$.

Set $\mathfrak{j}=\Psi(\Frob_\p)-1\in\I$. 
Define $\mathcal{J}=(\mathfrak{j},\gamma_\cyc-1)$ to be the ideal of $\I$ generated by $\mathfrak{j}$ and $\gamma_\cyc-1$, where $\gamma_\cyc$ is a fixed topological generator of $\Gamma_\cyc$. An arithmetic morphism $\nu\colon \I\rightarrow\overline\Q_p$ is \emph{exceptional} if its signature is 
$(2,\mathbf{1})$, where $\mathbf{1}$ is the trivial character, and $\mathfrak{j}=0$.  

Let $\phi\colon \mathcal{G}\rightarrow\overline\Q_p^\times$ be a character
of Hodge--Tate type with Hodge--Tate weight $w$ and conductor $p^n$ for some integer $n\geq 0$. Write $\phi=\chi_\cyc^w\phi'$ for some unramified character $\phi'$. 
For each $\nu$ 
we may consider the $1$-dimensional (over $F_\nu$) representation $V(\Psi_\nu)=F_\nu(\Psi_\nu)$ and its crystalline 
Dieudonn\'e module $\mathbf{D}_\mathrm{cris}(V(\Psi_\nu))$. Then the crystalline Frobenius acts on  
$\mathbf{D}_{\mathrm{cris}}(V(\Psi_\nu))$ by $\Phi_\nu=\Psi_\nu^{-1}(\Frob_\p)$ (\cite[Lemma 8.3.3]{BrCo}). 
Define
$\mathcal{E}_p(\phi,\nu)$ by
\[\mathcal{E}_p(\phi,\nu)=\begin{cases}
\displaystyle{\frac{1-p^w\phi'(\Frob_\p)\Phi_\nu}{1-(p^{w+1}\phi'(\Frob_\p)
\Phi_\nu)^{-1}}},&\text{if }n=0,\\
\epsilon(\phi^{-1})\cdot \Phi_\nu^n,& \text{if }n\geq 1,
\end{cases}\]
where, for any character $\varphi\colon \Gal(\Q_p^\mathrm{ab}/\Q_p)\rightarrow\overline\Q_p^\times$ of Hodge--Tate type, 
$\epsilon(\varphi)$ is the $\epsilon$-factor of the Weil--Deligne representation 
$\mathbf{D}_\mathrm{pst}(\varphi)$; we adopt the convention in \cite[\S2.8]{LZ-IwasawaZ2}
for $\epsilon$-factors, and we refer to \emph{loc. cit.} for a careful discussion. 

If $w\leq -1$, then
the finite Bloch--Kato subspace $H^1_f(F,\mathbf{T}_{\xi_\nu}^{\dagger,+}(\phi^{-1}))$ of  
$H^1(F,\mathbf{T}_{\xi_\nu}^{\dagger,+}(\phi^{-1}))$ coincides with $H^1(F,\mathbf{T}_{\xi_\nu}^{\dagger,+}(\phi^{-1}))$; 
the Bloch--Kato logarithm  for $V_{\nu,\phi}^\dagger$ gives rise to a map
\[\log\colon H^1 (F,\mathbf{T}_{\xi_\nu}^{\dagger,+}(\phi^{-1}) )\longrightarrow 
\mathbf{D}_{\mathrm{dR}} (\mathbf{T}_{\xi_\nu}^{\dagger,+}(\phi^{-1}) ).\]
By \cite[Theorem 3.7]{Castella}, there exists an injective $\I[[\mathcal{G}]]$-linear map 
\[\mathrm{Log}\colon H^1_\mathrm{Iw}
 (\Gamma_\infty,\mathbf{T}^{\dagger,+}_{\boldsymbol{\xi}} )\longrightarrow\mathfrak{j}^{-1}\cdot\mathcal{J}\cdot (\mathbf{D}(\mathbf{T}_{\boldsymbol{\xi}}^{\dagger,+})\widehat\otimes_{\Z_p}\widehat{\mathcal{O}}_{F_\infty}[[\mathcal{G}]] )\] where $\widehat{\mathcal{O}}_{F_\infty}$ is the completion of the valuation ring $\mathcal{O}_{F_\infty}$ of $F_\infty$, such that for each non-exceptional $\nu\colon \I\rightarrow\mathcal{O}_\nu$ and each non-trivial character $\phi\colon \mathcal{G}\rightarrow L^\times$ of Hodge--Tate type of conductor 
$p^n$ and Hodge--Tate weight $w\leq -1$ as above, the following diagram commutes: 
\begin{equation}\label{diaPR1}
\xymatrix{
H^1_\mathrm{Iw} (\Gamma_\infty,\mathbf{T}^{\dagger,+}_
{\boldsymbol{\xi}} )\ar[rrr]^-{\mathrm{Log}}\ar[d]^{\mathrm{sp}_{\nu,\phi^{-1}}}&&&\mathfrak{j}^{-1}\cdot\mathcal{J}\cdot (\mathbf{D}(\mathbf{T}_{\boldsymbol{\xi}}^{\dagger,+})\widehat\otimes_{\Z_p}\widehat{\mathcal{O}}_{F_\infty}[[\mathcal{G}]] )\ar[d]^{\mathrm{sp}_{\nu,\phi^{-1}}}\\
H^1 (F,\mathbf{T}_{\xi_\nu}^{\dagger,+}{(\phi^{-1})}) \ar[rrr]^{{\frac{(-1)^{-w-1}}{(-w-1)!}
\log\mathcal{E}_p(\phi,\nu)}}&&&\mathbf{D}_{\mathrm{dR}} (\mathbf{T}_{\xi_\nu}^{\dagger,+}{(\phi^{-1})}).
}
\end{equation}
Let $\widetilde{\I}[\mathfrak{j}^{-1}]= \I[\mathfrak{j}^{-1}]\widehat\otimes_{\Z_p}\Z_p^\unr$. Combining \eqref{diaPR1}  and \eqref{lemmaFaltings}, the argument in \cite[Proposition 5.2]{Castella} shows that 
there exists an injective $\I$-linear map 
\begin{equation}\label{PR}
\mathcal{L}_{\omega_{\I}}^{\Gamma_\infty}\colon 
H^1_\mathrm{Iw}(\Gamma_\infty,{\mathbf{T}}^{\dagger,+}_{\boldsymbol{\xi}})\longrightarrow \widetilde{\I}[\mathfrak{j}^{-1}][[\Gamma_\infty]] \end{equation}
with pseudo-null kernel and cokernel, such that for all characters $\phi\colon \Gamma_\infty\rightarrow\overline\Q_p^\times$ of Hodge--Tate type, with Hodge--Tate weight $w\leq -1$ and conductor $p^n$, all 
$\mathfrak{Y}\in H^1_\mathrm{Iw}(\Gamma_\infty,{\mathbf{T}}^{\dagger,+}_{\boldsymbol{\xi}})$ and all non-exceptional $\nu$ of weight $2$ we have 
\begin{equation}\label{prop12.2}
\begin{split}
\mathrm{sp}_{\nu,\phi^{-1}}\left(\mathcal{L}_{\omega_{\I}}^{\Gamma_\infty}(\mathfrak{Y})\right)&=\frac{(-1)^{-w-1}}{(-w-1)!}\cdot \mathcal{E}(\phi,\nu)\cdot
({\omega}_{\mathcal{F}_\nu}\otimes\phi)\left(\log(\mathrm{sp}_{\nu,\phi^{-1}}(\mathfrak{Y}))\right)
.\end{split}\end{equation}

\section{The algebraic $p$-adic $L$-function} \label{secBHP}


\subsection{CM points}\label{sec.CMpoints} 
Let $c$ be an integer coprime with $pND_K$ and for each integer $n\geq 0$ let 
$\mathcal{O}_{cp^n}=\Z+cp^n\mathcal{O}_K$ be the order of $K$ of conductor $cp^n$. Class field theory gives an isomorphism 
$\Pic(\mathcal{O}_{cp^n})\cong \Gal(H_{cp^n}/K)$ for an abelian extension $H_{cp^n}$ of $K$, called the \emph{ring class field of $K$ of conductor $cp^n$}. 
Define the union of these fields 
$H_{cp^\infty}=\bigcup_{n\geq 1} H_{cp^n}.$
Since $c$ is prime to $p$, $H_{c}\cap H_{p^\infty}=H$, where $H=H_1$ is the Hilbert class field of $K$, 
so we have an isomorphism of groups 
\[\Gal(H_{cp^\infty}/K)\cong  \Gal(H_c/K)\times\Gal(H_{p^\infty}/H).\]
Since $p$ is split in $K$, we have  $\Gal(H_{p^\infty}/H)\cong \Z_p^\times$ and we decompose $\Z_p^\times\cong \Delta\times\Gamma$, with $\Gamma=(1+p\Z_p)$ and $\Delta=(\Z/p\Z)^\times$.

A \emph{Heegner point $x\in X_m(\C)$ of conductor $cp^n$} is represented by a pair $(f,g)$ 
satisfying the condition $f(\mathcal{O}_{cp^n})=f(K)\cap gU_mg^{-1}$. Shimura's reciprocity law asserts that for $a\in\widehat{K}^\times$, 
we have 
$x^\sigma=[(f,\hat{f}(a^{-1})g)]$
where $\hat{f}\colon\widehat{K}\rightarrow\widehat{B}$ is the adelization of $f$,
$\mathrm{rec}_K(a)=\sigma$, and $\mathrm{rec}_K$ is the geometrically normalized reciprocity map.

Let $c=c^+c^-$ with $c^+$ divisible by primes which are split in $K$ and $c^-$ divisible by primes which are inert in $K$. Choose decompositions  $c^+=\mathfrak{c}^+\bar{\mathfrak{c}}^+$ and 
$N^+=\mathfrak{N}^+\bar{\mathfrak{N}}^+$ coming from splitting each prime factor. 
For each prime number $\ell$ and each integer $n\geq 0$, define 
\begin{itemize}
      \item $\xi_\ell=1$ if $\ell\nmid N^+cp$;
           \item $\xi_{p}^{(n)}=\frac{1}{\theta-\bar{\theta}}\smallmat{\theta}{\bar\theta}{1}{1}\smallmat{p^n}{1}{0}{1}
           \in \GL_2(K_\mathfrak{p})=\GL_2(\Q_p)$; 
        \item $\xi_{\ell}=\frac{1}{\theta-\bar{\theta}}\smallmat{\theta}{\bar\theta}{1}{1}\smallmat{\ell^s}{1}{0}{1}\in \GL_2(K_\mathfrak{l})=\GL_2(\Q_\ell)$ if $\ell\mid c^+$ and $\ell^s$ is the exact power of $\ell$ dividing $c^+$, 
        where $(\ell)=\mathfrak{l}\bar{\mathfrak{l}}$ is a factorization into prime ideals in $\mathcal{O}_K$ and $\mathfrak{l}\mid \mathfrak{c}^+$; 
       \item $\xi_{\ell}=\smallmat{0}{-1}{1}{0}\smallmat{\ell^s}{0}{0}{1}\in \GL_2(\Q_\ell)$ if $\ell\mid c^-$ and $\ell^s$ is the exact power of $\ell$ dividing $c^-$;
         \item $\xi_\ell=\frac{1}{\theta-\bar{\theta}}\smallmat{\theta}{\bar\theta}{1}{1}\in \GL_2(K_\mathfrak{l})=\GL_2(\Q_\ell)$ if $\ell\mid N^+$, where $(\ell)=\mathfrak{l}\bar{\mathfrak{l}}$ is a factorization into prime ideals in $\mathcal{O}_K$ and $\mathfrak{l}\mid \mathfrak{N}^+$.
  \end{itemize}
We understand these elements $\xi^\star_\bullet$ as elements in $\widehat{B}^\times$ by implicitly using the isomorphisms $i_\ell$ defined before. 
With this convention, define $\xi^{(n)}=(\xi_\ell,\xi_{p}^{(n)})_{\ell\neq p}\in \widehat{B}^\times$.  
Define a map $x_{cp^n,m}\colon\Pic(\mathcal{O}_{cp^n})\rightarrow {X}_m(\C)$ by $[\mathfrak{a}]\mapsto[(\iota_K,a\xi^{(n)})]$,  
where
if $\mathfrak{a}$ represents the ideal class $[\mathfrak{a}]$, then $a\in \widehat{K}^\times$ satisfies $\mathfrak{a}=a\widehat{\mathcal{O}}_{cp^n}\cap K$; here $a \in \widehat{K}^\times$ acts on $\xi^{(n)}\in \widehat{B}^\times$ via left multiplication by $\hat{\iota}_K(a)$. We often write $x_{cp^n,m}(a)$ or $x_{cp^n,m}(\mathfrak{a})$ 
for $x_{cp^n,m}([\mathfrak{a}])$.
One easily verifies that $x_{cp^n,m}(a)$ are Heegner points of conductor $cp^n$ in $X_m(H_{cp^n})$, for all 
$a\in\Pic(\mathcal{O}_{cp^n})$, and all integers $n\geq 0$ and $m\geq0$.

\subsection{Families of Heegner points}\label{CM section}
Recall $\chi_\cyc\colon \Gal(\overline\Q/\Q)\rightarrow\Z_p^\times$ the cyclotomic character and let $\vartheta\colon \Gal(\overline\Q/\Q(\sqrt{p^*}))\rightarrow\Z_p^\times/\{\pm1\}$
be the unique character which satisfies $\vartheta^2=\chi_\cyc$, where $p^*=(-1)^{\frac{p-1}{2}}p$ (see \cite[\S4.4]{LV-MM} for details).   
For integers $n\geq 0$ and $m\geq 1$, define $L_{cp^n,m}=H_{cp^{n}}(\mu_{p^m})$.  
and ${P}_{cp^n,m}={x}_{cp^n,m}(1)$. 
These points are known to satisfy the following properties: 
\begin{enumerate}
    \item ${P}_{cp^n,m}\in {X}_m(L_{cp^n,m})$; 
    \item ${P}_{cp^n,m}^\sigma=\langle\vartheta(\sigma)\rangle\cdot{P}_{cp^n,m}$ for all $\sigma\in\Gal(L_{cp^n,m}/H_{cp^{n+m}})$;
    \item \emph{Vertical compatibility}: if $m>1$, then 
    $\sum_{\sigma}\varsigma_m({P}_{cp^n,m}^\sigma)=U_p\cdot {P}_{cp^n,m-1},$
    where the sum is over all $\sigma\in\Gal(L_{cp^n,m}/L_{cp^{n-1},m})$ and 
    $\varsigma_m\colon {X}_m\rightarrow {X}_{m-1}$ is the canonical projection map; 
    \item \emph{Horizontal compatibility}: if $n>0$, then 
    $\sum_{\sigma}{P}_{cp^n,m}^\sigma=U_p\cdot {P}_{cp^{n-1},m},$
    where the sum is over all $\sigma\in\Gal(L_{cp^n,m}/L_{cp^{n-1},m})$. 
\end{enumerate}

\begin{remark} See \cite[Theorem 1.2]{CL} for a proof of the above properties; in \emph{loc.cit} only the case of definite quaternion algebras and $c=1$ is treated, but it is easy 
to see that the proof, which combines results in \cite{LV-MM} and the description of optimal embeddings in 
\cite{ChHs2}, works in this generality as well. \end{remark}

\subsection{Big Heegner points} \label{sec8.2}
Recall the fixed modular form $f$ of weight $k_0\equiv2\pmod{2(p-1)}$, let $J_m$ be the Jacobian of the Shimura curve $X_m$ and 
let $e_{k_0-2}$ denote the projector 
\[e_{k_0-2}=\frac{1}{p-1}\sum_{\delta\in\Delta}[\delta]\in \Z_p[\Z_p^\times].\] 
By \cite[(42)]{LV-MM}, 
$\Theta(\sigma)=\langle\vartheta(\sigma)\rangle$ for all 
$\sigma\in\Gal(L_{cp^{n+m},m}/H_{cp^{n+m}})$, as endomorphisms of $(e_{k_0-2}\cdot e^\ord)\cdot{J}_m(L_{cp^{n+m},m})$, and therefore, using that $U_p$ has degree $p$ (\emph{cf.} \cite[\S6.2]{LV-MM}), projecting to the ordinary submodule gives 
points 
\[\mathbf{P}_{cp^{n+m},m}=(e_{k_0-2}\cdot e^\ord)\cdot  P_{cp^{n+m,m}}
\in H^0(H_{cp^{n+m}},{J}_m^{\dagger,\ord}(L_{cp^{n+m},m})),\] 
where ${J}_m^{\dagger,\ord}(L)=e^\ord\cdot{J}_m^\dagger(L)$ for any extension $L/\Q$, and for any $\Gal(\overline\Q/\Q)$-module $M$, 
we denote $M^\dagger$ the Galois module $M\otimes\Theta^{-1}$ as before. Corestricting from $H_{cp^{n+m}}$ to $H_{cp^n}$, we obtain classes
\[\mathcal{P}_{cp^n,m}\in H^0(H_{cp^{n}},{J}_m^{\dagger,\ord}(L_{cp^{n+m},m})).\] 
Composing the (twisted) Kummer map we obtain classes 
$\mathfrak{X}_{cp^n,m}$ in $H^1(H_{cp^n},\Ta_p^\ord({J}_m)^\dagger)$ (where $\Ta_p^\ord({J}_m)=e^\ord\Ta_p({J}_m)$) and then, 
using the trace-compatibility properties enjoyed by the collection of points ${P}_{cp^{n+m},m}$ recalled in \S\ref{CM section}, we may define a class \[\mathfrak{X}_{cp^n}=\invlim_m U_p^{-m}\mathfrak{X}_{cp^n,m}\in H^1(H_{cp^n},\mathbf{T}^\dagger).\] 
Under the assumption that $p$ does not divide the class number of $K$, 
using the properties of the points ${P}_{cp^{n+m},m}$ once again, 
we may also define Iwasawa classes 
\[\mathfrak{X}_{cp^\infty}=\invlim_n U_p^{-n}\mathfrak{X}_{cp^n}\in H^1_\mathrm{Iw}(H_{cp^\infty}/H_c,\mathbf{T}^\dagger)\defeq\invlim_{n\geq 0}H^1(H_{cp^n},\mathbf{T}^\dagger),\]
where the inverse limit is taken with respect to the corestriction maps. Since $\mathfrak{P}$ in totally ramified in the extension $H_{cp^\infty}/H_c$, we have $\Gal(H_{cp^\infty}/H_c)\cong\Gamma_\infty$, so we can write 
\[\mathfrak{X}_{cp^\infty}\in H^1_\mathrm{Iw}(\Gamma_\infty,\mathbf{T}^\dagger).\] 
We may thus consider the class 
\[\mathfrak{X}_{\boldsymbol{\xi}}\defeq\mathfrak{X}_{cp^\infty}\otimes \boldsymbol{\xi}^{-1}\in 
H^1_{\mathrm{Iw}}(\Gamma_\infty,\mathbf{T}^{\dagger}_{\boldsymbol{\xi}}).\] Let $\widetilde{\Gamma}_\infty=\Gal(H_{cp^\infty}/K)$. Taking corestriction we get a class
\begin{equation}\label{def=Xi}
 \mathfrak{Z}_{\boldsymbol{\xi}}=\cor_{H_c/K}\left(\mathfrak{X}_{\boldsymbol{\xi}}\right)\in 
H^1_{\mathrm{Iw}}(\widetilde{\Gamma}_\infty,\mathbf{T}^{\dagger}_{\boldsymbol{\xi}})\defeq \invlim_{n\geq -1}H^1(H_{cp^n},\mathbf{T}_{\boldsymbol{\xi}}^\dagger)\end{equation} where $H_{cp^{-1}}\defeq K$. Under the condition that 
the residual Galois representation $\bar\rho$ attached to the Hida family $\I$ is ramified at all primes dividing $N^-$, one can prove that $ \mathfrak{X}_{cp^n}$ belongs to the Greenberg Selmer group (see \cite[Proposition 4.5]{CW}). 

\subsection{The algebraic $p$-adic $L$-function}\label{sec8.3}
Recall the big Perrin-Riou map 
$\mathcal{L}^{{\Gamma}_\infty}_{\omega_{\I}}$ in \eqref{PR} 
and define 
$\mathcal{L}^{{\widetilde\Gamma}_\infty}_{\omega_{\I}}=\varrho\circ\mathcal{L}^{{\Gamma}_\infty}_{\omega_{\I}}$, where $\varrho\colon \widetilde{\I}[\mathfrak{j}^{-1}][[\Gamma_\infty]]\rightarrow \widetilde{\I}[\mathfrak{j}^{-1}][[\widetilde\Gamma_\infty]]$ is the map arising from the canonical map 
$\Gamma_\infty\hookrightarrow\widetilde{\Gamma}_\infty$. 
Since $p$ is split in $K$,
$\res_{\mathfrak{P}}(\mathfrak{X}_{\boldsymbol{\xi}})$ belongs to 
$H^1_\mathrm{Iw}(\Gamma_\infty,\mathbf{T}^{\dagger,+}_{\boldsymbol{\xi}})$ by \cite[Proposition 2.4.5]{Howard-Inv}, so the following definition make sense. 

\begin{definition}\label{geodef} 
${\mathscr{L}}_{\I,\boldsymbol{\xi}}^{\mathrm{alg}}=
\mathcal{L}^{{\widetilde\Gamma}_\infty}_{\omega_{\I}}\left(\res_\mathfrak{P}(\mathfrak{Z}_{\boldsymbol{\xi}})\right)$
is the \emph{algebraic} anticyclotomic $p$-adic $L$-function attached to the family $\I$.
\end{definition}

 \section{Coleman integration on Shimura curves} 

 The results of this section are generalizations to the Shimura curves setting of results available for modular curves. The proofs are the same, and we reproduce them for lacking of precise references. The only new input is the use of Serre--Tate coordinates to normalize che choice of Coleman primitives. 

 
 \subsection{Rigid analytic Shimura curves} Let ${\mathcal{X}}_m$ be the $\Z_p$-model of $X_m$, for integers $m\geq 0$, and denote by $ X_m^\rig$ the rigid analytic space over $\Q_p$ associated with ${\mathcal{X}}_m$.
 Let $\mathbf{Ha}$ be the Hasse invariant of the special fiber $\mathbb{X}_0$ of $\mathcal{X}_0$, and let $\widetilde{\mathbf{Ha}}$ be a lift of $\mathbf{Ha}$ to $\mathcal{X}_0$ (\cite[\S7]{Kassaei}). Then 
$\mathcal{X}_0^\ord=\mathcal{X}_0[1/\widetilde{\mathbf{Ha}}]$ 
is an affine open $\Z_{p}$-subscheme of $\mathcal{X}_0$ representing  the moduli problem which associates to any $\Z_{p}$-scheme $S$ 
the isomorphism classes of triplets $(A,\iota,\alpha)$ where $(A,\iota)$ is an ordinary quaternionic multiplication abelian surface over $S$ 
and $\alpha$ a na\"{\i}ve level $V_0(N^+)$ structure. 
Denote ${X}_0^{\ord,\rig}$ the rigid analytic space associated with $\mathcal{X}_0^\ord$, which is the complement in $X_0^\rig$ of residue disks $D_x$ corresponding to supersingular points $x$ in the special fiber of $\mathcal{X}_0^\ord$ (we refer \emph{e.g.} to \cite[\S3]{Buz} for the notion of \emph{supersingular} abelian surface with quaternionic multiplication). 

For any real number $0\leq \varepsilon<1$, denote $\mathcal{X}_0^-(\varepsilon)$ the open rigid analytic subspace of ${X}_0^\rig$ defined by the condition $|\widetilde{\mathbf{Ha}}|>|p|^{\varepsilon}$; we view $\mathcal{X}_0^-(\varepsilon)$ as defined over any field extension $L/\Q_p$ in which there exists an element $x\in L$ with 
$|x|=|p|^\varepsilon$. 
For any integer $m\geq 1$, let $\varepsilon_m= \frac{1}{p^{m-2}(p-1)}$; then 
$\mathcal{X}_0^-(\varepsilon_m)$ is defined over $\Q_p(\zeta_{p^m})$, and later we will adopt the same symbol for their base change to finite field extensions $L$ of the cyclotomic field $\Q_p(\zeta_{p^m})$. 
By \cite[Proposition 6.30]{Brasca-Eigen2}, any point $x=(A,\iota,\alpha)$ in $\mathcal{X}_0^-(\varepsilon_m)$ 
admits a \emph{canonical subgroup} $C_{p^m}\subseteq A[p^m]$ of order $p^{2m}$ (see \cite[\S3]{Brasca-Eigen2} for the notion of canonical subgroup in this setting; see also \cite[\S10]{Kassaei} and \cite[\S3.2]{Scholze3} for related results).

Let $\digamma_m\colon {\mathcal{X}}_m\rightarrow\mathcal{X}_0$ denote the forgetful map. Define 
$\mathcal{W}_1(p^m)$ (respectively, $\mathcal{W}_2(p^m)$) to be the open rigid analytic subspace of  
${{X}}_m^\rig$ whose closed points corresponds to QM abelian surfaces with level structure 
$x=(A,\iota,\alpha,\beta) $ where:
\begin{itemize}
\item $(A,\iota)$ is a QM abelian surface equipped with a $V_0(N^+)$-structure $\alpha$; 
\item $\beta\colon\boldsymbol{\mu}_{p^m}\rightarrow eC_{p^m}$ is an isomorphism, where $\boldsymbol{\mu}_{p^m}$ is teh group of $p^m$-th roots of unity and, as before, we indicate $C_{p^m}\subseteq A[p^m]$ the 
canonical subgroup of $A$ of order $p^{2m}$; thus, $\beta(\zeta_{p^m})$ is a generator of $eC_{p^m}$; 
\item $\digamma_m(x)$ belongs to $\mathcal{X}_0^-(\varepsilon_m)$  
(respectively, $\mathcal{X}_0^-(\varepsilon_{m+1})$).
\end{itemize} 
We thus have a chain of inclusions of rigid analytic spaces 
$\mathcal{W}_1(p^m)\subseteq\mathcal{W}_2(p^m)\subseteq X_m^\rig.$ 

The \emph{Deligne--Tate map} $\phi_{\mathrm{DT}}\colon\mathcal{X}_0^-(\varepsilon_{m+1}) \rightarrow \mathcal{X}^-_0(\varepsilon_{m})$    is defined by taking quotients by the canonical subgroup $C_p\subseteq A[p]$ of order $p^2$, \emph{i.e.} we put $\phi_{\mathrm{DT}}(A,\iota,\alpha)=(A_0,\iota_0,\alpha_0)$ where $A_0=A/C_p$, and if 
$\varphi\colon A\rightarrow A/C_p$ is the canonical isogeny, 
$\iota_0$ is the polarization induced by $\iota$ and $\varphi$, 
and $\alpha_0$ 
is the level $V_1(N^+)$ structure induced by $\alpha$ and $\varphi$. The map $\bar\phi_{\mathrm{DT}}$ induced by 
$\phi_{\mathrm{DT}}$ on the special fibers of 
$\mathcal{X}_0^-(\varepsilon_{m+1})$ and $\mathcal{X}_0^-(\varepsilon_{m})$ coincides with the Frobenius map $\Frob_p$, and so $\phi_{\mathrm{DT}}\colon\mathcal{X}_0^-(\varepsilon_{m+1}) \rightarrow \mathcal{X}^-_0(\varepsilon_{m})$ is also known as \emph{Frobenius map}. The map $\phi_{\mathrm{DT}}$ thus obtained can be lifted to a map (denoted with the same symbol and also called \emph{Frobenius map})  
\begin{equation}\label{DTFrob}
\phi_{\mathrm{DT}}\colon\mathcal{W}_2(p^m)\longrightarrow \mathcal{W}_1(p^m)\end{equation}
setting 
$\phi_{\mathrm{DT}}(A,\iota,\alpha,\beta)=(A_0,\iota_0,\alpha_0,\beta_0)$ where $\beta_0\colon\mu_{p^m}\rightarrow A/C_p$ sends $\zeta_{p^m}$ to $\varphi(P_{m+1})$ where $P_{m+1}\in C_{p^{m+1}}$ satisfies $pP_{m+1}=P_m=\beta(\zeta_{p^m})$. 

 \subsection{Rigid de Rham cohomology}\label{sec:9.1.2}
We denote by $\widetilde{\mathcal{X}}_m$ the proper, flat, regular balanced model of ${\mathcal{X}}_m$ over $\Z[\zeta_{p^m}]$. The special fiber of $\widetilde{\mathcal{X}}_m$ is the union of a finite number of reduced Igusa curves over $\F_p$, meeting at their supersingular points, and two of these components, denoted $\mathrm{Ig}_\infty$ and $\mathrm{Ig}_0$, are isomorphic to the Igusa curve $\mathrm{Ig}_{m,1}$ of level $m$ over $\F_p$; we let $\mathrm{Ig}_\infty$ denote the connected component corresponding to the canonical inclusion of $\mathrm{Ig}_{m,1}$ into $\widetilde{\mathcal{X}}_m\otimes_{\Z[\mu_{p^m}]}\F_p$. We have an involution $w_{\zeta_{p^m}}$ attached to the chosen $p^n$-root of unity $\zeta_{p^m}$ which interchanges the two components $\mathrm{Ig}_\infty$ and $\mathrm{Ig}_0$ 
(see \cite{Morita} and its generalization to totally real fields in \cite{Carayol-Shimura}). 

\begin{remark} 
The results of Carayol \cite{Carayol-Shimura} 
formally exclude the case under consideration when the fixed totally real number field $F$ is equal to $\Q$, but refers to the paper of Morita \cite{Morita} for this case. A proof of these facts can also be obtained by a direct generalization of the arguments in \cite[Theorem 4.10]{Buz} which considers the case $m=1$. 
\end{remark}

Let $L$ be a finite extension of $\Q_p(\zeta_{p^m})$ where ${\mathcal{X}}_m$ acquires semistable reduction. 
Let $\mathcal{O}_L$ be the valuation ring of $L$ and $k_L$ its residue field. We denote $\varpi\colon\mathscr{X}_m\rightarrow {\mathcal{X}}_m\otimes_{\Z_p[\zeta_{p^m}]}\mathcal{O}_L$ a semistable model of 
$\widetilde{\mathcal{X}}_m$ over $\mathcal{O}_L$. 
Let $\mathcal{G}_m$ denote the dual graph of the special fiber $\mathbb{X}_m$ of $\mathscr{X}_m$; the set $\mathcal{V}(\mathcal{G}_m)$ of vertices of $\mathcal{G}_m$ 
is in bijection with the irreducible components of the special fiber $\mathbb{X}_m$ of $\mathscr{X}_m$, 
and the set $\mathcal{E}(\mathcal{G}_m)$ of oriented edges of $\mathcal{G}$ is in bijection with the singular points of
$\mathbb{X}_m$, together with an ordering of the two components which intersect
at that point. Given $v\in \mathcal{V}(\mathcal{G}_m)$, let $\mathbb{X}_v$ 
denote the associated component in $\mathbb{X}_m$, and let
$\mathbb{X}_v^\mathrm{sm}$ denote the smooth locus of $\mathbb{X}_v$. Let 
$\mathrm{red}\colon\mathscr{X}_m(\C_p)\rightarrow \mathbb{X}_m(\overline{\F}_p)$ be the canonical reduction map. For any $v\in\mathcal{V}(\mathcal{G}_m)$, let $\mathcal{W}_v=\mathrm{red}^{-1}(\mathbb{X}_v(\overline{\F}_p))$ denote the wide open space associated with the connected component $\mathbb{X}_v$, and let $\mathcal{A}_v=\mathrm{red}^{-1}(\mathbb{X}_v^\mathrm{sm}(\overline{\F}_p))$ denote the underlying affinoid $\mathcal{A}_v\subseteq\mathcal{W}_v$. If $e=(s(e),t(e))\in\mathcal{E}(\mathcal{G}_m)$ is a edge, then $\mathcal{W}_e=\mathcal{W}_{s(e)}\cap \mathcal{W}_{t(e)}$ is equal to $\mathrm{red}^{-1}(\{x_e\})$, where $\{x_e\}=\mathbb{X}_{s(e)}\cap\mathbb{X}_{t(e)}$. The set $\{\mathcal{W}_v:v\in\mathcal{V}(\mathcal{G}_m)\}$ form an \emph{admissible cover} of the rigid analytic space $\mathscr{X}_m(\C_p)=\widetilde{X}_m(\C_p)$ by wide open subsets. Let $d\colon\mathcal{O}(\mathcal{U})\rightarrow \Omega^1_{\rig}(\mathcal{U})$ be the differential map for any wide open $\mathcal{U}$, where $\mathcal{O}=\mathcal{O}^\rig_{\mathscr{X}_m}$ is the sheaf of rigid analytic functions on $\mathscr{X}_m$ and $\Omega^1_\rig$ the sheaf of rigid 1-forms; the de Rham cohomology group can be described as the set of hyper-cocycles \[\omega=(\{\omega_v\}_{v\in\mathcal{V}(\mathcal{G}_m)},\{f_e\}_{e\in\mathcal{E}(\mathcal{G}_m)})\in\prod_{v\in\mathcal{V}(\mathcal{G}_m)}\Omega^1(\mathcal{W}_v)\times \prod_{ e\in\mathcal{E}(\mathcal{G}_m)}\mathcal{O}_{\mathcal{W}_e}\] such that $df_e=\omega_{t(e)}-\omega_{s(e)}$ and $f_{\bar{e}}=-f_e$ for each $e=(s(e),t(e))\in\mathcal{E}(\mathcal{G}_m)$ (where $\bar{e}$ denotes the edge with same vertices of $e$ in the reverse orientation) modulo hyper-coboundaries, which are elements of the form $(df_v,f_{t(e)}-f_{s(e)})$ for a set $\{f_v\}_{v\in\mathcal{V}(\mathcal{G}_m)}$ of functions $f_v\in\mathcal{O}_{\mathcal{W}_v}$. For each edge $e=(s(e),t(e))$, we have an \emph{annular residue map} $\mathrm{res}_{\mathcal{W}_e}\colon\Omega^1_{\mathscr{X}_m}(\mathcal{W}_e)\rightarrow\C_p$ defined by expanding a differential form $\omega\in \Omega^1_{\mathscr{X}_m}(\mathcal{W}_e)$ as $\omega=\sum_{n\in\Z}a_nt^n dt$ for a fixed uniformizing parameter $t$ on $\mathcal{W}_e$ and setting $\mathrm{res}_{\mathcal{W}_e}(\omega)=a_{-1}$. 
We say that a class $\omega\in H^1_\mathrm{dR}(\mathscr{X}_m)$ is \emph{pure} if for every edge $e\in\mathcal{E}(\mathcal{G}_m)$, $\res_{\mathcal{W}_e}(\omega_{s(e)})$ vanishes. For pure classes $\omega=(\omega_v,f_e)$, $\eta=(\eta_v,g_e)$ the de Rham pairing $\langle \omega,\eta\rangle_\dR$ is computed by the formula 
\begin{equation}\label{dRpairing}\langle \omega,\eta\rangle_\dR=\sum_{e=(s(e),t(e))\in\mathcal{E}(\mathcal{G}_m)}\mathrm{res}_{\mathcal{W}_{e}}(F_e\eta_{s(e)})\end{equation} where $F_e$ is an analytic primitive of the restriction to $\mathcal{W}_e$ of $\omega_{s(e)}$, which exists because $\omega_v$ has vanishing annular residues at $\mathcal{W}_e$, and is well defined up to a constant (and since $\eta_v$ has also vanishing annular residue at $v$, the value of the pairing is independent of this choice). See \cite[\S3.5]{CI} (or \cite[\S3.1]{DR}) for more details. 

The birational map $\varpi:\mathscr{X}_m\rightarrow \widetilde{\mathcal{X}}_m\otimes_{\Z_p[\zeta_{p^m}]}\mathcal{O}_L$ induces an isomorphism between the generic fibers; it also induces an isomorphism between two of the components of the special fiber $\mathbb{X}_m$ of $\mathscr{X}_m$ 
with $\mathrm{Ig}_\infty\otimes_{\F_p}k_L$ and $\mathrm{Ig}_0\otimes_{\F_p}k_L$: we denote by $\mathtt{Ig}_\infty$ and $\mathtt{Ig}_0$ these two components of $\mathbb{X}_m$.  
Let $\mathcal{W}_\infty(p^m)=\mathrm{red}^{-1}(\mathtt{Ig}_\infty)$ and $\mathcal{W}_0(p^m)=\mathrm{red}^{-1}(\mathtt{Ig}_0)$ be the corresponding wide open subsets with associated underlying affinoids $\mathcal{A}_\infty(p^m)$ and $\mathcal{A}_0(p^m)$, respectively. The $L$-valued points of the rigid anaytic space $\mathcal{A}_\infty(p^m)$ are in bijection with quadruplets $(A,\iota,\alpha,\beta) $ where $(A,\iota)$ is a QM abelian surface, $\alpha$ is a level $V_0(N^+)$ structure and $\beta\colon\boldsymbol{\mu}_{p^m}\rightarrow eC_{p^m}$ is an isomorphism (as before, $C_{m}\subseteq A[p^m]$ indicates the 
canonical subgroup of $A$ of order $p^{2m}$).  
The $L$-vector spaces 
\[H^1_\mathrm{rig}(\mathcal{W}_\infty(p^m))=\frac{\Omega^1_\mathrm{rig}(\mathcal{W}_\infty(p^m))}{d\mathcal{O}_{\mathcal{W}_\infty(p^m)}}\quad\text{and}\quad H^1_\mathrm{rig}(\mathcal{W}_0(p^m))=\frac{\Omega^1_\mathrm{rig}(\mathcal{W}_0(p^m))}{d\mathcal{O}_{\mathcal{W}_0(p^m)}}\] are equipped with a canonical action of Hecke operators $T_\ell$ for primes $\ell\nmid Np$, and with canonical $L$-linear Frobenius endomorphisms defined by choosing characteristic zero lifts $\Phi_\infty$ and $\Phi_0$ of the Frobenius endomorphism in characteristic $p$ to a system of wide open neighborhoods of the affinoids $\mathcal{A}_\infty(p^m)$ in $\mathcal{W}_\infty(p^m)$ and  $\mathcal{A}_0(p^m)$ in $\mathcal{W}_0(p^m)$, respectively.
In the case of Shimura curves, we take $\Phi_\infty=\phi_{\mathrm{DT}}$ and $\Phi_0=\tilde{\phi}_{\mathrm{DT}}\defeq w_{\zeta_{p^m}}\circ\phi_{\mathrm{DT}}\circ w_{\zeta_{p^m}}$, where $w_{\zeta_{p^m}}$ is the Atkin--Lehner involution associated with the choice of $\zeta_{p^m}$ which interchanges the two wide opens $\mathcal{W}_\infty(p^m)$ and $\mathcal{W}_0(p^m)$. Let 
\[\mathrm{res}_\mathcal{W}\colon H^1_\mathrm{dR}(\mathscr{X}_m)\longrightarrow H^1_\mathrm{rig}(\mathcal{W})=\frac{\Omega^1_\mathrm{rig}(\mathcal{W})}{d\mathcal{O}_{\mathcal{W}}}\] 
be the restriction map, where $\mathcal{W}$ is an admissible wide open space obtained as inverse image via the reduction map of an irreducible component of the special fiber of $\widetilde{\mathcal{X}}_m$; in particular we have the two maps $\mathrm{res}_\infty=\mathrm{res}_{\mathcal{W}_\infty(p^m)}$ 
and $\mathrm{res}_0=\mathrm{res}_{\mathcal{W}_0(p^m)}$. Let $H^1_\mathrm{dR}(\mathscr{X}_m)^\mathrm{prim}$ be the subspace of the de Rham cohomology of $\mathscr{X}_m$ associated with the primitive subspace of the $L$-vector space of modular forms of weight $2$ and level $N^+p^m$, and $H^1_\mathrm{rig}(\mathcal{W})^\mathrm{pure}$ is the subspace generated by pure classes of rigid differentials (\emph{i.e.} those classes with vanishing annular residues, as before), for $\mathcal{W}=\mathcal{W}_\infty(p^m)$ and $\mathcal{W}=\mathcal{W}_0(p^m)$.  

\begin{proposition}\label{EB}
The restriction maps $\mathrm{res}_\infty$ and $\mathrm{res}_0$ induce an isomorphism 
of $L$-vector spaces 
\[\res=\res_\infty\oplus\res_0:H^1_\mathrm{dR}(\mathscr{X}_m)^\mathrm{prim}\simeq H^1_\mathrm{rig}(\mathcal{W}_\infty(p^m))^\mathrm{pure}\oplus H^1_\mathrm{rig}(\mathcal{W}_0(p^m))^\mathrm{pure}\]
which is equivariant with respect to the action of Hecke operators $T_\ell$ for $\ell\nmid Np$ on both sides, the crystalline Frobenius endomorphism, denoted by $\Phi$, acting on the LHS and the Frobenius endomorphism $(\phi_{\mathrm{DT}},\widetilde{\phi}_\mathrm{DT})$ acting on the RHS. 
\end{proposition}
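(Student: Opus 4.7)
The plan is to work throughout with the hyper-cocycle description of $H^1_\dR(\mathscr{X}_m)$ recalled just above the statement, where a class is represented by a tuple
\[(\{\omega_v\}_{v\in\mathcal{V}(\mathcal{G}_m)},\{f_e\}_{e\in\mathcal{E}(\mathcal{G}_m)})\]
modulo hyper-coboundaries, and $\res_\infty$, $\res_0$ simply extract the components $\omega_\infty$ and $\omega_0$. The first preliminary task is to check that the image lies in the pure subspace. For every edge $e$ incident to the vertex $\infty$ (or $0$), the coboundary relation $df_e=\omega_{t(e)}-\omega_{s(e)}$ together with the fact that $f_e$ is analytic on $\mathcal{W}_e$ forces matching annular residues of $\omega_{s(e)}$ and $\omega_{t(e)}$; on the primitive subspace these common residues must vanish, since the residues along edges encode the ``toric'' part of the cohomology (the contribution of the cycles of $\mathcal{G}_m$), which is by definition complementary to the primitive subspace. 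Hence $\res_\infty$ and $\res_0$ land in the pure summands.

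For injectivity on the primitive part, suppose $\res_\infty(\omega)=\res_0(\omega)=0$; writing $\omega_\infty=dh_\infty$ and $\omega_0=dh_0$ on the respective wide opens, one subtracts the hyper-coboundary built from $(h_\infty,h_0,0,\dots)$ to reduce to the case $\omega_\infty=\omega_0=0$. The remaining components of $\mathbb{X}_m$ are either chains of rational curves contributing only to the toric part, or components whose $H^1_\dR$ contribution corresponds to oldforms; under the primitive hypothesis the remaining $\omega_v$ and $f_e$ must then form a coboundary. Surjectivity is the reverse construction: given a pure pair $(\eta_\infty,\eta_0)$, set $\omega_\infty=\eta_\infty$, $\omega_0=\eta_0$, $\omega_v=0$ for the other $v$; the vanishing annular residues of $\eta_\infty,\eta_0$ at the relevant edges allow one to choose primitives $f_e\in\mathcal{O}(\mathcal{W}_e)$ solving $df_e=\omega_{t(e)}-\omega_{s(e)}$, producing a well-defined primitive hyper-cocycle.

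The Hecke equivariance for $T_\ell$ with $\ell\nmid Np$ is immediate: these correspondences extend to $\widetilde{\mathcal{X}}_m$, preserve the distinguished components $\mathtt{Ig}_\infty$ and $\mathtt{Ig}_0$, and commute with rigid restriction. For the Frobenius compatibility, the crystalline Frobenius $\Phi$ on $H^1_\dR(\mathscr{X}_m)$ is, by the Coleman--Iovita--style description of rigid cohomology of semistable curves, computed component-wise by any lift of the absolute Frobenius on the smooth locus of $\mathbb{X}_m$. The Deligne--Tate map $\phi_{\mathrm{DT}}$ in \eqref{DTFrob} is exactly such a lift on $\mathcal{W}_\infty(p^m)$, since its reduction modulo $p$ coincides with $\Frob_p$ on the Igusa component $\mathtt{Ig}_\infty$; this gives $\res_\infty\circ\Phi=\phi_{\mathrm{DT}}\circ\res_\infty$. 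The component $\mathtt{Ig}_0$ is obtained from $\mathtt{Ig}_\infty$ via the Atkin--Lehner involution $w_{\zeta_{p^m}}$ in the Morita--Carayol description of the special fiber of $\widetilde{\mathcal{X}}_m$, so transporting along this involution yields the Frobenius lift $\widetilde{\phi}_{\mathrm{DT}}=w_{\zeta_{p^m}}\circ\phi_{\mathrm{DT}}\circ w_{\zeta_{p^m}}$ on $\mathcal{W}_0(p^m)$, which in turn gives the second Frobenius compatibility.

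The step I expect to be most delicate is precisely this Frobenius compatibility on the $\mathtt{Ig}_0$ component: while on $\mathtt{Ig}_\infty$ the identification of $\Phi$ as a lift of absolute Frobenius, together with the Deligne--Tate nature of $\phi_{\mathrm{DT}}$, is standard, one must verify that conjugation by $w_{\zeta_{p^m}}$ genuinely intertwines the crystalline Frobenius on $\mathcal{W}_0(p^m)$ with $\widetilde{\phi}_{\mathrm{DT}}$. This ultimately reduces to the Morita--Carayol description of the special fiber recalled in \S\ref{sec:9.1.2}; the verification is a direct moduli-theoretic computation involving how the canonical subgroup and the level $V_1(N^+p^m)$-structure transform under $w_{\zeta_{p^m}}$, and it is the main point where the generalisation from modular curves to Shimura curves requires non-trivial input beyond formal manipulations with the hyper-cocycle complex.
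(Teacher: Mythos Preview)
Your sketch is essentially an unpacking of what the paper merely cites: the paper's own proof consists of a pointer to \cite[\S4.4]{EmBr} together with a generalisation of \cite[Theorem 2.1]{Colemanhigher} to Shimura curves, declared to be routine. Your hyper-cocycle framework and the breakdown into purity, injectivity, surjectivity, and Frobenius compatibility is exactly the shape of the Breuil--Emerton/Coleman argument, so in that sense you are reconstructing the cited proof rather than giving a different one.

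That said, there is a real gap in your injectivity step. You assert that the components of $\mathbb{X}_m$ other than $\mathtt{Ig}_\infty$ and $\mathtt{Ig}_0$ are ``either chains of rational curves contributing only to the toric part, or components whose $H^1_\dR$ contribution corresponds to oldforms''. The second clause is the heart of the matter and is not justified: one must know that the \emph{intermediate} Igusa components of the special fibre of $\widetilde{\mathcal{X}}_m$ are (quotients of) Igusa curves of strictly lower $p$-level, so that their rigid $H^1$ carries only $p$-old eigenclasses and hence vanishes on the primitive part. This structural fact about the Katz--Mazur/Carayol model is precisely the content of the cited Coleman theorem in the modular-curve case, and it is the step whose extension to Shimura curves the paper flags as the required (easy) generalisation. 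Without it, your reduction ``the remaining $\omega_v$ and $f_e$ must then form a coboundary'' is unsupported. Likewise, in your surjectivity step you must check that the hyper-cocycle you build actually lies in the primitive subspace, which again comes down to the same structural input.

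By contrast, the Frobenius compatibility you single out as ``most delicate'' is comparatively formal once one accepts the Coleman--Iovita description: the crystalline Frobenius on the log-crystalline cohomology of a semistable curve is computed by \emph{any} Frobenius lift on each wide open, and $\phi_{\mathrm{DT}}$ (resp.\ its Atkin--Lehner conjugate) is such a lift by construction. So your emphasis is somewhat inverted relative to where the genuine content lies.
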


\begin{proof}
The proof of these results can be obtained as in \cite[\S4.4]{EmBr} using a generalization of  \cite[Theorem 2.1]{Colemanhigher} to the case of Shimura curves. This generalization does not present technical difficulties and is left to the interested reader. 
\end{proof}

Fix a finite set of points $S$ of $\mathscr{X}_m(\C_p)$ which reduce to smooth points in $\mathbb{X}_m(\overline{\F}_p)$. The residue disk $D_Q$ of each $Q\in S$ (defined as the set of points of $\mathscr{X}_m(\C_p)$ whose reduction is equal to the reduction of $Q$) is conformal to the open unit disk $D\subseteq\C_p$ because $\mathrm{red}(Q)$ is smooth, and we may fix an isomorphism $\varphi_Q\colon D_Q\overset\sim\rightarrow D$ of rigid analytic space which takes $Q$ to $0$. For each $Q\in S$, fix a real number $r_Q<1$ which belongs to the set $\{|p|^m:m\in\Q\}$. 
Let $\mathcal{V}_{Q}\subseteq D_Q$ be the annulus consisting of points $x\in D_Q$ such that $r_Q<|\varphi_Q(x)|_p<1$; define the \emph{orientation} of $\mathcal{V}_Q$ by choosing the subset $\{x\in D_Q: |\varphi_Q(x)|_p\leq r_Q\}$ of the set $D_Q-\mathcal{V}_Q$, which consists in two connected components. We may then consider the
affinoid 
\[\mathcal{A}_S=\mathscr{X}_m(\C_p)-\bigcup_{Q\in S}D_Q\] and the 
 wide open neighborhood
\[\mathcal{W}_S=\mathcal{A}_S\cup \bigcup_{Q\in S}\mathcal{V}_Q\]
of $\mathcal{A}_S$, so that $\mathcal{A}_S$ is the underlying affinoid of $\mathcal{W}_S$. We also put 
\[\widetilde{\mathcal{W}}_\infty= \mathcal{W}_\infty(p^m)-\bigcup_{Q\in S}(D_Q-\mathcal{V}_Q)\quad\text{and}\quad\widetilde{\mathcal{W}}_0= \mathcal{W}_0(p^m)-\bigcup_{Q\in S}(D_Q-\mathcal{V}_Q).\]
For a Hecke module $M$, denote by $M[\mathcal{F}]$ the eigencomponent corresponding to an eigenform $\mathcal{F}$. 
Let $Y_S=\mathscr{X}_m-S$ and let $\mathcal{F}\in S_2(\Gamma_m,L)$ be a weight $2$ newform on ${X}_m$. 
An excision argument from Proposition \ref{EB}
shows that the canonical restriction map $\res=(\res_0,\res_\infty)$ induces an isomorphism 
\begin{equation}\label{eq:9.3} 
\res\colon H^1_\dR(Y_S/L)[\mathcal{F}]\overset\simeq\longrightarrow H^1_\rig(\widetilde{\mathcal{W}}_\infty)[\mathcal{F}]\oplus H^1_\rig(\widetilde{\mathcal{W}}_0)[\mathcal{F}].\end{equation} Moreover, again from  Proposition \ref{EB}, a class in $H^1_\dR(Y_S/L)[\mathcal{F}]$ is the restriction of a class of $H^1_\dR(\mathscr{X}_m/L)$ if and only if it can be represented by a pair of differentials $\tilde\omega_\infty\in \Omega^1_{\rig}(\widetilde{\mathcal{W}}_\infty)$ and $\tilde\omega_0\in \Omega^1_{\rig}(\widetilde{\mathcal{W}}_0)$
with vanishing annular residues. 
If $\omega$ and $\eta$ are classes in 
$H^1_\mathrm{dR}(\mathscr{X}_m)^\mathrm{prim}$, set $\omega_\infty=\mathrm{res}_\infty(\omega)$, $\omega_0=\mathrm{res}_0(\omega)$, $\eta_\infty=\mathrm{res}_\infty(\eta)$, $\eta_0=\mathrm{res}_0(\eta)$. 
Let $F_{\infty|\mathcal{V}_Q}$ be any solution of the differential equation $dF=\omega_{\infty}$ on $\mathcal{V}_Q$, and let $F_{0|\mathcal{V}_Q}$ be any solution of the differential equation $dF=\omega_{0}$ on $\mathcal{V}_Q$.
It follows from \eqref{dRpairing} 
that for each $\omega,\eta\in H^1_\mathrm{dR}(\mathscr{X}_m)[\mathcal{F}]$ we have 
\begin{equation}\label{pairing-residues} 
\langle\eta,\omega\rangle_\dR= \sum_{\mathcal{V}\subseteq\widetilde{\mathcal{W}}_\infty}\mathrm{res}_\mathcal{V}(F_{\infty|\mathcal{V}}\cdot \eta_{\infty|\mathcal{V}})+ 
\sum_{\mathcal{V}\subseteq\widetilde{\mathcal{W}}_0}\mathrm{res}_\mathcal{V}(F_{0|\mathcal{V}}\cdot \eta_{0|\mathcal{V}})\end{equation}
where the sum is over all annuli $\mathcal{V}$. 

\subsection{Coleman primitives}
Let $x=({A},\iota,\alpha,\beta)$ be a point of ${\mathcal{X}}_m(\mathcal{O}_{\C_p})$ which reduces to a smooth point $\bar{x}=(\bar{A},\bar\iota,\bar\alpha)$ in the special fiber of $\mathcal{X}_0$.
We assume that $A$ is ordinary, and $\beta\colon\boldsymbol{\mu}_{p^m}\rightarrow eA[p^m]^0$ is a \emph{trivialization}.
Let $\mathcal{R}_{\bar{x}}$ be the universal quaternionic deformation ring of $\bar{x}$ and let $\mathcal{A}_{\bar{x}}\rightarrow \Spec(\mathcal{R}_{\bar{x}})$ be the universal quaternionic multiplication abelian surface.  
Fix a $\Z_p$-basis $\{x_{\bar{A}},x_{\bar{A}}'\}$ of $\Ta_p(\bar{A})$ such that $x_{\bar{A}}$ is a $\Z_p$-basis of $e\Ta_p({\bar{A}})$ and $ex_{\bar{A}}'=0$ and let $T_{\bar{x}}$ be the associate Serre--Tate coordinate. 
We consider the formal differential form 
$\omega_{\bar{x}}$ obtained by pulling-back $dT/T$ along the map $\widehat{\mathcal{A}}_{\bar x}\rightarrow \G_m$, where $\widehat{\mathcal{A}}_{\bar x}$ is the formal group  of 
${\mathcal{A}}_{\bar x}$. 
Let $D_x$ be the residue disk of $\bar x$ in ${\mathcal{X}}_m$, defined to be the set of points of the associated rigid analytic space whose reduction is equal to $\bar{x}$.  
Using the Serre--Tate coordinates around $A$ associated with the choice of the basis $\{x_A,x_A'\}$, for $\mathcal{F}\in S_2(\Gamma_m,\mathcal{O}_{\C_p})$ we may write on $D_x$ the differential form associated with $\mathcal{F}$ as 
\begin{equation}\label{diff}
\omega_\mathcal{F}=\mathcal{F}(T_x)\omega_x.\end{equation}
Let   $D_x^\phi=\phi_{\mathrm{DT}}(D_x)$ be the residue disk in ${\mathcal{X}}_m$ of $\bar\phi_{\mathrm{DT}}(x)$.  

Let $(A,\iota,\alpha,\beta)$ be a point in $D_x$. 
The operator $V$ is described by the formula
\[V\mathcal{F}(A,\iota,\alpha,\beta)=\mathcal{F}(A_0,\iota_0,\alpha_0,\beta_0)\] where:
\begin{itemize}
\item  
$A_0=A/C_p$ is the quotient by the canonical subgroup, and $\iota_0$ is induced by the projection map $\pi\colon A\rightarrow A_0$ from $\iota$;  
\item $\pi$ and the dual isogeny $\pi^\vee\colon A_0^\vee\rightarrow A^\vee$ induce isomorphisms between $A[N^+]$ and $A_0[N^+]$, and 
we may define $\alpha_0=(\pi^\vee)^{-1}\circ\alpha$ (here, we view a $V_1(N^+)$-level structure 
as represented by a homomorphism of group schemes $\alpha\colon(\Z/N^+\Z)^2\rightarrow eA[N^+]$); 
\item The dual isogeny $\pi^\vee$ is \'etale, so it induces an isomorphism on formal completions; composing 
with the principal polarizations of $A$ and $A_0$, we obtain an isomorphism, still denoted $\pi^\vee\colon A_0[p^m]^0\rightarrow A[p^m]^0$, 
and define a trivialization $\beta_0\colon\boldsymbol{\mu}_{p^m}\rightarrow eA_0[p^m]^0$ by the equation $\beta_0^{-1}=\beta^{-1}\circ\pi^\vee$. 
\end{itemize}

\begin{lemma}\label{lemma9.3} 
$\phi_{\mathrm{DT}}^*(\omega_{\mathcal{F}})=p\omega_{V\mathcal{F}}$. 
\end{lemma}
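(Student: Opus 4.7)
The plan is to compute both sides in Serre--Tate coordinates on the residue disk $D_x$ and reduce the identity to the behaviour of the canonical subgroup of $\widehat{\G}_m$ under the natural isogeny. Since the statement is local on residue disks of ordinary points, it suffices to verify the claimed equality after restricting to $D_x$ for a fixed $x=(A,\iota,\alpha,\beta)$ as in the paragraph preceding the lemma, and using the formula \eqref{diff}.

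The first, and most substantial, step is to describe $\phi_{\mathrm{DT}}$ in terms of Serre--Tate parameters. Fix the Serre--Tate coordinate $T_x$ on $D_x$ coming from the basis $\{x_{\bar A},x_{\bar A}'\}$, and correspondingly fix $T_{x_0}$ on $D_{x_0}=\phi_{\mathrm{DT}}(D_x)$ coming from the basis induced on $e\Ta_p(\bar A_0)$ via the canonical isogeny $\pi\colon A\to A_0=A/C_p$. Under the Serre--Tate trivializations, $e\widehat{A}\simeq\widehat{\G}_m$ and $e\widehat{A}_0\simeq\widehat{\G}_m$ canonically; the canonical subgroup $eC_p\subseteq e\widehat{A}$ corresponds precisely to $\mu_p\subseteq\widehat{\G}_m$, and the induced map $e\widehat{A}/eC_p\overset\sim\to e\widehat{A}_0$ becomes the quotient $\widehat{\G}_m/\mu_p\simeq\widehat{\G}_m$ which on parameters is $T\mapsto T^p$. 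Consequently one obtains the key formula
\[\phi_{\mathrm{DT}}^*(T_{x_0})=T_x^{p}.\]

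Granting this, the rest is a short computation. Since $\omega_x$ and $\omega_{x_0}$ are, by construction, the pullbacks of $dT/T$ along the Serre--Tate trivializations of $e\widehat{\mathcal{A}}_{\bar x}$ and $e\widehat{\mathcal{A}}_{\bar x_0}$ respectively, the previous step gives
\[\phi_{\mathrm{DT}}^*(\omega_{x_0})=\frac{d(T_x^{p})}{T_x^{p}}=p\,\frac{dT_x}{T_x}=p\,\omega_x.\]
At the same time, the defining relation $V\mathcal{F}(A,\iota,\alpha,\beta)=\mathcal{F}(A_0,\iota_0,\alpha_0,\beta_0)$, together with the identification $\phi_{\mathrm{DT}}^*T_{x_0}=T_x^{p}$ above, yields the Serre--Tate expansion identity
\[V\mathcal{F}(T_x)=\mathcal{F}(T_x^{p}).\]
Combining the two displays with \eqref{diff},
\[\phi_{\mathrm{DT}}^{*}(\omega_\mathcal{F})=\mathcal{F}(\phi_{\mathrm{DT}}^*T_{x_0})\cdot\phi_{\mathrm{DT}}^*(\omega_{x_0})=\mathcal{F}(T_x^{p})\cdot p\,\omega_x=p\,V\mathcal{F}(T_x)\omega_x=p\,\omega_{V\mathcal{F}},\]
which is the claim.

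The hard part is the first step: verifying that $\phi_{\mathrm{DT}}$, defined modularly by quotienting by the canonical subgroup and updating $\beta$ via $\beta_0^{-1}=\beta^{-1}\circ\pi^\vee$, really induces $T\mapsto T^p$ on Serre--Tate coordinates. This requires checking two compatibilities: first, that $eC_p$ agrees with $\mu_p$ under the Serre--Tate trivialization of $e\widehat{A}$, which amounts to the fact that the canonical subgroup in the ordinary case is the kernel of reduction on the formal part; and second, that the prescribed choice of $\beta_0$ matches the Serre--Tate basis induced by pushing $\{x_{\bar A},x_{\bar A}'\}$ forward through $\pi$, so that the parameters $T_x$ and $T_{x_0}$ are genuinely related as stated. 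Both are standard facts in the elliptic modular case and transfer to the quaternionic setting using the idempotent $e$ and the one-dimensionality of $e\widehat{A}$; no new Shimura-curve-specific difficulty arises beyond this bookkeeping.
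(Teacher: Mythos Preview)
Your proof is correct and follows essentially the same route as the paper's: both split $\phi_{\mathrm{DT}}^*(\omega_{\mathcal{F}})$ into the function part $(V\mathcal{F})(T_x)$ and the differential part $\phi_{\mathrm{DT}}^*(\omega_{x_0})$, and both reduce the latter to the fact that the canonical isogeny becomes the $p$-th power map on $\widehat{\G}_m$ under the given trivializations. The paper simply cites this last fact from \cite[Lemma 3.5.1]{Kat} and \cite[Lemmas 4.4, 4.11]{Brooks}, whereas you unpack it explicitly; one small point to keep straight in your write-up is that the symbol $T$ in the display $\phi_{\mathrm{DT}}^*(\omega_{x_0})=d(T_x^{p})/T_x^{p}$ refers to the coordinate on the formal group $e\widehat{\mathcal{A}}_{\bar x}\simeq\widehat{\G}_m$, not to the Serre--Tate coordinate on the residue disk $D_x$ used just above --- both transform by $p$-th power under $\phi_{\mathrm{DT}}$, but for distinct reasons.
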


\begin{proof}
From the definition of $V$ and \eqref{diff} 
we have 
\[\phi_{\mathrm{DT}}^*(\omega_\mathcal{F}) 
=(V\mathcal{F})(T_x)\phi_{\mathrm{DT}}^*(\omega_{\bar\phi_{\mathrm{DT}} (x)}).\] On the other hand, $\phi_{\mathrm{DT}}^*\big(\omega_{\bar\phi_{\mathrm{DT}} (x)}\big)=p\omega_x$ by \cite[Lemma 3.5.1]{Kat} (see also \cite[Lemmas 4.4, 4.11]{Brooks}), concluding the proof.  
\end{proof}

Let $a_p$ denote the $U_p$-eigenvalue of $\mathcal{F}$ and  
define the polynomial $\Pi(X)=1-\frac{a_p}{p}X$.

\begin{proposition}\label{Col1}
\begin{enumerate} 
\item There exists a locally analytic function $F_{\infty}$ on $\mathcal{W}_\infty(p^m)$, unique up to a constant, such that $dF_{\infty}=\omega_\mathcal{F}$ on $\mathcal{W}_\infty(p^m)$ and  
$\Pi(\phi_{\mathrm{DT}}^*)F_{\infty}$ is a rigid analytic function on a wide-open neighborhood $\mathcal{W}_\infty$ of 
$\mathcal{A}_\infty(p^m)$ contained in $\mathcal{W}_\infty(p^m)$.  
\item There exists a locally analytic function $F_0$ on 
$\mathcal{W}_0(p^m)$, unique up to a constant, such that $dF_0=\omega_\mathcal{F}$ on $\mathcal{W}_0(p^m)$ and $\Pi(\tilde\phi_{\mathrm{DT}}^*)F_0$ is a rigid analytic function on a wide-open neighborhood $\mathcal{W}_0$ of $w_{\zeta_{p^m}}\widetilde X_m(0)$ in $\mathcal{W}_0(p^m)$.\end{enumerate}
\end{proposition}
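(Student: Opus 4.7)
I prove part (1); part (2) follows by conjugation with the Atkin--Lehner involution $w_{\zeta_{p^m}}$, which intertwines $\phi_{\mathrm{DT}}$ and $\tilde\phi_{\mathrm{DT}}$ and exchanges $\mathcal{W}_\infty(p^m)$ with $\mathcal{W}_0(p^m)$. The strategy is Coleman's $p$-adic integration on the ordinary locus, adapted from the modular-curve case (cf.\ \cite[\S3]{DR}) to Shimura curves via the Serre--Tate formalism developed in \S9 and the Frobenius lift $\phi_{\mathrm{DT}}$ of \eqref{DTFrob}.

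Working disk by disk, on each residue disk $D_x \subseteq \mathcal{W}_\infty(p^m)$ of an ordinary point $x$ I use \eqref{diff} to write $\omega_\mathcal{F} = \mathcal{F}(T_x)\,dT_x/T_x$ with $\mathcal{F}(T_x) = \sum_{n\geq 1} a_n(x)\, T_x^n$, and define the local primitive
\[
F_\infty(T_x) \;=\; \sum_{n\geq 1}\frac{a_n(x)}{n}\,T_x^n.
\]
Two such local primitives on overlapping disks differ by a constant (both have differential $\omega_\mathcal{F}$), so they glue to a locally analytic function $F_\infty$ on $\mathcal{W}_\infty(p^m)$; since $\mathcal{W}_\infty(p^m) = \mathrm{red}^{-1}(\mathtt{Ig}_\infty)$ is connected, $F_\infty$ is determined up to a global additive constant.

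Next I verify that $\Pi(\phi_{\mathrm{DT}}^*) F_\infty$ extends to a rigid analytic function on a wide-open neighborhood $\mathcal{W}_\infty$ of $\mathcal{A}_\infty(p^m)$. Because $\phi_{\mathrm{DT}}$ is defined as quotient by the canonical subgroup, it acts on the Serre--Tate coordinate at an ordinary point as $T \mapsto T^p$ (up to a unit identification of bases, consistent with Lemma~\ref{lemma9.3}), and the $U_p$-eigenform identity $a_{pn}(x) = a_p\, a_n(x)$ then yields the explicit formula
\[
\Pi(\phi_{\mathrm{DT}}^*) F_\infty(T_x) \;=\; \sum_{n\geq 1}\frac{a_n(x)}{n}T_x^n \;-\; \frac{a_p}{p}\sum_{n\geq 1}\frac{a_n(x)}{n}T_x^{pn} \;=\; \sum_{\substack{n\geq 1\\ p\,\nmid\, n}} \frac{a_n(x)}{n}\,T_x^n.
\]
This is the classical $p$-depletion: for $(n,p)=1$ the factor $1/n$ is $p$-adically harmless, so the depleted series converges precisely where the original overconvergent form $\mathcal{F}$ does, and its radius of overconvergence furnishes the desired $\mathcal{W}_\infty$.

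Uniqueness is immediate from the connectedness of $\mathcal{W}_\infty(p^m)$: any other candidate $F_\infty'$ yields $F_\infty - F_\infty'$ locally constant, hence globally constant, and $\Pi(\phi_{\mathrm{DT}}^*)(\text{const}) = (1 - a_p/p)(\text{const})$ is automatically rigid, imposing no further constraint. The place I expect to require the most care is not the formal Serre--Tate manipulation above, which is identical to the $\mathrm{GL}_2$ setting, but rather the verification that the local $p$-depleted expansions glue to an honest rigid analytic function on a strict wide-open neighborhood of the \emph{whole} affinoid $\mathcal{A}_\infty(p^m)$. Concretely, one needs a uniform control on the radius of overconvergence of $\mathcal{F}$ across the ordinary locus of the Shimura curve; this is exactly what the canonical-subgroup machinery of \cite{Brasca-Eigen2} is designed to provide, and it is the point at which the quaternionic setting needs more than a verbatim translation from \cite{DR}.
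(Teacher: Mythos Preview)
Your argument has a genuine gap at the gluing step. Residue disks in $\mathcal{W}_\infty(p^m)$ are \emph{disjoint}, not overlapping, so the sentence ``two such local primitives on overlapping disks differ by a constant, so they glue'' is vacuous: \emph{any} disk-by-disk choice of constants gives a locally analytic primitive of $\omega_\mathcal{F}$. The content of the proposition is that among this infinite-dimensional family there is a one-parameter subfamily for which $\Pi(\phi_{\mathrm{DT}}^*)F_\infty$ is globally rigid analytic. Your specific normalisation (zero constant term in the $T_x$-expansion on \emph{every} disk) is an infinite set of conditions, and the disk-by-disk computation showing each local $p$-depleted series converges on its own disk does not show that these local series are restrictions of a single rigid function on a wide-open neighbourhood of $\mathcal{A}_\infty(p^m)$. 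Indeed the paper's later Definition~\ref{def9.6} normalises at a \emph{single} point $x_\infty$ precisely because imposing the condition at all points simultaneously is overdetermined. (There is also a related subtlety you elide: $\phi_{\mathrm{DT}}$ carries $D_x$ to a \emph{different} disk $D_{\phi_{\mathrm{DT}}(x)}$, so $\phi_{\mathrm{DT}}^*F_\infty|_{D_x}$ involves the expansion of $F_\infty$ at $\phi_{\mathrm{DT}}(x)$, not at $x$; your displayed identity tacitly assumes these agree.)

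You correctly flag at the end that the gluing is where care is needed, but you misdiagnose the issue as uniform overconvergence of $\mathcal{F}$ and point to \cite{Brasca-Eigen2}. The actual obstruction is the coherent choice of constants across disks, and the tool that resolves it is Coleman's integration theory. The paper's proof proceeds accordingly: Lemma~\ref{lemma9.3} gives $\Pi(\phi_{\mathrm{DT}}^*)\omega_\mathcal{F}=0$ on $\mathcal{W}_\infty=\phi_{\mathrm{DT}}^{-1}(\mathcal{W}_\infty(p^m)\cap\mathcal{W}_1(p^m))$; one checks $\Pi(\phi_{\mathrm{DT}}^*)$ is an isomorphism on locally analytic functions because $|a_p|=p^{1/2}$; and then one invokes \cite[Theorem~8.1]{Coleman-Shimura} (with \cite[Proposition~3.1.2]{Katz-Dwork} for the regular-singular annulus hypothesis) to obtain $F_\infty$ directly. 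Only afterwards is $d^{-1}\mathcal{F}^{[p]}$ \emph{defined} as $\Pi(\phi_{\mathrm{DT}}^*)F_\infty$, reversing the order you attempt.
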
 

\begin{proof} (1) In  
${\mathcal{W}}_\infty=\phi_{\mathrm{DT}}^{-1}(\mathcal{W}_\infty(p^m)\cap\mathcal{W}_1(p^m))$ we have $\Pi(\phi_{\mathrm{DT}}^*)\omega_f=0$ by Lemma \ref{lemma9.3}; moreover, $\Pi(\phi_{\mathrm{DT}}^*)$ induces an isomorphism of the sheaf of locally analytic functions on $\mathcal{W}_\infty(p^m)$ because  the (complex) absolute value of $a_p$ is $p^{1/2}$. Then (1) follows from \cite[Theorem 8.1]{Coleman-Shimura}, using \cite[Proposition 3.1.2]{Katz-Dwork} (see also \cite[Lemma 5.1]{CI}) to check the condition on regular singular annuli. For (2), apply (1) to $w_{\zeta_{p^m}}\omega_\mathcal{F}$. \end{proof}

\begin{definition}\label{defCol}
The functions $F_{\infty}$ and $F_{0}$ in Proposition \ref{Col1} are the \emph{Coleman primitives} of $\mathcal{F}$ on $\mathcal{W}_\infty$ and $\mathcal{W}_0$, respectively.  
\end{definition} 

Note that (1) of Proposition \ref{Col1} says that 
$\Pi(\phi_{\mathrm{DT}}^*)F_{\infty}$ is overconvergent. More precisely, 
for any integer $m\geq1$ and any real number $0\leq \varepsilon<\varepsilon_m$, let $\mathcal{X}_m(\varepsilon)$ denote the affinoid subdomain of $\mathcal{W}_1(p^m)$ consisting of those points $x$ such that $|\widetilde{\mathbf{Ha}}(\digamma_m(x))|\geq |p|^{\varepsilon}$; to complete the notation, when 
$m=0$ and $0\leq \varepsilon<1$, we also denote $\mathcal{X}_0(\varepsilon)$ the affinoid subdomain of ${X}_0^\rig$ defined by the condition $|\widetilde{\mathbf{Ha}}|\geq |p|^{\varepsilon}$, so that $\mathcal{X}_0^-(\varepsilon)\subseteq \mathcal{X}_0(\varepsilon)$. 
For any integer $k$ and any integer $m\geq 0$,  
define the $\C_p$-vector space of \emph{overconvergent modular forms} of weight $k$ on ${X}_m$ to be 
\[S_k^\mathrm{oc}({X}_m)=\invlim_{\varepsilon} H^0(\mathcal{X}_m(\varepsilon),\omega_{m,\C_p}^{\otimes k})\]
where $0\leq\varepsilon<\varepsilon_m$ with $\varepsilon$ approaching $\varepsilon_m$.  
Then we have $\Pi(\phi_{\mathrm{DT}}^*)F_{\infty}\in  S_k^\mathrm{oc}({X}_m)$. 

The proof of \cite[Theorem 10.1]{Coleman-Shimura} shows that 
 $d(\Pi(\phi_{\mathrm{DT}}^*)(F_\infty))=\Pi(\phi_{\mathrm{DT}}^*)\omega_\mathcal{F}$; on the other hand,  
 $\Pi(\phi_{\mathrm{DT}}^*)\omega_\mathcal{F}=\omega_{\mathcal{F}^{[p]}}$, where recall that $\mathcal{F}^{[p]}=(1-U_pV)\mathcal{F}$ is the \emph{$p$-depletion} of $\mathcal{F}$.  
Define the overconvergent modular form 
 \[d^{-1}\omega_{\mathcal{F}^{[p]}}=\Pi(\phi_{\mathrm{DT}}^*)(F_\infty).\] Then 
 $\Pi(\phi_{\mathrm{DT}}^*)^{-1}d^{-1} \omega_{\mathcal{F}^{[p]}}=F_\infty$. Note that the definition of $d^{-1}\omega_{\mathcal{F}^{[p]}}$ depends on the choice of a constant defining $F_\infty$, 
 which we fix as follows by means of Serre--Tate expansions.  
Pick a point $x_\infty$ in the wide open neighborhood ${\mathcal{W}}_\infty$ of $\mathcal{A}_\infty(p^m)$ appearing Proposition \ref{Col1}; accordingly with our definitions, 
$\mathrm{red}(x_\infty)$ belongs to $\texttt{Ig}_\infty(\overline{\F}_p)$, so we may consider the $T_{x_\infty}$-expansion 
$\mathcal{F}(T_{x_\infty})$ of $\mathcal{F}$ at $x_\infty$
associated with the choice of a basis 
$\{x_A,x_A'\}$ of $\Ta_p(A)$ associated with the trivialization $\beta$ via Cartier duality (see \cite[\S 3.1]{Magrone}).
The $T_{x_\infty}$-expansion of $\mathcal{F}^{[p]}$ is then
 \[\mathcal{F}^{[p]}(T_{x_\infty})=\sum_{p\nmid n}a_nT_{x_\infty}^n\] 
 {see \cite[Lemma 5.2]{Burungale}).} Define 
 \begin{equation}\label{normcol}
 d^{-1}\mathcal{F}^{[p]}(T_{x_\infty})=\sum_{p\nmid n}\frac{a_n}{n+1}T_{x_\infty}^{n+1}.\end{equation}
We may then normalize the choice of $F_\infty$ by imposing that the $T_{x_\infty}$-expansion of  $d^{-1}\omega_{\mathcal{F}^{[p]}}$ is that in \eqref{normcol}; more precisely, we introduce the following: 
 \begin{definition}\label{def9.6} Let $d^{-1}\mathcal{F}^{[p]}_{x_\infty}$ denote the unique overconvergent modular form such that:
 \begin{itemize} 
 \item $d(d^{-1}\mathcal{F}^{[p]}_{x_\infty})=\mathcal{F}^{[p]}$; 
 \item The $T_{x_\infty}$-expansion of $d^{-1}\mathcal{F}^{[p]}_{\infty}$ is equal to $d^{-1}\mathcal{F}^{[p]}(T_{x_{\infty}})$.\end{itemize}
 \end{definition}
 
The previous definition fixes the choice of $d^{-1}\omega_{\mathcal{F}^{[p]}}$ and, consequently, of $F_\infty$, to be $d^{-1}\mathcal{F}^{[p]}_{x_\infty}$. 
Note that in the residue disk of $x_\infty$ we have 
 $d^{-1}\mathcal{F}^{[p]}_{x_\infty}=d^{-1}\mathcal{F}^{[p]}(T_{x_\infty})\omega_{x_\infty}$. 
 
 \begin{definition}\label{defnormCol}
 We say that the Coleman primitive $F_\infty$ in $\mathcal{W}_\infty$ appearing in Definition \ref{defCol} \emph{vanishes} at $x_\infty$ if 
 the choice of the constant is normalized as in \eqref{normcol}. 
 \end{definition}
 
With these definitions, if $F_\infty$ vanishes at $x_\infty$, we have 
\begin{equation}\label{Col-dminus1}
d^{-1}\mathcal{F}^{[p]}_{x_\infty}=\Pi(\phi_{\mathrm{DT}}^*)F_\infty.\end{equation}

 \subsection{Logarithmic de Rham cohomology}Let $L_0$ be the maximal unramified extension of $L$. The work of Hyodo--Kato \cite{HK}  equips the $L$-vector space $H^1_\dR({X}_m/L)$ with a canonical $L_0$-subvector space 
\[H^1_{\text{log-cris}}(\mathscr{X}_m)\longmono H^1_\dR({X}_m/L)\] equipped with a semi-linear Frobenius operator $\varphi$; by the results of Tsuji \cite{Tsiji}, 
there is a canonical comparison isomorphism $\mathbf{D}_{\dR,L}(V_m)\simeq H^1_\dR(X_m/L)$ of filtered $\varphi$-modules, where $V_m=H^1_{\text{ét}}({X}_m\otimes_\Q\overline\Q,\Q_p)$. For a Hecke module $M$, let us denote $M[\mathcal{F}]$ the eigencomponent corresponding to the eigenform $\mathcal{F}$; we also denote $F_\mathcal{F}\subseteq\overline{\Q}_p$ the Hecke field of $\mathcal{F}$ inside the algebraic closure of $\Q_p$. Set (generalizing previous definitions in the case of modular forms appearing as specializations of a Hida family) $V_\mathcal{F}^*=(V_m\otimes_{\Q_p}F_{\mathcal{F}})[\mathcal{F}]$. 
We then have a canonical isomorphism 
$\mathbf{D}_{\mathrm{cris},L_0}(V_\mathcal{F}^*)\simeq H^1_\text{log-cris}(\mathscr{X}_m)[\mathcal{F}]$ of $L_0\otimes_{\Q_p}F_\mathcal{F}$-modules 
compatible with the $\varphi$-action which induces after extending scalars an isomorphism $\mathbf{D}_{\dR,L}(V_\mathcal{F})\simeq H^1_\dR({X}_m/L)[\mathcal{F}]$  of $L\otimes_{\Q_p}F_\mathcal{F}$-modules. 

Let ${J}_m=\mathrm{Jac}({X}_m\otimes_{\Q_p}L)$ and consider the map 
{\small{\[\xymatrix{
\delta_m\colon {J}_m(L)\ar[r]^-{\mathrm{Kum}}& H^1_f(L,\Ta_p({J}_m))\ar[r]^-{\mathrm{proj}}& 
H^1_f(L,V_\mathcal{F}^*(1))\ar[r]^-{\log}& \frac{\mathbf{D}_\dR(V_\mathcal{F}^*(1))}{\Fil^0(\mathbf{D}_\dR(V_\mathcal{F}^*(1)))}\ar[r]^-\sim & (\Fil^0(\mathbf{D}_\dR(V_\mathcal{F})))^\vee}\] }}
where:
\begin{itemize}
\item $\mathrm{Kum}$ is the Kummer map and we write $\mathbf{D}_\mathrm{dR}=\mathbf{D}_{\mathrm{dR},L}$ to simplify the notation;
\item $\mathrm{proj}$ is induced by the projection map $\Ta_p({J}_m)\rightarrow V_\mathcal{F}$, and $V_\mathcal{F}= V_\mathcal{F}^*(1)$;
\item $\log$ is the inverse of the Bloch--Kato exponential map 
\[\exp\colon\frac{\mathbf{D}_\dR(V_\mathcal{F}^*(1))}{\Fil^0(\mathbf{D}_\dR(V_\mathcal{F}^*(1)))}\overset\sim\longrightarrow H^1_f(L,V_\mathcal{F}^*(1))\] 
which is an isomorphism in our setting; in fact, $\ker(\exp)=\mathbf{D}_\mathrm{cris}(V^*_\mathcal{F}(1))^{\varphi=1}$ (see the comment after Definition 3.10 in \cite{BK}), which is trivial since $\mathcal{F}$ is taken to be $p$-ordinary and therefore $p$ is not a root of the Hecke polynomial of $\mathcal{F}$.
\item The isomorphism 
\[\frac{\mathbf{D}_\dR(V_\mathcal{F}^*(1))}{\Fil^0(\mathbf{D}_\dR(V_\mathcal{F}^*(1)))}\simeq (\Fil^0(\mathbf{D}_\dR(V_\mathcal{F})))^\vee\] is induced by the de Rham pairing. 
\end{itemize} 

Following \cite[\S3.4]{BDP} and \cite[\S2.2]{Castella-MathAnn}, the map $\delta_m$ can be described as follows. First, recall that the Bloch--Kato Selmer group can be identified with the group of crystalline extensions  
\[0\longrightarrow V_\mathcal{F}^*(1)\longrightarrow W\overset{\rho}\longrightarrow \Q_p\longrightarrow 0\]
and since $\mathbf{D}_\mathrm{cris}(V_\mathcal{F}^*(1))^{\varphi=1}$ is trivial,
the resulting extension of $\varphi$-modules 
\begin{equation}\label{crist}
0\longrightarrow \mathbf{D}_\mathrm{cris}(V_\mathcal{F}^*(1))\longrightarrow \mathbf{D}_\mathrm{cris}(W)\longrightarrow L_0\longrightarrow 0\end{equation}
admits a unique section 
\[s_W^{\Frob}\colon L_0\longrightarrow \mathbf{D}_\mathrm{cris}(W)\] with $\eta_W^{\Frob}=s_W^{\Frob}(1)\in  
\mathbf{D}_\mathrm{cris}(W)^{\varphi=1}$. We also fix a section 
\[s_W^\mathrm{Fil}\colon L\longrightarrow \mathrm{Fil}^0(\mathbf{D}_\dR(W))\]
of the exact sequence of $L$-vector spaces 
\begin{equation}\label{dR-Fil}
0\longrightarrow \mathrm{Fil}^0(\mathbf{D}_\dR(V_\mathcal{F}^*(1)))\longrightarrow 
\mathrm{Fil}^0(\mathbf{D}_\dR(W))\longrightarrow L\longrightarrow 0\end{equation}
obtained by extending scalars from $L_0$ to $L$ in \eqref{crist}, using the canonical isomorphism with de Rham cohomology, and 
taking the $\mathrm{Fil}^0$-parts of the resulting sequence. Define $\eta_W^{\mathrm{Fil}}=s_W^\mathrm{Fil}(1)$ and consider the difference \[\eta_W=\eta_W^{\Frob}-\eta_W^{\mathrm{Fil}}\] 
viewed as an element in $\mathbf{D}_\dR(W)$; this difference comes from an element in $\mathbf{D}_\dR(V_\mathcal{F}^*(1))$, denoted with the same symbol $\eta_W$, and its image modulo $\mathrm{Fil}^0(\mathbf{D}_\dR(V_\mathcal{F}^*(1)))$ is well defined. Then we have (see \cite[Lemma 2.4]{Castella-MathAnn} and the references therein) 
\[\log(W)=\eta _W\mod \mathrm{Fil}^0(\mathbf{D}_\dR(V_\mathcal{F}^*(1))).\]

Let $\Delta\in J_s(L)$ be the class of a degree zero divisor in ${X}_m$, with support contained in the finite set of points $S\subseteq {X}_m(L)$. Define the map 
\begin{equation}\label{Kummer}
\xymatrix{
\kappa_m\colon J_m(L)\ar[r]^-{\mathrm{Kum}}& H^1_f(L,\Ta_p( J_m))\ar[r]^-{\mathrm{proj}}& 
H^1_f(L,V_\mathcal{F}^*(1))}\end{equation} 
and consider the class $\kappa_m(\Delta)\in H^1_f(L,V_\mathcal{F}^*(1))$. 
Denote $W_\Delta$ the extension class associated with $\kappa_m(\Delta)$. Attached to $W_\Delta$ we then have the class $\eta_{W_\Delta}$ in $\mathbf{D}_\dR(V_\mathcal{F}^*(1))$ constructed before, and we may consider the (weight $2$) newform $\mathcal{F}^*$ associated with the twisted form  
$\mathcal{F}\otimes\psi_\mathcal{F}^{-1}$, where $\psi_\mathcal{F}$ denotes the character of $\mathcal{F}$. Let as before $\omega_{\mathcal{F}^*}$ denote the differential form attached to $\mathcal{F}^*$; denote with the same symbol $\omega_{\mathcal{F}^*}$ the corresponding element in $\mathbf{D}_\dR(V_{\mathcal{F}^*}^*)$ via the isomorphism 
$\mathbf{D}_\dR(V_{\mathcal{F}^*}^*)\simeq H^1_\dR(\widetilde{X}_m/L)[\mathcal{F}^*]$. Note that $\omega_{\mathcal{F}^*}$ belongs to $\mathrm{Fil}^1(\mathbf{D}_\dR(V_{\mathcal{F}^*}^*))$, which is equal to $\mathrm{Fil}^0(\mathbf{D}_\dR(V_{\mathcal{F}}))$; we therefore obtain a class 
$\omega_{\mathcal{F}^*}\in \mathrm{Fil}^0(\mathbf{D}_\dR(V_{\mathcal{F}}))$.

\begin{lemma}\label{Col5}
$\delta_m(\Delta)(\omega_{\mathcal{F}^*})=\langle \eta_{W_\Delta},\omega_{\mathcal{F}^*}\rangle_\dR$.  
\end{lemma}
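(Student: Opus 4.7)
The statement essentially unwinds the definition of $\delta_m$ together with the identification of the Bloch--Kato logarithm via extension classes, so the proof plan is short: it is a matter of tracking definitions through the four maps composing $\delta_m$.

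First I would note that by construction $\delta_m(\Delta)$ is the image of $\log(\kappa_m(\Delta)) \in \mathbf{D}_{\dR}(V_\mathcal{F}^*(1))/\Fil^0(\mathbf{D}_{\dR}(V_\mathcal{F}^*(1)))$ under the duality isomorphism
\[
\mathbf{D}_{\dR}(V_\mathcal{F}^*(1))/\Fil^0(\mathbf{D}_{\dR}(V_\mathcal{F}^*(1))) \overset\sim\longrightarrow (\Fil^0(\mathbf{D}_{\dR}(V_\mathcal{F})))^\vee
\]
induced by the de Rham pairing $\langle\cdot,\cdot\rangle_{\dR}\colon \mathbf{D}_{\dR}(V_\mathcal{F}^*(1))\times\mathbf{D}_{\dR}(V_\mathcal{F})\rightarrow L$ (the Poincar\'e duality pairing on $H^1_{\dR}(\widetilde{X}_m/L)[\mathcal{F}]$, under the comparison isomorphisms recalled above). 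Hence, for any class $\omega\in\Fil^0(\mathbf{D}_{\dR}(V_\mathcal{F}))$, the value $\delta_m(\Delta)(\omega)$ equals $\langle \log(\kappa_m(\Delta)),\omega\rangle_{\dR}$, where the pairing is well-defined modulo $\Fil^0(\mathbf{D}_{\dR}(V_\mathcal{F}^*(1)))$ precisely because $\Fil^0(\mathbf{D}_{\dR}(V_\mathcal{F}^*(1)))$ pairs to zero with $\Fil^0(\mathbf{D}_{\dR}(V_\mathcal{F}))$ (the two filtrations are complementary under Poincar\'e duality).

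Next, I would invoke the description of the Bloch--Kato logarithm via extension classes recalled in the text: for the crystalline extension $W_\Delta$ associated with $\kappa_m(\Delta)$, the class $\eta_{W_\Delta}=\eta^{\Frob}_{W_\Delta}-\eta^{\Fil}_{W_\Delta}$ lies in $\mathbf{D}_{\dR}(V_\mathcal{F}^*(1))$ and satisfies
\[
\log(\kappa_m(\Delta)) = \eta_{W_\Delta} \pmod{\Fil^0(\mathbf{D}_{\dR}(V_\mathcal{F}^*(1)))}.
\]
Substituting into the previous formula and taking $\omega=\omega_{\mathcal{F}^*}\in \Fil^0(\mathbf{D}_{\dR}(V_\mathcal{F}))$ (via the identification $\Fil^1(\mathbf{D}_{\dR}(V_{\mathcal{F}^*}^*))=\Fil^0(\mathbf{D}_{\dR}(V_\mathcal{F}))$ already noted) yields
\[
\delta_m(\Delta)(\omega_{\mathcal{F}^*})=\langle \eta_{W_\Delta},\omega_{\mathcal{F}^*}\rangle_{\dR},
\]
which is the desired equality.

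The only point to verify carefully — and the main (minor) obstacle — is the compatibility of the three identifications used: the isomorphism $\mathbf{D}_{\mathrm{cris},L_0}(V_\mathcal{F}^*)\simeq H^1_{\text{log-cris}}(\mathscr{X}_m)[\mathcal{F}]$ of Tsuji and its extension to $L$, the Poincar\'e duality $H^1_{\dR}(\widetilde{X}_m/L)\times H^1_{\dR}(\widetilde{X}_m/L)\to L$ with the de Rham pairing on $\mathbf{D}_{\dR}(V_\mathcal{F}^*(1))\times\mathbf{D}_{\dR}(V_\mathcal{F})$, and the self-duality $V_\mathcal{F}^*(1)\simeq V_\mathcal{F}$ (for weight $2$) arising from the Weil pairing on $\Ta_p(J_m)$. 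Once these are matched — following exactly the same computation as in \cite[Lemma 2.4]{Castella-MathAnn} and \cite[\S3.4]{BDP} in the modular curve setting — there is nothing further to prove, as the argument is formal and does not use the specific geometry of Shimura curves beyond the existence of the log-crystalline comparison isomorphism.
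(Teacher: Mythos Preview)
Your proof is correct and follows essentially the same approach as the paper: the paper simply refers to \cite[\S4.1]{BDP} and \cite[\S2.2]{Castella-MathAnn}, and your argument spells out precisely that computation, namely unwinding the definition of $\delta_m$ and invoking the extension-class description of the Bloch--Kato logarithm already recorded in the text (the formula $\log(W)=\eta_W\bmod\Fil^0$ with reference to \cite[Lemma 2.4]{Castella-MathAnn}). There is nothing to add.
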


\begin{proof}
Follow the argument in the case of modular curves in 
\cite[\S4.1]{BDP} (the good reduction case) and \cite[\S2.2]{Castella-MathAnn} (the bad reduction case).  
\end{proof}

Pick as before a point $x_\infty$ in the wide open space ${\mathcal{W}}_\infty$. Let $F_\infty^*$ be the Coleman primitive of $\omega_{\mathcal{F}^*}$ on $\mathcal{W}_\infty$ which vanishes at $x_\infty$ (\emph{cf.} Definition \ref{defnormCol}). 
We may then consider the map $j_m^{(x_\infty)}\colon {X}_m(\C_p)\rightarrow  J_m(\C_p)$ which associates to $P$ the divisor $(P)-(x_\infty)$. We simply write $j_m$ for this map when $x_\infty$ 
is understood. 
 
\begin{lemma}\label{Col6} Let $\Delta=j_m(P)$ and $F_\infty^*$ the Coleman primitive of $\omega_{\mathcal{F}^*}$ on $\mathcal{W}_\infty$ which vanishes at $\infty$. Assume that $m>1$. Then 
$\langle \eta_{W_\Delta},\omega_{\mathcal{F}^*}\rangle_\dR=F_{\omega_{\mathcal{F}^*}}(P)$. 
\end{lemma}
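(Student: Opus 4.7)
The plan is to reduce to Lemma \ref{Col5} and then compute the de Rham pairing $\langle \eta_{W_\Delta}, \omega_{\mathcal{F}^*}\rangle_\dR$ by an explicit residue calculation based on \eqref{pairing-residues}, using the standard recipe that realizes the extension class $W_\Delta$ geometrically via the cohomology of the open complement $Y_S = \mathscr{X}_m \setminus S$ with $S = \{P, x_\infty\}$. Following the approach of \cite[\S4.1]{BDP} (good reduction) and \cite[\S2.2]{Castella-MathAnn} (bad reduction), one realizes the Kummer class $\kappa_m(\Delta)$ as sitting inside an excision sequence on $\mathscr{X}_m$ relative to $S$, and correspondingly $\eta_{W_\Delta} \in \mathbf{D}_\dR(V_\mathcal{F}^*(1))$ is the image of a natural class attached to $\Delta$ in $H^1_\dR(Y_S/L)$.

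More precisely, the Frobenius-equivariant splitting $s^{\Frob}_{W_\Delta}$ of the crystalline extension corresponds, via the isomorphism \eqref{eq:9.3} and the semistable comparison of Hyodo--Kato--Tsuji, to a pair $(\tilde F_\infty^*, \tilde F_0^*)$ of locally analytic primitives of $\omega_{\mathcal{F}^*}$ on $\widetilde{\mathcal{W}}_\infty$ and $\widetilde{\mathcal{W}}_0$, uniquely determined up to constants by the requirement that $\Pi(\phi_{\mathrm{DT}}^*)\tilde F_\infty^*$ and $\Pi(\tilde\phi_{\mathrm{DT}}^*)\tilde F_0^*$ extend as overconvergent functions to wide-open neighborhoods of the two underlying affinoids; these are exactly the Coleman primitives of Proposition \ref{Col1}. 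The filtration-preserving splitting $s^{\mathrm{Fil}}_{W_\Delta}$ corresponds to a logarithmic differential of the third kind $\zeta_\Delta$ on $\mathscr{X}_m$ with $\mathrm{res}_P(\zeta_\Delta)=1$ and $\mathrm{res}_{x_\infty}(\zeta_\Delta)=-1$, so the difference $\eta_{W_\Delta} = \eta^{\Frob}_{W_\Delta} - \eta^{\mathrm{Fil}}_{W_\Delta}$ is the class in $H^1_\dR(Y_S/L)$ represented by the pair $(F^*_\infty, F^*_0)$ on $\widetilde{\mathcal{W}}_\infty$ and $\widetilde{\mathcal{W}}_0$ respectively, viewed as differentials with log-poles at the punctures and residues $\pm 1$ there.

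Next I would apply the residue formula \eqref{pairing-residues}: since $\omega_{\mathcal{F}^*}$ has no poles, the only contributions to $\langle \eta_{W_\Delta}, \omega_{\mathcal{F}^*}\rangle_\dR$ come from the annuli $\mathcal{V}_Q$ surrounding the two points $Q \in S = \{P, x_\infty\}$. On each such annulus the local primitive of $\omega_{\mathcal{F}^*}$ agrees (up to a constant) with the restriction of the global Coleman primitive $F_\infty^*$, so the computation of $\mathrm{res}_{\mathcal{V}_Q}(F \cdot \omega_{\mathcal{F}^*})$ reduces to an evaluation at the center of the disk: one obtains $F^*_\infty(P)$ from the annulus around $P$ and $-F^*_\infty(x_\infty)$ from the one around $x_\infty$. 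By the normalization in Definition \ref{defnormCol} (enforced through the Serre--Tate expansion \eqref{normcol}), $F^*_\infty(x_\infty) = 0$, yielding the claimed equality.

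The main obstacle in this plan is the bookkeeping caused by semistable (as opposed to good) reduction: the point $P$ may reduce to either $\mathtt{Ig}_\infty$ or $\mathtt{Ig}_0$, and one must verify that the residue contributions computed with the two Coleman primitives $F^*_\infty$ and $F^*_0$ patch consistently on the annuli joining the two components. This is handled by observing that on $\mathcal{V}_e$ for an edge $e$ the two local primitives differ by a constant (because $\omega_{\mathcal{F}^*}$ has vanishing annular residues, coming from a primitive class on $\mathscr{X}_m$), so the sum of annular residues in \eqref{pairing-residues} is independent of the choices; the normalization at $x_\infty$ then propagates across the graph to fix the value at $P$, yielding the assertion for $m>1$ (where the semistable model and the Igusa components are available). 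The assumption $m>1$ enters here only to guarantee the existence of the semistable integral model $\mathscr{X}_m$ with the two distinguished components $\mathtt{Ig}_\infty, \mathtt{Ig}_0$ used to set up Proposition \ref{EB}.
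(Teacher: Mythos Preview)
Your overall strategy—realize $\eta_{W_\Delta}$ via $H^1_\dR(Y_S)$, split as $\eta^{\Frob}_{W_\Delta}-\eta^{\mathrm{Fil}}_{W_\Delta}$, and compute the pairing by residues—is the same as the paper's. But the execution has a genuine conceptual error and a missing step.

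\textbf{The misidentification.} You write that the Frobenius splitting $s^{\Frob}_{W_\Delta}$ ``corresponds to a pair $(\tilde F_\infty^*,\tilde F_0^*)$ of locally analytic primitives of $\omega_{\mathcal{F}^*}$\ldots\ exactly the Coleman primitives.'' This is not correct. The class $\eta^{\Frob}_{W_\Delta}\in\mathbf{D}_\mathrm{cris}(W_\Delta)^{\varphi=1}$ is a cohomology class represented by a pair of \emph{differential forms} $(\eta_\infty^{\Frob},\eta_0^{\Frob})\in\Omega^1_\rig(\widetilde{\mathcal W}_\infty)\times\Omega^1_\rig(\widetilde{\mathcal W}_0)$ satisfying $\eta_\infty^{\Frob}=\phi_{\mathrm{DT}}^*\eta_\infty^{\Frob}+dG_\infty$ and having residues $\pm1$ at $P,x_\infty$; it is attached to the divisor $\Delta$, not to $\omega_{\mathcal{F}^*}$. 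The Coleman primitives $F_\infty^*,F_0^*$ are \emph{functions} with $dF_\infty^*=\omega_{\mathcal{F}^*}$, and they enter the computation only through the pairing formula \eqref{pairing-residues}, where one computes $\sum_{\mathcal V}\res_{\mathcal V}(F_\infty^*\cdot\eta_\infty)$. You have conflated the ``Frobenius-fixed'' property of $\eta^{\Frob}$ with the ``Frobenius-primitive'' property of Coleman integrals; these are separate ingredients that interact in the argument but are not the same object.

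\textbf{The missing step.} Because of this, you do not carry out the heart of the proof: showing that $\sum_{\mathcal V}\res_{\mathcal V}(F_\infty^*\cdot\eta_\infty^{\Frob})+\sum_{\mathcal V}\res_{\mathcal V}(F_0^*\cdot\eta_0^{\Frob})=0$. Your claim that ``only contributions come from the annuli around $P$ and $x_\infty$'' is unjustified: the sum in \eqref{pairing-residues} runs over \emph{all} annuli of the semistable cover, and $F_\infty^*$ is only locally analytic, so one cannot simply say residues on the other annuli vanish. The paper handles this (Step~2) by using $\eta_\infty^{\Frob}=\phi_{\mathrm{DT}}^*\eta_\infty^{\Frob}+dG_\infty$ together with the Leibniz rule and the overconvergence of $\Pi(\phi_{\mathrm{DT}}^*)F_\infty^*$ to show the full $\eta^{\Frob}$-contribution vanishes; only then does the $\eta^{\mathrm{Fil}}$-part localize to $\mathcal V_P$ and $\mathcal V_{x_\infty}$ and yield $F_\infty^*(P)$. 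Finally, the hypothesis $m>1$ is used to ensure $\widetilde{\mathcal W}_\infty\cap\widetilde{\mathcal W}_0=\emptyset$ (so the two sums are independent), not merely to produce a semistable model.
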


\begin{proof} The proof follows \cite[\S4.2]{DR} and \cite[Proposition 2.9]{Castella-MathAnn}, which adapts 
the proof of \cite[Proposition 3.21]{BDP} to the semistable setting. 

\emph{Step 1.} We first describe the classes $\eta_{W_\Delta}^{\mathrm{Fil}}$ and $\eta_{W_\Delta}^{\Frob}$. 
Let $S=\{P,x_\infty\}$ and $Y_S=\mathscr{X}_m(\C_p)-S$ as before. The class $\eta^{\mathrm{Fil}}_{W_\Delta}$ is an element in $\mathrm{Fil}^0(\mathbf{D}_\dR(W_\Delta))$ with $\rho_\dR(\eta^{\mathrm{Fil}}_{W_\Delta})=1$, where $\rho_\dR$ is the top right arrow map in the following commutative diagram 
\[\xymatrix{
0\ar[r] & 
\mathrm{Fil}^0(\mathbf{D}_\dR(V_{\mathcal{F}}^*(1))\ar[r]\ar[d]^-{\simeq}& 
\mathrm{Fil}^0(\mathbf{D}_\dR(W_\Delta))\ar[r]^-{\rho_\dR}\ar[d] &
L\otimes_{\Q_p}F_{\mathcal{F}}\ar[r]\ar[d]^-{\Delta}&0\\
0\ar[r]&
\mathrm{Fil}^1\left(H^1_\dR({X}_m/L)\right)[\mathcal{F}]\ar[r]&
\mathrm{Fil}^1\left(H^1_\dR(Y_S/L)\right)[\mathcal{F}]\ar[r]^-{\oplus\res_Q} &
(L\otimes_{\Q_p}F_\mathcal{F})_0^{S}
\ar[r]
&0
}\]which realizes the exact sequence in the top horizontal line (which is \eqref{dR-Fil}) as the pull-back of the bottom horizontal line with respect to the rightmost $L\otimes_{\Q_p}F_\mathcal{F}$-linear vertical map $\Delta$ taking $1$ to $(P,-x_\infty)$; in the bottom horizontal arrow, 
$\res_Q(\omega)$ is the residue at $Q\in S$ of the differential form $\omega$, 
and $(L\otimes_{\Q_p}F_\mathcal{F})^S_0$ denotes the degree zero divisors over $S$ with coefficients in $L\otimes_{\Q_p}F_\mathcal{F}$, \emph{i.e.} those $(x_Q)_{Q\in S}$ in 
$L\otimes_{\Q_p}F_\mathcal{F}$ with $\sum_{Q\in S}n_Q=0$. Therefore, we have 
$\res_P(\eta^\mathrm{Fil}_{W_\Delta})=1$ and $\res_{x_\infty}(\eta^\mathrm{Fil}_{W_\Delta})=-1$. 
 
Similarly, the class $\eta^{\Frob}_{W_\Delta}$ is an element in $\mathbf{D}_\mathrm{cris}(W_\Delta)^{\varphi=1}$ with $\rho_\mathrm{cris}(\eta^{\Frob}_{W_\Delta})=1$,
where $\rho_\mathrm{cris}$ is the top right arrow map in the following commutative diagram 
\[\xymatrix{
0\ar[r] & 
\mathbf{D}_\mathrm{cris}(V_{\mathcal{F}}^*(1))\ar[r]\ar[d]^-{\simeq}& 
\mathbf{D}_\mathrm{cris}(W_\Delta)\ar[r]^-{\rho_\mathrm{cris}}\ar[d] &
L_0\otimes_{\Q_p}F_{\mathcal{F}}\ar[r]\ar[d]^-{\Delta}&0\\
0\ar[r]&
H^1_\text{log-cris}({X}_m/L_0)[\mathcal{F}](1)\ar[r]&
H^1_\text{log-cris}(Y_S/L_0)[\mathcal{F}](1)\ar[r]^-{\oplus\res_Q} &
(L_0\otimes_{\Q_p}F_{\mathcal{F}})_0^{S}
\ar[r]
&0
}\]  which realizes the exact sequence in the top horizontal line (which is \eqref{crist}) as the pull-back of the bottom horizontal line with respect to the rightmost $L_0\otimes_{\Q_p}F_\mathcal{F}$-linear vertical map $\Delta$ taking $1$ to $(P,-x_\infty)$; as before in the bottom horizontal arrow, 
$\res_Q(\omega)$ is the residue at $Q\in S$ of the differential form $\omega$, 
and $(L\otimes_{\Q_p}F_\mathcal{F})^S_0$ denotes the degree zero divisors over $S$ with coefficients in $L\otimes_{\Q_p}F_\mathcal{F}$. By the discussion closing \S\ref{sec:9.1.2} (see especially \eqref{eq:9.3}), $\eta^{\Frob}_{W_\Delta}$ is represented by a pair of sections 
$(\eta_\infty^{\Frob},\eta_0^{\Frob})$ of 
$\Omega^1_{\rig}(\widetilde{\mathcal{W}}_\infty)\times \Omega^1_\rig(\widetilde{\mathcal{W}}_0)$. 
Since $\eta^{\Frob}_{W_\Delta}$ is fixed by $\varphi$, we have  
$\eta_\infty^{\Frob}=\phi_\mathrm{DT}\eta_\infty^{\Frob}+dG_\infty$ for a rigid analytic function $G_\infty$ on $\widetilde{\mathcal{W}}_\infty$, 
and $\eta_0^{\Frob}=\tilde{\phi}_\mathrm{DT}\eta_0^{\Frob}+dG_0$ for a rigid analytic function $G_0$ on $\widetilde{\mathcal{W}}_0$. Moreover, we also have $\res_{Q}(\eta_{W_\Delta}^{\Frob})=\mathrm{res}_Q(\eta_{W_\Delta}^{\mathrm{Fil}})$ for all $Q\in S$, and since 
$\res_{Q}(\eta_{W_\Delta}^{\Frob})=\res_{\mathcal{V}_Q}(\eta_{W_\Delta}^{\Frob})$ for all $Q\in S$, we may rewrite the last condition in the form  
$\res_{\mathcal{V}_Q}(\eta_{W_\Delta}^{\Frob})=\mathrm{res}_Q(\eta_{W_\Delta}^{\mathrm{Fil}})$ for all $Q\in S$. 

\emph{Step 2.} (\emph{Cf.} \cite[Lemma 3.20]{BDP}.) We now show that 
\begin{equation} \label{9.9step1} 
\sum_{\mathcal{V}\subseteq\widetilde{\mathcal{W}}_\infty}\res_{\mathcal{V}}(\langle F_\infty^*,\eta_{\infty}^{\mathrm{Frob}}\rangle_\dR) +\sum_{\mathcal{V}\subseteq\widetilde{\mathcal{W}}_0}\res_{\mathcal{V}}(\langle F_0^*,\eta_{0}^{\mathrm{Frob}}\rangle_\dR)=0.\end{equation}
We begin by showing that the first summand in \eqref{9.9step1} is zero. 
Recall $\eta_\infty^{\Frob}=\phi_\mathrm{DT}\eta_\infty^{\Frob}+dG_\infty$. By the Leibeniz rule we then have 
\[d(\langle \phi_\mathrm{DT} F_\infty^*,G_\infty\rangle_\dR)=\langle\phi_\mathrm{DT} F_\infty^*,dG_\infty\rangle_\dR+\langle \phi_\mathrm{DT}\omega_{\mathcal{F}^*},G_\infty\rangle_\dR\] where we use that $d(\phi_\mathrm{DT} F^*_\infty)=\phi_\mathrm{DT} d F_\infty^*$ because $\phi_\mathrm{DT}$ is horizontal for $d$. Therefore, the RHS is exact on each $\mathcal{V}$, so we have 
$\res_\mathcal{V}(\langle\phi_\mathrm{DT} F_\infty^*,dG_\infty\rangle_\dR)=-\res_\mathcal{V}(\langle \phi_\mathrm{DT}\omega_{\mathcal{F}^*},G_\infty\rangle_\dR)$; on the other hand, $\langle \phi_\mathrm{DT}\omega_{\mathcal{F}^*},G_\infty\rangle_\dR$ is a rigid analytic differential form on $\widetilde{\mathcal{W}}_\infty$, so the sum of its residues is zero for all $\mathcal{V}$. We conclude that 
\begin{equation}\label{eq:9.9} 
\sum_{\mathcal{V}\subseteq\widetilde{\mathcal{W}}_\infty}\res_\mathcal{V}(\langle\phi_\mathrm{DT} F_\infty^*,dG_\infty\rangle_\dR)=0.\end{equation} We then observe that 
$\res_{\mathcal{V}}(\langle F_\infty^*,\eta_{\infty}^{\mathrm{Frob}}\rangle_\dR)=
\res_{\mathcal{V}}(\langle \phi_\mathrm{DT} F_\infty^*,\phi_\mathrm{DT}\eta_{\infty}^{\mathrm{Frob}}\rangle_\dR)$; combing this with the equation $\eta_\infty^{\Frob}=\phi_\mathrm{DT}\eta_\infty^{\Frob}+dG_\infty$ and the equation \eqref{eq:9.9} we conclude that 
\[\sum_{\mathcal{V}\subseteq\widetilde{\mathcal{W}}_\infty}\res_{\mathcal{V}}(\langle F_\infty^*,\eta_{\infty}^{\mathrm{Frob}}\rangle_\dR)=\sum_{\mathcal{V}\subseteq\widetilde{\mathcal{W}}_\infty}\res_{\mathcal{V}}(\langle \phi_\mathrm{DT} F_\infty^*,\eta_{\infty}^{\mathrm{Frob}}\rangle_\dR).\] It follows that 
\[\Pi(1)\sum_{\mathcal{V}\subseteq\widetilde{\mathcal{W}}_\infty}\res_{\mathcal{V}}(\langle F_\infty^*,\eta_{W_\Delta}^{\mathrm{Frob}}\rangle_\dR)=\sum_{\mathcal{V}\subseteq\widetilde{\mathcal{W}}_\infty}\res_{\mathcal{V}}(\langle \Pi(\phi_\mathrm{DT} )F_\infty^*,\eta_{\infty}^{\mathrm{Frob}}\rangle_\dR).\]
Now $\Pi(\phi_\mathrm{DT})F_\infty^*$ is rigid analytic, and therefore the RHS is zero; since $\Pi(1)\neq0$, we conclude that 
\[\sum_{\mathcal{V}\subseteq\widetilde{\mathcal{W}}_\infty}\res_{\mathcal{V}}(\langle F_\infty^*,\eta_{W_\Delta}^{\mathrm{Frob}}\rangle_\dR)=0.\] A similar argument, replacing $\widetilde{\mathcal{W}}_\infty$ with $\widetilde{\mathcal{W}}_0$, $\eta_\infty$ with $\eta_0$, $G_\infty$ with $G_0$, $F_\infty^*$ with $F_0^*$ and $\phi_\mathrm{DT}$ by $\tilde\phi_\mathrm{DT}$ shows that 
\[\sum_{\mathcal{V}\subseteq\widetilde{\mathcal{W}}_0}\res_{\mathcal{V}}(\langle F_0^*,\eta_{0}^{\mathrm{Frob}}\rangle_\dR)=0\] and \eqref{9.9step1} follows.

\emph{Step 3.} (\emph{Cf.} \cite[Lemma 3.19]{BDP}.) We now show that 
\begin{equation}\label{9.9step2}
\sum_{\mathcal{V}\subseteq\widetilde{\mathcal{W}}_\infty}\res_\mathcal{V}(F^*_\infty\eta_{\infty}^{\mathrm{Fil}})+\sum_{\mathcal{V}\subseteq\widetilde{\mathcal{W}}_0}\res_\mathcal{V}(F^*_0\eta_{0}^{\mathrm{Fil}})=F^*_\infty(P).\end{equation}
Since $F_\infty^*$ vanishes at $x_\infty$, $F^*_\infty\eta^\mathrm{Fil}_{\infty}$ is locally analytic in a neighborhood of $x_\infty$, and it follows that $\res_{x_\infty}(F^*_\infty\eta^\mathrm{Fil}_{\infty})=0$. 
On the other hand, since $\res_P(\eta^\mathrm{Fil}_{W_\Delta})=1$, we have $\res_P(F_\infty^*\eta^{\mathrm{Fil}}_{W_\Delta})=F_\infty^*(P)$, so we conclude that 
\[\sum_{\mathcal{V}\subseteq\widetilde{\mathcal{W}}_\infty}\res_\mathcal{V}(F^*_\infty\eta_{\infty}^{\mathrm{Fil}})=F^*_\infty(P).\]
On the other hand, $F_0^*\eta_{W_\Delta}^\mathrm{Fil}$ is analytic on $\mathcal{W}_0$, so the second summand in the LHS of \eqref{9.9step2} is zero, and \eqref{9.9step2} follows. 

\emph{Step 4.} The result now follows combining \eqref{9.9step1} and \eqref{9.9step2} with \eqref{pairing-residues} and using that, since $m>1$, the wide opens $\widetilde{\mathcal{W}}_\infty$ 
and $\widetilde{\mathcal{W}}_0$ are disjoint.  
\end{proof}

\begin{corollary}\label{CoroCole} Let $\Delta=(P)-(x_\infty)$ and $F_\infty^*$ the Coleman primitive of $\omega_{\mathcal{F}^*}$ on $\mathcal{W}_\infty$ which vanishes at $\infty$. Assume that $m>1$. Then 
$\delta_m(\Delta)(\omega_{\mathcal{F}^*})=F_{\omega_{\mathcal{F}^*}}(P)$. 
\end{corollary}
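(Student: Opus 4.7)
The plan is to simply chain together the two preceding lemmas, since the corollary is essentially a cosmetic reformulation.

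First, observe that $\Delta = (P) - (x_\infty) = j_m^{(x_\infty)}(P)$ by the very definition of the map $j_m^{(x_\infty)}$ introduced before Lemma~\ref{Col6}. Hence the class $\Delta$ appearing in the statement of the corollary is exactly the class to which Lemma~\ref{Col6} applies.

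Next, apply Lemma~\ref{Col5} to this $\Delta$: this yields
\[
\delta_m(\Delta)(\omega_{\mathcal{F}^*}) = \langle \eta_{W_\Delta}, \omega_{\mathcal{F}^*}\rangle_\dR .
\]
Under the hypothesis $m > 1$, Lemma~\ref{Col6} identifies the right-hand side with the value of the Coleman primitive at $P$, namely
\[
\langle \eta_{W_\Delta}, \omega_{\mathcal{F}^*}\rangle_\dR = F_{\omega_{\mathcal{F}^*}}(P),
\]
where $F_{\omega_{\mathcal{F}^*}}$ denotes the Coleman primitive $F_\infty^*$ of $\omega_{\mathcal{F}^*}$ on $\mathcal{W}_\infty$ normalized to vanish at $x_\infty$ in the sense of Definition~\ref{defnormCol}. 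Combining the two displayed equalities gives the desired statement. There is no genuine obstacle here; all of the work is done in Lemma~\ref{Col6}, whose proof in turn reduces the computation of $\langle \eta_{W_\Delta},\omega_{\mathcal{F}^*}\rangle_\dR$ to a residue computation via \eqref{pairing-residues} after separately showing that the Frobenius part of $\eta_{W_\Delta}$ contributes zero (Step~2) and the filtration part contributes $F_\infty^*(P)$ (Step~3), using in the final step that the wide opens $\widetilde{\mathcal{W}}_\infty$ and $\widetilde{\mathcal{W}}_0$ are disjoint when $m>1$.
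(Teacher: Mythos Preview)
Your proposal is correct and follows exactly the paper's approach: the paper's proof simply states that the corollary follows immediately from Lemma~\ref{Col5} and Lemma~\ref{Col6}. Your additional remarks identifying $\Delta = j_m^{(x_\infty)}(P)$ and summarizing the structure of the proof of Lemma~\ref{Col6} are accurate but not needed for the corollary itself.
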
 
\begin{proof}
This follows immediately from Lemma \ref{Col5} and Lemma \ref{Col6}. 
\end{proof}

 \section{Reciprocity laws} 

\subsection{Analytic $p$-adic $L$-function} Let $c\mathcal{O}_K$, with $c\geq 1$ and $p\nmid c$, be the conductor of $x \mapsto\lambda(x)\lambda^{-1}(\bar{x})$.
Consider the CM points $x(\mathfrak{a})$ with $\mathfrak{a} \in \Pic(\mathcal{O}_c)$, defined in \S \ref{sec.CMpoints}.
Recall that $x(\mathfrak{a})$ has a model defined over ${\Z_p^\unr}$, and define 
the fiber product $x(\mathfrak{a})_{{\I}} \defeq x(\mathfrak{a}) \otimes_{\Z_p^\unr}\widetilde{\I}$, where $\widetilde{\I}\defeq \I\otimes_{\Z_p}\Z_p^{\unr}$.
Define now a $\widetilde{\I}$-valued measure $\mu_{\omega_\I,\mathfrak{a}}$ on $\Z_p$ by
\begin{equation*} \label{mu_a}
\int_{\Z_p} (T_{x(\mathfrak{a})}+1)^x d\mu_{\omega_\I,\mathfrak{a}}(x) = 
\boldsymbol{\mathcal{F}}^{[p]} \left( (T_{x(\mathfrak{a})}+1)^{\mathrm{N}(\mathfrak{a}^{-1})\sqrt{(-D_K)}^{-1}}\right) \in \widetilde{\I}[[T_{x(\mathfrak{a})}]],
\end{equation*}
where 
$\boldsymbol{\mathcal{F}}^{[p]}(T_x)=\sum_{p\nmid n}a_nT_{x(\mathfrak{a})}^n$ is the $p$-depletion of $\boldsymbol{\mathcal{F}}(T_x)=\sum_{n\geq 1}a_nT_{x(\mathfrak{a})}^n$, and for an ideal $\mathfrak{a}\subseteq\mathcal{O}_c$, we define $\mathrm{N}(\mathfrak{a})=c^{-1}\cdot\sharp(\mathcal{O}_c/\mathfrak{a})$.
The $p$-adic $L$-function associated with $\omega_\I$ and $\boldsymbol{\xi}$ is the $\widetilde{\I}$-valued measure on  
$\Gamma_\infty=\Gal(H_{cp^{\infty}}/K)$ given for any continuous function $\varphi\colon\Gamma_\infty\rightarrow \widetilde{\I}$ by 
\[
\mathscr{L}_{\I,\boldsymbol{\xi}}^\mathrm{an} (\varphi) = \sum_{\mathfrak{a} \in \Pic(\mathcal O_{c})}
{\boldsymbol{\chi}}^{-1}
{\boldsymbol{\xi}}(\mathfrak{a})
\mathrm{N}(\mathfrak{a})^{-1}
	\int_{\Z_p^{\times}} 
	(\varphi \big|[\mathfrak{a}])(u)d\mu_{\omega_\I,\mathfrak{a}}(u).
	\]
For each arithmetic morphism $\nu\colon\I\rightarrow\mathcal{O}_\nu$, the choice of $\omega_\I$ determines 
a modular form $\mathcal{F}_\nu$. If $\mathscr{L}_{\mathcal{F}_\nu,\xi_\nu}$ is the $p$-adic $L$-function attached to $\mathcal{F}_\nu$ and $\xi_\nu$ and constructed in \cite{Magrone}, then the main result of \cite{LMW} gives 
\begin{equation}\label{int-1}
\mathscr{L}_{\I,\boldsymbol{\xi}}^\mathrm{an} (\nu)=\vartheta_\nu^{-1}(c)\mathscr{L}_{\mathcal{F}_\nu,\xi_\nu}.\end{equation}

We recall some results from \cite{Magrone} and \cite{BCK}.  
For any ideal $\mathfrak{a}\subseteq\mathcal{O}_c$,
 any continuous function  $\phi\colon\Z_p^\times\rightarrow\mathcal{O}_{\C_p}$ and any power series $G(T_{x(\mathfrak{a})}) \in W[[T_{x(\mathfrak{a})}]]$, define the formal power series 
$([\phi]G)(T_{x(\mathfrak{a})})\in \Z_p^\unr(\phi)[[T_{x(\mathfrak{a})}]]$, where $\Z_p^\unr(\phi)$ is the extension of $\Z_p^\unr$ generated by the values of $\phi$, 
by the formula 
\begin{equation}
\label{faphi}([\phi]G)(T_{x(\mathfrak{a})})=
\int _{\Z_p^\times} \phi(x)(T_{x(\mathfrak{a})}+1)^x d\mu_{\omega_\I,\mathfrak{a}}(x).
\end{equation}
If $\mathcal{F}$ is a quaternionic newform for $X_1$ or the $p$-stabilization of a newform for $X_0$ (meaning it is the Jacquet--Langlands lift of a level $\Gamma_0(Np)$ elliptic newform or the $p$-stabilization of a level $\Gamma_0(N)$ elliptic newform), define \[\mathcal{F}^{[p]}_\mathfrak{a}(T_{x(\mathfrak{a})}) \defeq \mathcal{F}^{[p]}\left((T_{x(\mathfrak{a})} + 1)^{\mathrm{N}(\mathfrak{a})^{-1}\sqrt{-D_K}^{-1}}\right).\]
By \cite[Proposition 4.5]{Magrone} (see also \cite[Proposition 4.1]{BCK}), if $\phi\colon(\Z/p^n\Z)^\times\rightarrow\overline\Q_p^\times$ is a primitive Dirichlet character, and $[\mathfrak{a}]$ is an ideal class in $\Pic(\mathcal{O}_c)$ with $p\nmid c$ as before, we have 
\begin{equation}\label{mag4.5}
([\phi]\mathcal{F}^{[p]}_\mathfrak{a})(0) 
=p^{-n}\mathfrak{g}(\phi)\sum_{u\in (\Z/p^n\Z)^\times}\phi^{-1}(u)\mathcal{F}(x(\mathfrak{a})\star\mathbf{n}(u/p^n)),
\end{equation} where $\mathfrak{g}(\phi)$ is the Gauss sum of $\phi$.

Recall the point $x(\mathfrak{a})=[(\iota_K,a^{-1}\xi)]$ defined in \S\ref{sec.CMpoints}, which corresponds to the sequence 
$(x_m(\mathfrak{a}))_{m\geq 0}$ of Heegner points, each one in 
${X}_m(H_{cp^\infty})$. 
Fix an integer $n\geq 1$. For any $x$ in $\Q_p$, define the \emph{$\star$-action} of $\mathbf{n}(x)$ on the point 
$x(\mathfrak{a})$ by the formula
\[x(\mathfrak{a})\star\mathbf{n}(x)=[(\iota_K,a^{-1}\xi\mathbf{n}(x)]\]
where $\mathbf{n}(x)$
denotes the element in $\widehat{B}^\times$ whose $p$-component has image equal to $\smallmat{1}{x}{0}{1}$ in $\GL_2(\Q_p)$ via the isomorphism $i_p$
and whose components at other primes are trivial. 
A simple computation (see also \cite[page 587]{CH}) shows that for any $u\in\Z_p^\times$ we have 
\[\xi\cdot\mathbf{n}(u/p^n)=
i_\mathfrak{p}(u/p^n)\xi^{(n)}\cdot\smallmat{u^{-1}}{u^{-1}}01,
\] where $i_\mathfrak{p}(u/p^n)$ is the element of $\widehat{K}^\times$ having all components equal to $1$ 
except the $\p$-component, equal to $u/p^n$.
We thus obtain 
\begin{equation}\label{CHeq}
x(\mathfrak{a})\star \mathbf{n}(u/p^n)=[(\iota_K,a^{-1}\xi\mathbf{n}(u/p^n))]=
\left[\left(\iota_K,a^{-1}i_\mathfrak{p}(u/p^n)\xi^{(n)}\cdot\smallmat{u^{-1}}{u^{-1}}01
\right)\right].\end{equation}
By \cite[Proposition 4.1]{BCK}, for any $u\in\Z_p^\times$, $(x(\mathfrak{a})\star\mathbf{n}(u/p^n))$ is still a CM point defined over $\mathcal{Z}=\Z^{\unr}\cap K^{\mathrm{ab}}$, where $K^\mathrm{ab}$ is the maximal abelian extension of $K$. Moreover, we have 
$(x(\mathfrak{a})\star\mathbf{n}(u/p^n))\otimes_\mathcal{Z}\overline{\F}_p=\bar{x}(\mathfrak{a})$ and 
\[t_{x(\mathfrak{a})}(x(\mathfrak{a})\star\mathbf{n}(u/p^n))=\zeta_{p^n}^{-u\mathrm{N}(\mathfrak{a}^{-1})\sqrt{-D_K}^{-1}},\] where, in the notation of \cite{BCK}, $t_{x(\mathfrak{a})}=T_{x(\mathfrak{a})}+1$.
 
 \subsection{Weight $2$ specializations} Let $\nu$ be an arithmetic morphisms of signature $(2,\psi)$ and let the conductor of $\psi$ be $p^m$ for some integer $m\geq1$. Let $\hat{\phi}\colon K^\times\backslash\widehat{K}^\times\rightarrow F^\times$
be the $p$-adic avatar of a Hecke character $\phi\colon K^\times\backslash\A_K^\times\rightarrow\overline\Q^\times$ of infinity type $(1,-1)$ and conductor $p^n$ for some integer $n\geq m$ such that the Galois character $\widetilde{\phi}\colon \Gal(K^\mathrm{ab}/K)\rightarrow F^\times$ factors through $\widetilde{\Gamma}_\infty$.
The next task consists in computing the $(\nu,\hat\phi^{-1})$-specialization of $\mathscr{L}_{\I,\boldsymbol{\xi}}^{\mathrm{alg}}$. We put 
\[\mathscr{L}_{\I,\boldsymbol{\xi}}^{\mathrm{alg}}(\nu,\hat\phi^{-1})=\mathrm{sp}_{\nu,\phi}\left(\mathscr{L}_{\I,\boldsymbol{\xi}}^{\mathrm{alg}}\right).\]
For a number field $L$ and the ring of algebraic integers $\mathcal{O}$ of a finite extension of $\Q$ there is a canonical exact sequence 
\[0\longrightarrow{J}_m(L)\otimes_\Z\mathcal{O} \longrightarrow\Pic({X}_m/L )\otimes_\Z\mathcal{O} \overset{\deg}\longrightarrow \mathcal{O} \longrightarrow 0\]
and taking ordinary parts, since the degree of $U_p$ is $p$, we obtain a canonical isomorphism
\begin{equation}\label{eq9.14}{J}_m(L)^\ord\otimes_\Z\mathcal{O}\stackrel{\sim}{\longrightarrow} 
\Pic({X}_m/L)^\ord\otimes_\Z\mathcal{O}.\end{equation} 
We denote $\varrho_m$ the inverse of this canonical isomorphism. 
Consider the divisor 
\[Q_{cp^n,m}=
\sum_{\sigma\in \Gal(H_{cp^{n+m}}/H_{cp^n})}{P}_{cp^{n+m},m}^{\tilde\sigma}\otimes \chi_\nu(\tilde\sigma) 
\]where $\tilde\sigma\in \Gal(L_{cp^{n+m},m}/H_{cp^n})$ is any lift of $\sigma$ (the independence of the lift follows from the results recalled in \ref{CM section}). 
This defines a canonical class $\varrho_m(Q_{cp^n,m})$ in
${J}_m(\overline\Q)\otimes_{\Z}\mathcal{O}_\nu(\chi_\nu)$, which is fixed by the action of $\Gal(\overline\Q/L_{cp^{n+m},m})$. Tracing through the definition of big Heegner points, we see (\emph{cf.}  \cite[\S3.4]{LV-Pisa}, see especially \cite[(3.6)]{LV-Pisa}) that when $n\geq m\geq 2$, 
\begin{equation}\label{eq13.2}
\varrho_m(Q_{cp^n,m})=\left(\frac{\nu(\mathbf{a}_p)}{p}\right)^m\cdot \mathrm{sp}_\nu(\mathfrak{X}_{cp^n}). 
\end{equation} 

Recall the overconvergent modular form  $d^{-1}\mathcal{F}_{\nu, x_\infty}^{[p]}$ in Definition \ref{def9.6} for $\mathcal{F}=\mathcal{F}_\nu$. 

\begin{theorem}\label{thm13.5} Let $\nu$ be an arithmetic morphism with signature $(2,\psi)$, where $\mathrm{cond}(\psi)=p^m$ and $m\geq 2$, and $\phi\colon K^\times\backslash\A_K^\times\rightarrow\overline\Q^\times$ be of infinity type $(1,-1)$ and $\mathrm{cond}(\phi)=p^n$ with $n\geq m$. 
Then 
\[\mathscr{L}_{\I,\boldsymbol{\xi}}^{\mathrm{alg}}(\nu,\hat\phi^{-1})=\frac{\epsilon(\phi)}
{{\xi}_{\nu,\mathfrak{p}}(p^n)\cdot p^n}\cdot\sum_{\mathfrak{a}\in\Pic(\mathcal{O}_{cp^n})}
(\hat\xi_\nu^{-1}\hat\chi_\nu\hat\phi)(a)d^{-1}\mathcal{F}_{\nu , x_\infty}^{[p]}\left(x_{cp^n,m}(\mathfrak{a}^{-1})\right)
.\] 
\end{theorem}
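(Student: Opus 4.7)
\medskip

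\noindent\textbf{Proof plan.} The plan is to unwind the definition of $\mathscr{L}_{\I,\boldsymbol{\xi}}^{\mathrm{alg}}$ through the big Perrin-Riou interpolation formula \eqref{prop12.2}, then translate the resulting Bloch--Kato logarithm on classical Heegner divisors into a Coleman-primitive evaluation, and finally rewrite that Coleman primitive as the overconvergent form $d^{-1}\mathcal{F}^{[p]}_{\nu,x_\infty}$. More precisely, by Definition \ref{geodef} we have
\[\mathscr{L}_{\I,\boldsymbol{\xi}}^{\mathrm{alg}}(\nu,\hat\phi^{-1})=\mathrm{sp}_{\nu,\hat\phi^{-1}}\!\left(\mathcal{L}^{\widetilde\Gamma_\infty}_{\omega_\I}(\res_\mathfrak{P}(\mathfrak{Z}_{\boldsymbol{\xi}}))\right),\]
and since $\hat\phi$ is of infinity type $(1,-1)$ with conductor $p^n$, $n\geq 1$, the relevant Hodge--Tate weight falls in the range of validity of \eqref{prop12.2}. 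Applying \eqref{prop12.2} yields
\[\mathscr{L}_{\I,\boldsymbol{\xi}}^{\mathrm{alg}}(\nu,\hat\phi^{-1})=\mathcal{E}_p(\hat\phi,\nu)\cdot (\omega_{\mathcal{F}_\nu}\otimes \hat\phi)\!\left(\log\!\left(\mathrm{sp}_{\nu,\hat\phi^{-1}}(\res_\mathfrak{P}(\mathfrak{Z}_{\boldsymbol{\xi}}))\right)\right),\] where the factor $\frac{(-1)^{-w-1}}{(-w-1)!}$ equals $1$ since $w=-1$, and the Euler-like factor reduces to $\epsilon(\hat\phi^{-1})\cdot\Phi_\nu^n$ as $\hat\phi$ has non-trivial conductor.

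The next step is to make explicit the specialization $\mathrm{sp}_{\nu,\hat\phi^{-1}}(\res_\mathfrak{P}(\mathfrak{Z}_{\boldsymbol{\xi}}))$. Since $\mathfrak{Z}_{\boldsymbol{\xi}}=\cor_{H_c/K}(\mathfrak{X}_{cp^\infty}\otimes\boldsymbol{\xi}^{-1})$ and $\hat\phi$ factors through $\widetilde\Gamma_\infty$, Shapiro's lemma together with the fact that $\hat\phi$ has conductor $p^n$ (with $n\geq m$) reduces the specialization to a weighted sum over $\Pic(\mathcal{O}_{cp^n})$ of the specialization at $\nu$ of the individual classes $\mathfrak{X}_{cp^n,m}^{\sigma_{\mathfrak{a}}}$. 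The character controlling this sum, after combining the twist by $\boldsymbol{\xi}^{-1}$, the $\hat\chi_\nu$ arising from the $\dagger$-twist (recall $\mathbf{T}^\dagger=\mathbf{T}\otimes\Theta^{-1}$), and the evaluation of $\hat\phi$, is precisely $\hat\xi_\nu^{-1}\hat\chi_\nu\hat\phi$. Using \eqref{eq13.2}, which identifies $\mathrm{sp}_\nu(\mathfrak{X}_{cp^n})$ with $(p/\nu(\mathbf{a}_p))^m\varrho_m(Q_{cp^n,m})$, the absorption of the factor $(\nu(\mathbf{a}_p)/p)^{m}$ against $\Phi_\nu^n$ produces the shape $\xi_{\nu,\mathfrak{p}}(p^n)^{-1}p^{-n}$ of the denominator (using $\Phi_\nu^n=\mathbf{a}_{p,\nu}^n\xi_{\nu,\mathfrak{p}}(p^n)^{-1}p^{-n}$ derived from $\Psi_\nu(\Frob_\p)=\mathbf{a}_{p,\nu}^{-1}\xi_{\nu,\mathfrak{p}}(p)p$), together with compensating $\mathbf{a}_{p,\nu}^{n-m}$ factors that cancel against those arising from the Hecke-eigenvalue action of $U_p^{n-m}$ used in smoothing to level $m$.

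The heart of the argument is the translation of the Bloch--Kato logarithm applied to each $\varrho_m(Q_{cp^n,m})$ into values of Coleman primitives. For this we invoke Corollary \ref{CoroCole}: for a degree-zero divisor $\Delta=(P)-(x_\infty)$ on $X_m$, the pairing $\delta_m(\Delta)(\omega_{\mathcal{F}^*})$ equals $F_{\omega_{\mathcal{F}^*}}(P)$, where $F_{\omega_{\mathcal{F}^*}}$ is the Coleman primitive normalized to vanish at $x_\infty$. Via the identity \eqref{Col-dminus1}, $F_\infty=\Pi(\phi_{\mathrm{DT}}^*)^{-1}d^{-1}\mathcal{F}^{[p]}_{x_\infty}$; crucially, when one evaluates at the CM points $x_{cp^n,m}(\mathfrak{a}^{-1})$ (whose reductions lie on $\mathtt{Ig}_\infty$ and thus in $\mathcal{W}_\infty$), one can remove $\Pi(\phi_{\mathrm{DT}}^*)^{-1}$ by the same mechanism as in \cite[Proposition 5.2]{Castella}. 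Using that the differential $\omega_{\mathcal{F}_\nu^*}$ attached to $\mathcal{F}_\nu^*$ corresponds to $\omega_{\mathcal{F}_\nu}$ via the twist by $\psi_\nu^{-1}$, and tracing the $\hat\phi$-twist through the CM sum, one arrives precisely at the asserted sum $\sum_{\mathfrak{a}} (\hat\xi_\nu^{-1}\hat\chi_\nu\hat\phi)(\mathfrak{a})\, d^{-1}\mathcal{F}^{[p]}_{\nu,x_\infty}(x_{cp^n,m}(\mathfrak{a}^{-1}))$.

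The hard part will be the bookkeeping of all numerical factors. In particular, one must carefully check: (i) the interplay between $\Phi_\nu^n$ coming from $\mathcal{E}_p(\hat\phi,\nu)$ and the $U_p$-normalization $\invlim_m U_p^{-m}$ used in the definition of $\mathfrak{X}_{cp^\infty}$, which should produce the clean factor $1/(\xi_{\nu,\mathfrak{p}}(p^n)\cdot p^n)$; (ii) that the $\epsilon$-factor appearing is $\epsilon(\phi)$ rather than $\epsilon(\hat\phi^{-1})$ (which reduces to a standard identity for $\epsilon$-factors of characters of finite order); (iii) that the character twists from the $\dagger$-twist, the tensor product with $\boldsymbol{\xi}^{-1}$, the Shapiro isomorphism for $\widetilde\Gamma_\infty$, and the descent from $L_{cp^{n+m},m}$ to $H_{cp^n}$ all combine into exactly $\hat\xi_\nu^{-1}\hat\chi_\nu\hat\phi$; this requires careful use of Shimura's reciprocity law from \S\ref{sec.CMpoints} and of the compatibility properties recalled in \S\ref{CM section}. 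Once these identifications are made, the theorem follows by assembling the pieces.
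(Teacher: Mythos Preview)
Your plan follows essentially the same route as the paper's proof: apply \eqref{prop12.2}, unwind $\mathfrak{Z}_{\boldsymbol{\xi}}$ via Shapiro and \eqref{eq13.2}, convert logarithms to Coleman primitives via Corollary \ref{CoroCole}, and pass from $F_\infty^*$ to $d^{-1}\mathcal{F}_{\nu,x_\infty}^{[p]}$ via \eqref{Col-dminus1}. Two points in your bookkeeping are slightly off and would trip you up if left as written.

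First, the passage from $F_\infty^*$ to $d^{-1}\mathcal{F}^{[p]}_{\nu,x_\infty}$ is not achieved by ``removing $\Pi(\phi_{\mathrm{DT}}^*)^{-1}$'' abstractly. What the paper does is observe that $\phi_{\mathrm{DT}}(P_{cp^{n+m},m}^\sigma)$ is defined over the smaller field $H_{cp^{n-1}}(\zeta_{p^n})$, so that, since $\chi_\nu$ is \emph{primitive} modulo $p^n$, the character sum $\sum_\sigma\varphi(\sigma)F_\infty^*(\phi_{\mathrm{DT}}(P^\sigma))$ vanishes; this lets one replace $F_\infty^*$ by $\Pi(\phi_{\mathrm{DT}}^*)F_\infty^*=d^{-1}\mathcal{F}^{[p]}_{\nu,x_\infty}$ for free inside the sum. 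You should make this vanishing explicit rather than deferring to \cite{Castella}.

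Second, your accounting of $U_p$-powers is misaligned. There is no $U_p^{n-m}$ in the argument. The three separate contributions are: (a) $\nu(\mathbf{a}_p)^{-n}$ from the normalization $\mathfrak{X}_{cp^\infty}=\invlim U_p^{-n}\mathfrak{X}_{cp^n}$; (b) $(p/\nu(\mathbf{a}_p))^m$ from \eqref{eq13.2}; and (c) $(\nu(\mathbf{a}_p)/p)^m$ from the identity $P_{cp^{n+m},m}=U_p^m\cdot x_{cp^n,m}(1)$ together with $U_pF_\infty^*=(\nu(\mathbf{a}_p)/p)F_\infty^*$. Contributions (b) and (c) cancel exactly, and (a) cancels against the $\nu(\mathbf{a}_p)^n$ hidden in $\Phi_\nu^n$, leaving precisely $\epsilon(\phi)/(\xi_{\nu,\mathfrak{p}}(p^n)\cdot p^n)$.
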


\begin{proof} We first relate $\mathscr{L}_{\I,\boldsymbol{\xi}}^{\mathrm{alg}}(\nu,\hat\phi^{-1})$ to the Coleman primitive. 
Since $\hat\phi\colon \Gamma_\infty\rightarrow\overline\Q_p$ 
has Hodge--Tate weight $w=1$ (so $\hat{\phi}^{-1}$ has Hodge--Tate weight $w=-1$) and conductor $n>1$, 
from  \eqref{prop12.2} we have 
\begin{equation}\label{13.5eq1}
\begin{split}
\mathscr{L}_{\I,\boldsymbol{\xi}}^{\mathrm{alg}}(\nu,\hat\phi^{-1})
&=
\mathrm{sp}_{\nu,\hat\phi}\left(\mathcal{L}_{\omega_\I}^{\widetilde{\Gamma}_\infty}(\res_\mathfrak{P}(\mathfrak{Z}_{\boldsymbol{\xi}}))\right)\\
&=
\mathcal{E}(\hat\phi^{-1},\nu)
\cdot
(\omega_{\mathcal{F}_\nu}\otimes\phi^{-1})\left(\log(\mathrm{sp}_{\nu,\hat\phi}(\res_{\mathfrak{P}}(\mathfrak{Z}_{\boldsymbol{\xi}}))\right).\end{split}
\end{equation} 
Using that the characters $\xi_\nu$ and $\phi$ has conductors $p^m$ and $p^n$ respectively, and $n\geq m$, by  \eqref{eq13.2} we have
{\footnotesize{\begin{equation}\label{13.5eq2}
\begin{split}
\mathscr{L}_{\I,\boldsymbol{\xi}}^{\mathrm{alg}}(\nu,\hat\phi^{-1})
&=
\mathcal{E}(\hat\phi^{-1},\nu)\sum_{\sigma\in\Gal(H_{cp^n}/H_c)}(\hat\xi_\nu^{-1}\hat\phi^{-1})(\sigma)
\log(\mathrm{sp}_{\nu}(\res_{\mathfrak{p}}(\cor_{H_c/K}(\mathfrak{X}_{cp^\infty}^\sigma))))(\omega_{\mathcal{F}_\nu^*})
\\
&=
\mathcal{E}(\hat\phi^{-1},\nu)\sum_{\sigma\in\Gal(H_{cp^n}/K)}(\hat\xi_\nu^{-1}\hat\phi^{-1})(\sigma)
\log(\mathrm{sp}_{\nu}(\res_{\mathfrak{p}}(\mathfrak{X}_{cp^\infty}^\sigma)))(\omega_{\mathcal{F}_\nu^*})
\\
&=
\mathcal{E}(\hat\phi^{-1},\nu)\sum_{\sigma\in\Gal(H_{cp^n}/K)}\nu(\mathbf{a}_p)^{-n}(\hat\xi_\nu^{-1}\hat\phi^{-1})(\sigma)
\log(\mathrm{sp}_{\nu}(\res_{\mathfrak{p}}(\mathfrak{X}_{cp^n}^\sigma)))(\omega_{\mathcal{F}_\nu^*})
\\
&=
\mathcal{E}(\hat\phi^{-1},\nu)\left(\frac{p}{\nu(\mathbf{a}_p)}\right)^m\sum_{\sigma\in\Gal(H_{cp^n}/K)}
\nu(\mathbf{a}_p)^{-n}(\hat\xi_\nu^{-1}\hat\phi^{-1})(\sigma)\log(\varrho_m(Q_{cp^{n},m}^\sigma))(\omega_{\mathcal{F}_\nu^*})
\\
&=
\mathcal{E}(\hat\phi^{-1},\nu)\left(\frac{p}{\nu(\mathbf{a}_p)}\right)^m\sum_{\sigma\in\Gal(H_{cp^n}/K)}\nu(\mathbf{a}_p)^{-n}
(\hat\xi_\nu^{-1}\hat\chi_\nu\hat\phi^{-1})(\sigma)\log(\varrho_m( P_{cp^{n+m},m}^\sigma))(\omega_{\mathcal{F}_\nu^*}).
\end{split}\end{equation}}}
Let $F_\infty^*$ be the Coleman primitive of $\omega_{\mathcal{F}_\nu^*}$ on $\mathcal{W}_\infty(p^m)$ which vanishes at $x_\infty$. 
It follows from \eqref{eq9.14} that 
\[\log(\varrho_m( P_{cp^{n+m},m}^\sigma))=\log(j_m( P_{cp^{n+m},m}^\sigma)).\]
Applying Corollary \ref{CoroCole} (and using linearity) we thus obtain 
\[\mathscr{L}_{\I,\boldsymbol{\xi}}^{\mathrm{alg}}(\nu,\hat\phi^{-1})=\mathcal{E}(\hat\phi^{-1},\nu)\left(\frac{p}{\nu(\mathbf{a}_p)}\right)^m\sum_{\sigma\in\Gal(H_{cp^n}/K)}\nu(\mathbf{a}_p)^{-n}
(\hat\xi_\nu^{-1}\hat\chi_\nu\hat\phi^{-1})(\sigma)F_\infty^*({P}_{cp^{n+m},m}^\sigma).\]
On the other hand, since ${P}_{cp^{n+m},m}$ is defined over the subfield $H_{cp^{n-1}}(\zeta_{p^n})$ of $L_{cp^n}$, and $\chi_\nu$ is a primitive character modulo $p^n$,
we see that, after setting $\varphi=\nu(\mathbf{a}_p)^{-n}\hat\xi_\nu^{-1}\hat\chi_\nu\hat\phi^{-1}$ to simplify the notation,  
\begin{equation} \begin{split}
\sum_{\sigma}
\varphi(\sigma)F_\infty^*({P}_{cp^{n+m},m}^\sigma)&= 
\sum_{\sigma}
\varphi(\sigma)F_\infty^*({P}_{cp^{n+m},m}^\sigma)-\frac{\nu(\mathbf{a}_p)}{p}\sum_\sigma\varphi(\sigma)F^*_\infty(\phi({P}_{cp^{n+m},m}^\sigma))\\
&=\sum_{\sigma}
\varphi(\sigma)\Pi(\phi^*)F_\infty^*({P}_{cp^{n+m},m}^\sigma)\\
&=\sum_{\sigma}
\varphi (\sigma) d^{-1}{\mathcal{F}_{\nu , x_\infty}^{[p]}}({P}_{cp^{n+m},m}^\sigma)\\
\end{split}\end{equation}
where the sum is over all $\sigma\in\Gal(H_{cp^n}/K)$, and the last equation follows from \eqref{Col-dminus1} and the fact that $d^{-1}\omega_{\mathcal{F}^{[p]}}=d^{-1}\omega_{{\mathcal{F}^*}^{[p]}}$. Therefore, 
{\footnotesize{
\begin{equation}\label{13.5eq4}\mathscr{L}_{\I,\boldsymbol{\xi}}^{\mathrm{alg}}(\nu,\hat\phi^{-1})=\mathcal{E}(\hat\phi^{-1},\nu)\left(\frac{p}{\nu(\mathbf{a}_p)}\right)^m\cdot\sum_{\sigma\in\Gal(H_{cp^n}/K)}\nu(\mathbf{a}_p)^{-n}(\hat\xi_\nu^{-1}\hat\chi_\nu\hat\phi^{-1})(\sigma)d^{-1}{\mathcal{F}_{\nu, x_\infty}^{[p]}}({P}_{cp^{n+m},m}^\sigma).\end{equation} }}
We now observe that 
\begin{equation}\label{13.5eq3} U_pF_\infty^*=\left(\frac{\nu(\mathbf{a}_p)}{p}\right)F_\infty^*.\end{equation}
Since ${P}_{cp^{n+m},m}=U_p^m{P}_{cp^n,m}=U_p^m x_{cp^n,m}(1)$, it follows from \eqref{13.5eq3} and \eqref{13.5eq4} that 
(use Shimura's reciprocity law to keep trace of the Galois action) 
\begin{equation}\label{13.5eq5}\mathscr{L}_{\I,\boldsymbol{\xi}}^{\mathrm{alg}}(\nu,\hat\phi^{-1})=\mathcal{E}(\hat\phi^{-1},\nu)\sum_{\sigma\in\Gal(H_{cp^n}/K)}\nu(\mathbf{a}_p)^{-n}(\hat\xi_\nu^{-1}\hat\chi_\nu\hat\phi^{-1})(\sigma)d^{-1}{\mathcal{F}_{\nu , x_\infty}^{[p]}}(x_{cp^{n},m}(1)^\sigma).\end{equation}
Since $\Phi_\nu=\nu(\mathbf{a}_p)({\xi}_{\nu, \mathfrak{p}}(p)p)^{-1}$, we have 
\[
\mathcal{E}(\hat\phi^{-1},\nu)
=\frac{\epsilon(\phi)\nu(\mathbf{a}_p)^n}
{{\xi}_{\nu,\mathfrak{p}}(p^n)\cdot p^n}
\] and the result follows. 
\end{proof}

 \subsection{Recipocity law} \label{RL}
Fix an algebraic Hecke character $\lambda\colon K^\times\backslash\A^\times_K\rightarrow\overline\Q^\times$ as in \S\ref{sec.BigCharacters} and consider the family of Hecke characters $\boldsymbol{\xi}$ obtained from $\lambda$. 
We fix $\nu$ and $\phi$ as in the proof of Theorem \ref{thm13.5}; so $\nu$ is an arithmetic morphism of signature $(2,\psi)$ with $\mathrm{cond}(\psi)=p^m$ for some integer $m\geq2$, and $\hat{\phi}$
is the $p$-adic avatar of a Hecke character $\phi\colon K^\times\backslash\A_K^\times\rightarrow\overline\Q^\times$ of infinity type $(1,-1)$ and conductor $p^n$ for some integer $n\geq m$, so the associated Galois character $\tilde{\phi}$ factors through $\widetilde{\Gamma}_\infty$.

\begin{proposition}\label{prop14.1}
Let $\nu$ and $\phi$ be as before. Then
\[\mathscr{L}_{\I,\boldsymbol{\xi}}^{\mathrm{alg}}(\nu,\hat{\phi}^{-1})=\left(\frac{\phi_\p(-1)}
{\sqrt{-D_K}}\right) \mathscr{L}^\mathrm{an}_{\I,\boldsymbol{\xi}}(\nu,\hat{\phi}^{-1}).\] 
\end{proposition}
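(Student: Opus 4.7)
The strategy is to compute both sides explicitly and match them term by term. The algebraic side has already been unfolded in Theorem \ref{thm13.5} as a finite sum of values of the overconvergent form $d^{-1}\mathcal{F}_{\nu,x_\infty}^{[p]}$ at the Heegner points $x_{cp^n,m}(\mathfrak{a}^{-1})$, twisted by $\hat\xi_\nu^{-1}\hat\chi_\nu\hat\phi$. The plan is to unfold the analytic side using its defining formula and \eqref{mag4.5}, then reindex the resulting double sum over $\Pic(\mathcal{O}_c)\times(\Z/p^n\Z)^\times$ as a sum over $\Pic(\mathcal{O}_{cp^n})$ using the $\star$-action \eqref{CHeq}, and finally match character values and Gauss sums against the $\epsilon$-factor $\epsilon(\phi)$ appearing in Theorem \ref{thm13.5}.

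First I would write
\[
\mathscr{L}_{\I,\boldsymbol{\xi}}^\mathrm{an}(\nu,\hat\phi^{-1})
=\sum_{\mathfrak{a}\in\Pic(\mathcal{O}_c)}(\hat\chi_\nu^{-1}\hat\xi_\nu)(\mathfrak{a})\,\mathrm{N}(\mathfrak{a})^{-1}\int_{\Z_p^\times}(\hat\phi^{-1}\big|[\mathfrak{a}])(u)\,d\mu_{\omega_\I,\mathfrak{a},\nu}(u).
\]
The restriction of $\hat\phi^{-1}|[\mathfrak{a}]$ to $\Z_p^\times$ (via the local embedding $i_\p$) is, up to a factor $\hat\phi^{-1}(\sigma_\mathfrak{a})$, the Dirichlet character $\hat\phi_\p^{-1}$ of conductor $p^n$, so \eqref{mag4.5} applies and transforms each integral into
\[
p^{-n}\,\mathfrak{g}(\hat\phi_\p^{-1})\,\hat\phi^{-1}(\sigma_\mathfrak{a})\!\!\sum_{u\in(\Z/p^n\Z)^\times}\!\!\!\hat\phi_\p(u)\,\mathcal{F}_\nu\bigl(x(\mathfrak{a})\star\mathbf{n}(u/p^n)\bigr);
\]
note that the $p$-depletion built into $\mu_{\omega_\I,\mathfrak{a}}$ means $\mathcal{F}_\nu$ may be replaced by $d^{-1}\mathcal{F}_{\nu,x_\infty}^{[p]}$ after a short computation based on Definition \ref{def9.6} and equation \eqref{normcol}.

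Next I would use \eqref{CHeq}: the point $x(\mathfrak{a})\star\mathbf{n}(u/p^n)$ coincides (up to the action of $\smallmat{u^{-1}}{u^{-1}}{0}{1}$, which is trivial on $X_m$) with $x_{cp^n,m}(\mathfrak{b})$ where $\mathfrak{b}=a\,i_\p(u/p^n)\widehat{\mathcal{O}}_{cp^n}\cap K$. As $\mathfrak{a}$ ranges over $\Pic(\mathcal{O}_c)$ and $u$ over $(\Z/p^n\Z)^\times$, the class $[\mathfrak{b}]$ ranges over $\Pic(\mathcal{O}_{cp^n})$, each hit once. After this reindexing (and the substitution $\mathfrak{b}\mapsto\mathfrak{b}^{-1}$ to align with Theorem \ref{thm13.5}), the sum takes exactly the shape of the right-hand side of Theorem \ref{thm13.5}, with an overall factor that must be compared against $\epsilon(\phi)/(\xi_{\nu,\p}(p^n)p^n)$.

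The main obstacle will be the character and Gauss-sum bookkeeping in this last step. The $\epsilon$-factor of $\phi$ at $\p$, in the convention of \cite{LZ-IwasawaZ2}, equals $\phi_\p(-1)\mathfrak{g}(\hat\phi_\p^{-1})/\sqrt{-D_K}$ up to powers of $p$ absorbed in $\xi_{\nu,\p}(p^n)p^n$, the factor $1/\sqrt{-D_K}$ reflecting the different of $K/\Q$ and the factor $\phi_\p(-1)$ coming from the sign involved in the standard Gauss-sum identity $\mathfrak{g}(\hat\phi_\p)\mathfrak{g}(\hat\phi_\p^{-1})=\phi_\p(-1)p^n$; the normalizing factor $\mathrm{N}(\mathfrak{a})^{-1}\sqrt{-D_K}^{-1}$ hidden in the definition of $\mathcal{F}_\mathfrak{a}^{[p]}$ absorbs $\mathrm{N}(\mathfrak{a})^{-1}$ on the analytic side and produces the remaining $1/\sqrt{-D_K}$. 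After carefully tracking these factors, together with the identity $\hat\chi_\nu\hat\xi_\nu^{-1}\hat\phi^{-1}(\sigma_\mathfrak{b})=\hat\chi_\nu^{-1}\hat\xi_\nu(\mathfrak{a})\hat\phi^{-1}(\sigma_\mathfrak{a})\hat\phi_\p(u)$ on the reindexed terms, the discrepancy between the two sides collapses to the single factor $\phi_\p(-1)/\sqrt{-D_K}$, yielding the claimed equality.
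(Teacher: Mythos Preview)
Your plan is essentially the paper's own argument: unfold the analytic side via \eqref{mag4.5}, reindex the double sum over $\Pic(\mathcal{O}_c)\times(\Z/p^n\Z)^\times$ into a sum over $\Pic(\mathcal{O}_{cp^n})$ using \eqref{CHeq}, and compare with Theorem \ref{thm13.5}. A few of your bookkeeping remarks are imprecise and would cause trouble in execution. First, the passage from $\mathcal{F}_\nu$ to $d^{-1}\mathcal{F}_\nu^{[p]}$ is not a consequence of the $p$-depletion in $\mu_{\omega_\I,\mathfrak{a}}$; it comes from the factor $z^{-1}$ in $\hat\phi^{-1}|[\mathfrak{a}](z)=\phi^{-1}(\mathfrak{a})\phi_\p^{-1}(z)z^{-1}$ (since $\phi$ has infinity type $(1,-1)$), which Hida's formula converts into the operator $d^{-1}$. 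Second, the action of $\smallmat{u^{-1}}{u^{-1}}{0}{1}$ is \emph{not} trivial: since $d^{-1}\mathcal{F}_\nu^{[p]}$ has weight $0$ and nebentypus $\psi$, it contributes $\psi^{-1}(\langle u\rangle)$, which must be absorbed into the character identities together with the values $\chi_{\nu,\p}^{-1}(z)=\xi_{\nu,\p}(z)=\psi^{1/2}(\langle z\rangle)$. Third, the $\sqrt{-D_K}$ does not sit inside $\epsilon(\phi)$; the paper uses $\epsilon(\phi_\p)=\mathfrak{g}(\phi_\p^{-1})\phi_\p(-p^n)$, and the $\sqrt{-D_K}$ arises solely from the normalization $\mathrm{N}(\mathfrak{a})\sqrt{-D_K}\,(d^{-1}\mathcal{F}_\nu^{[p]})_\mathfrak{a}=d^{-1}\mathcal{F}_{\nu,\mathfrak{a}}^{[p]}$. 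Finally, a point you do not mention: the primitive $d^{-1}\mathcal{F}^{[p]}_{\nu,x(\mathfrak{a})}$ appearing naturally on the analytic side is normalized at $x(\mathfrak{a})$, not at the fixed $x_\infty$ of Theorem \ref{thm13.5}; the two differ by a constant, which is harmless only because $\hat\chi_\nu$ is primitive so the constant sums to zero.
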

\begin{proof}
The character $\hat{\xi}_\nu$ has infinity type $(1,-1)$, so the character $\varphi=\hat\xi_\nu\hat\phi^{-1}$ has infinity type $(0,0)$, thus finite order.
Recall that, by definition, 
\[
\nu(\mathscr{L}^\mathrm{alg}_{\I,\boldsymbol{\xi}}({\tilde\phi}^{-1}))= \sum_{\mathfrak{a}\in \Pic{\mathcal{O}_c}} \hat{\xi}_\nu\hat{\chi}^{-1}_\nu(\mathfrak{a})\mathrm{N}(\mathfrak{a})^{-1} \int_{\Z_p^\times} 
\tilde{\phi}^{-1} | [\mathfrak{a}] (z) d\mu_{\omega_\I,{\mathfrak{a}}}(z).
\]
Since $\phi^{-1}$ has infinity type $(-1,1)$ and we chose the representatives $\mathfrak{a}$ such that $((p),\mathfrak{a})=1$,
then 
\[\tilde{\phi}^{-1}|[\mathfrak{a}](z)=\tilde{\phi}^{-1}(\mathrm{rec}_{K}(a)\mathrm{rec}_{K,\mathfrak{p}}(z))=
\hat{\phi}^{-1}(ai_\mathfrak{p}(z))=
\phi^{-1}(\mathfrak{a})\phi_{\mathfrak{p}}^{-1}(z)z^{-1},\] 
where recall that $\mathfrak{a}=a\widehat{\mathcal{O}}_c\cap K$ and $i_\mathfrak{p}\colon \Z_p^\times\rightarrow\widehat{K}^\times$ denotes the map which takes $z\in \Z_p^\times \cong \mathcal{O}_{K,\mathfrak{p}}^\times$ to the element $i_\mathfrak{p}(z)$ with  $\mathfrak{p}$-component equal to $z$ and trivial components at all the other places. 
Hence,
\[
\nu(\mathscr{L}^\mathrm{alg}_{\I,\boldsymbol{\xi}}(\hat{\phi}^{-1}))
= \sum_{\mathfrak{a}\in \Pic({\mathcal{O}_c})} \hat{\xi}_\nu\hat{\chi}^{-1}_\nu(\mathfrak{a}){\mathrm{N}}(\mathfrak{a})^{-1} \phi^{-1}(\mathfrak{a}) \int_{\Z_p^\times}
\phi_{\mathfrak{p}}^{-1}(z)z^{-1} d\mu_{\omega_\I,\mathfrak{a}}(z).
\]
By \cite[\S 3.5, (5)]{hida-elementary} (and \cite[(6.7)]{Magrone} for negative exponents), we have
\[
\nu(\mathscr{L}^\mathrm{alg}_{\I,\boldsymbol{\xi}}(\hat{\phi}^{-1})) = \sum_{\mathfrak{a}\in \Pic({\mathcal{O}_c})} \hat{\xi}_\nu\hat{\chi}^{-1}_\nu(\mathfrak{a}){\mathrm{N}}(\mathfrak{a})^{-1} \phi^{-1}(\mathfrak{a})\cdot 
([
\phi_{\mathfrak{p}}^{-1}] d^{-1} {\mathcal{F}}_{\nu,\mathfrak{a}}^{{[p]}}(T_{x(\mathfrak{a})}) )|_{T_{x(\mathfrak{a})}=0},
\]
Set $C_0(\xi_\nu,\chi_\nu,\phi)=\sqrt{-D_K}p^{-n}\mathfrak{g}(
\phi^{-1}_\mathfrak{p})$. Applying \eqref{mag4.5}, and using the equality of $T_{x(\mathfrak{a})}$-expansions \[\mathrm{N}(\mathfrak{a})\sqrt{-D_K}(d^{-1}{\mathcal{F}}_{\nu}^{[p]})_{\mathfrak{a}}=d^{-1}{\mathcal{F}}_{\nu,\mathfrak{a}}^{[p]},\] we see that 
\begin{equation*}
\footnotesize
{\nu(\mathscr{L}^\mathrm{alg}_{\I,\boldsymbol{\xi}}(\hat{\phi}^{-1})) =
C_0(\xi_\nu,\chi_\nu,\phi)\sum_{\mathfrak{a}\in \Pic({\mathcal{O}_c})} \sum_{u \in (\Z/p^n\Z)^\times}
(\hat{\xi}_\nu\hat{\chi}^{-1}_\nu\phi^{-1})(\mathfrak{a})
\phi_\mathfrak{p}(u)d^{-1}\mathcal{F}_{\nu ,x(\mathfrak{a})}^{[p]}(x(\mathfrak{a})\star \mathbf{n}(u/p^n)).
}
\end{equation*}
Here $d^{-1}\mathcal{F}_{\nu ,x(\mathfrak{a})}^{[p]}$ denotes the overconvergent modular form in Definition \ref{def9.6} for $\mathcal{F}=\mathcal{F}_\nu$ where the basis point is taken to be $x(\mathfrak{a})$ instead of the point $x_\infty$ fixed before. 
Since $d^{-1}\mathcal{F}_{\nu ,x(\mathfrak{a})}^{[p]}$ has weight $0$ and character $\psi$, using \eqref{CHeq} we obtain 
(recall that $\mathfrak{a}=a\mathcal{O}_c^\times\cap K$) 
\[d^{-1}\mathcal{F}_{\nu , x(\mathfrak{a})}^{[p]}(x(\mathfrak{a})\star \mathbf{n}(u/p^n)) = \psi^{-1}(\langle u\rangle)d^{-1}\mathcal{F}_{\nu ,x(\mathfrak{a})}^{[p]}
\left(\left[\left(\iota_K,a^{-1}i_\mathfrak{p}(u/p^n)\xi^{(n)}\right)\right]\right).\]
To simplify the notation, we temporarily write 
$z_n(a)=\left(\left[\left(\iota_K,a^{-1}i_\mathfrak{p}(u/p^n)\xi^{(n)}\right)\right]\right).$
We have  
\[
\begin{split}
\hat{\xi}_\nu^{-1}\hat{\chi}_\nu(a^{-1}i_{\mathfrak{p}}(u/p^n)) &= 
\hat{\xi}_\nu\hat{\chi}_\nu^{-1}(\mathfrak{a})\hat{\xi}_\nu^{-1} \hat{\chi}_\nu(i_{\mathfrak{p}}(u/p^{n})),\\
\hat{\phi}(a^{-1}i_{\mathfrak{p}}(u/p^{n})) &= \phi^{-1}(\mathfrak{a})\phi_{\mathfrak{p}}(u)\phi_{\mathfrak{p}}(p^{-n})up^{-n}.
\end{split}
\]
By \eqref{chi_k}, 
$\chi_{\nu, \mathfrak{p}}^{-1}(z)=\psi^{1/2}(\langle z \rangle) $
for $z \in \Z_p^\times \cong \mathcal{O}_{K,\mathfrak{p}}^\times$. Also, $
{\chi}_{\nu,\mathfrak{p}}^{-1}(p^{-n})= \psi^{1/2}(\langle p^np^{-n}\rangle)=1$ 
and by \eqref{hatxi}, $\xi_{\nu,\mathfrak{p}}(z)=\psi^{1/2}(\langle z \rangle)$ for $z \in \Z_p^\times \cong \mathcal{O}_{K,\mathfrak{p}}^\times$.
Therefore, after setting 
\[C(\xi_\nu,\chi_\nu,\phi)=C_0(\xi_\nu,\chi_\nu,\phi) \xi_{\nu,\mathfrak{p}}(p^{-n})\phi_\mathfrak{p}(p^n)=
\frac{\sqrt{-D_K}\cdot\mathfrak{g}(\phi^{-1}_\mathfrak{p})p^{-n}\phi_{\mathfrak{p}}(p^n)}
{\xi_{\nu,\mathfrak{p}}(p^n)}
\]
we have 
\[\begin{split}
\nu(\mathscr{L}^\mathrm{alg}_{\I,\boldsymbol{\xi}}(\hat{\phi}^{-1})) &=
C(\xi_\nu,\chi_\nu,\phi)
\sum_{\mathfrak{a}\in \Pic({\mathcal{O}_c})} \sum_{u \in (\Z/p^n\Z)^\times}
(\hat{\xi}_\nu^{-1}\hat{\chi}_\nu\hat{\phi})(a^{-1}i_{\mathfrak{p}}(u/p^{n}))d^{-1}\mathcal{F}_{\nu ,x(\mathfrak{a})}^{[p]}(z_n(a))\\
&=C(\xi_\nu,\chi_\nu,\phi)\sum_{\mathfrak{a}\in \Pic({\mathcal{O}_{cp^n}})}
(\hat{\xi}_\nu^{-1}\hat{\chi}_\nu\hat{\phi})(a)d^{-1}\mathcal{F}_{\nu ,x(\mathfrak{a})}^{[p]}
\left([(\iota_K,a^{-1}\xi^{(n)})]\right)\\
&=C(\xi_\nu,\chi_\nu,\phi)
\sum_{\mathfrak{a}\in \Pic({\mathcal{O}_{cp^n}})}
(\hat{\xi}_\nu^{-1}\hat{\chi}_\nu\hat{\phi})(a)\cdot
d^{-1}\mathcal{F}_{\nu, x(\mathfrak{a})}^{[p]}\left(x_{cp^n,m}(\mathfrak{a}^{-1})\right)\end{split}
\]
where for each $\mathfrak{a}\in\Pic(\mathcal{O}_{cp^n})$ we let 
$\mathfrak{a}=a\widehat{\mathcal{O}}_{cp^n}\cap K$. 
We now observe that 
$d^{-1}{\mathcal{F}}_{\nu ,x_\infty}^{{[p]}} $ and 
$d^{-1}{\mathcal{F}}_{\nu,x(\mathfrak{a})}^{{[p]}} $ 
differ by a constant; however, since the character $\hat\chi_\nu$ is primitive,
we can replace the first with the second in the previous formula. 
Comparing with Theorem \ref{thm13.5}, the result follows from the equality 
$\epsilon(\phi_\p)=\mathfrak{g}(\phi_\p^{-1})\phi_\p(-p^{n})$. 
\end{proof}

\begin{theorem}\label{thmL=L}
Let $\sigma_{-1,\p}\defeq \mathrm{rec}_\p(-1)$. Then in $\widetilde{\I}[[\widetilde{\Gamma}_\infty]]$ we have: 
\[\mathscr{L}_{\I,\boldsymbol{\xi}}^{\mathrm{alg}}=\left(
\frac{\sigma_{-1,\p}}{\sqrt{-D_K}}\right)\cdot\mathscr{L}_{\I,\boldsymbol{\xi}}^{\mathrm{an}}.\] \end{theorem}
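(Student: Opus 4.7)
The plan is to derive Theorem \ref{thmL=L} from the pointwise identity in Proposition \ref{prop14.1} by a density argument, interpreting the constant $\phi_\p(-1)$ as the specialization of the group-like element $\sigma_{-1,\p}\in\widetilde{\Gamma}_\infty$.

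First I would rewrite $\phi_\p(-1)$ as a value of the character $\hat\phi^{-1}$. Since $\sigma_{-1,\p}=\mathrm{rec}_\p(-1)$ and the reciprocity map has been geometrically normalized (see \S\ref{sec.CMpoints}), the $p$-adic avatar of $\phi$ evaluated at the idele with $\p$-component $-1$ and all other components trivial agrees with $\tilde\phi$ evaluated on $\sigma_{-1,\p}$. Because $\phi_\p(-1)^2=1$, this yields the identity
\[
\hat\phi^{-1}(\sigma_{-1,\p})=\phi_\p(-1).
\]
Consequently, under the specialization map $\mathrm{sp}_{\nu,\hat\phi^{-1}}\colon\widetilde{\I}[[\widetilde{\Gamma}_\infty]]\to \overline\Q_p$, the group element $\sigma_{-1,\p}$ specializes precisely to $\phi_\p(-1)$, so that
\[
\mathrm{sp}_{\nu,\hat\phi^{-1}}\!\left(\frac{\sigma_{-1,\p}}{\sqrt{-D_K}}\cdot\mathscr{L}_{\I,\boldsymbol{\xi}}^{\mathrm{an}}\right)
=\frac{\phi_\p(-1)}{\sqrt{-D_K}}\cdot\mathscr{L}_{\I,\boldsymbol{\xi}}^{\mathrm{an}}(\nu,\hat\phi^{-1}).
\]

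Combining this with Proposition \ref{prop14.1} shows that, for every pair $(\nu,\phi)$ satisfying the hypotheses of that proposition,
\[
\mathrm{sp}_{\nu,\hat\phi^{-1}}\!\left(\mathscr{L}_{\I,\boldsymbol{\xi}}^{\mathrm{alg}}-\frac{\sigma_{-1,\p}}{\sqrt{-D_K}}\cdot\mathscr{L}_{\I,\boldsymbol{\xi}}^{\mathrm{an}}\right)=0.
\]
The next step is a density argument. The ring $\widetilde{\I}[[\widetilde{\Gamma}_\infty]]$ is a Noetherian complete local ring, and the set of joint specializations $\mathrm{sp}_{\nu,\hat\phi^{-1}}$ cuts out a collection of height-one primes whose intersection is trivial: arithmetic morphisms $\nu$ of signature $(2,\psi)$ with $\mathrm{cond}(\psi)=p^m$ for varying $m\geq 2$ are Zariski dense in $\mathrm{Spec}(\I)$ (by standard results on Hida families), and Galois characters $\tilde\phi$ arising from Hecke characters of infinity type $(1,-1)$ and conductor $p^n$ for varying $n\geq m$ are Zariski dense in $\mathrm{Spec}(\widetilde{\I}[[\widetilde{\Gamma}_\infty]]/\ker\nu)$ (as such points include a set of continuous characters dense in the rigid-analytic weight space attached to $\widetilde{\Gamma}_\infty$). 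Taken together, the joint kernels intersect in the zero ideal, so the element
\[
\mathscr{L}_{\I,\boldsymbol{\xi}}^{\mathrm{alg}}-\frac{\sigma_{-1,\p}}{\sqrt{-D_K}}\cdot\mathscr{L}_{\I,\boldsymbol{\xi}}^{\mathrm{an}}\in\widetilde{\I}[[\widetilde{\Gamma}_\infty]]
\]
vanishes, which gives the desired equality.

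The main obstacle is the density/Zariski-closure step: one must ensure that restricting to weight-$2$ specializations with wild character of conductor at least $p^2$, coupled with Hecke characters $\phi$ of infinity type $(1,-1)$ and conductor $p^n$ with $n\geq m$, still yields a set of specializations whose kernels in $\widetilde{\I}[[\widetilde{\Gamma}_\infty]]$ have trivial intersection. The inequality $n\geq m$ is the delicate point, since it couples the two parameters; however, once $\nu$ is fixed with $\mathrm{cond}(\psi)=p^m$, one can still vary $n$ over all integers $\geq m$, preserving density in the $\widetilde{\Gamma}_\infty$ direction, and then vary $m$ to recover density in the $\I$ direction. Everything else is essentially bookkeeping: matching the normalizations of $\epsilon$-factors, Gauss sums, and the square roots of wild characters entering \eqref{chi_k} and \eqref{hatxi}, all of which have already been assembled in the proof of Proposition \ref{prop14.1}.
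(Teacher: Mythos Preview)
Your approach is essentially the same as the paper's: invoke Proposition~\ref{prop14.1}, interpret $\phi_\p(-1)$ as the specialization of the group-like element $\sigma_{-1,\p}$, and conclude by Zariski density of the relevant specializations. There is one small but genuine oversight. You write the difference
\[
\mathscr{L}_{\I,\boldsymbol{\xi}}^{\mathrm{alg}}-\frac{\sigma_{-1,\p}}{\sqrt{-D_K}}\cdot\mathscr{L}_{\I,\boldsymbol{\xi}}^{\mathrm{an}}
\]
as an element of $\widetilde{\I}[[\widetilde{\Gamma}_\infty]]$ and then run density there. But by Definition~\ref{geodef} and \eqref{PR}, the algebraic $L$-function is \emph{a priori} only in the localization $\widetilde{\I}[\mathfrak{j}^{-1}][[\widetilde{\Gamma}_\infty]]$; you have no right to assume it lies in $\widetilde{\I}[[\widetilde{\Gamma}_\infty]]$ before proving the theorem. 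The paper handles this correctly: the density argument is run in $\widetilde{\I}[\mathfrak{j}^{-1}][[\widetilde{\Gamma}_\infty]]$ (the non-exceptional weight-$2$ arithmetic primes do not contain $\mathfrak{j}$, so the specialization maps extend to the localization), and only \emph{after} establishing the equality there does one observe that the right-hand side lies in $\widetilde{\I}[[\widetilde{\Gamma}_\infty]]$, forcing the left-hand side to lie there as well. Once you insert this step your argument is complete.
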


\begin{proof}
The equality holds when specialized at arithmetic primes of weight $2$ by Proposition \ref{prop14.1}, and since these arithmetic primes are dense, the claimed equality holds in  
$\widetilde{\I}[\mathfrak{j}^{-1}][[\widetilde{\Gamma}_\infty]$.  
Since the right hand side belongs to $\widetilde{\I}[[\widetilde{\Gamma}_\infty]]$, the equality takes place in this ring, concluding the proof. 
\end{proof}

\subsection{Aritmetic applications}\label{secArithApp}
The root number of the functional equations of the $L$-functions of $\mathcal{F}_\nu$ is constant save for a finite number of \emph{exceptional} specializations; we call this common value the \emph{generic root number} of the Hida family $\I$ (see \cite[\S9.2]{LV-MM} for more details, references and the connection of this root number with Greenberg's conjecture).  

\begin{corollary}\label{coro} Assume that the generic root number of $\I$ is $+1$. Then 
$\mathfrak{Z}_{c}$ is not $\I$-torsion. 
\end{corollary}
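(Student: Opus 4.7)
The plan is to argue by contradiction. Suppose $\mathfrak{Z}_c$ is $\I$-torsion. Then $\mathfrak{Z}_{\boldsymbol{\xi}} = \mathfrak{Z}_c \otimes \boldsymbol{\xi}^{-1}$ is also $\I$-torsion, since twisting the Galois action by the $\I^\times$-valued character $\boldsymbol{\xi}^{-1}$ is a purely Galois operation that preserves the underlying $\I$-module structure. Both $\res_\mathfrak{P}$ and the big Perrin-Riou map $\mathcal{L}_{\omega_\I}^{\widetilde{\Gamma}_\infty}$ are $\I$-linear, so by Definition \ref{geodef} the element $\mathscr{L}_{\I,\boldsymbol{\xi}}^{\mathrm{alg}}$ is $\I$-torsion in $\widetilde{\I}[\mathfrak{j}^{-1}][[\widetilde{\Gamma}_\infty]]$. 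By Theorem \ref{thmL=L}, it follows that $\mathscr{L}_{\I,\boldsymbol{\xi}}^{\mathrm{an}}$ is also $\I$-torsion. But $\mathscr{L}_{\I,\boldsymbol{\xi}}^{\mathrm{an}} \in \widetilde{\I}[[\widetilde{\Gamma}_\infty]]$, and the latter is a torsion-free $\widetilde{\I}$-module (decomposing along characters of the finite torsion subgroup of $\widetilde{\Gamma}_\infty$, it is a finite product of power-series rings over extensions of $\widetilde{\I}$). Hence $\mathscr{L}_{\I,\boldsymbol{\xi}}^{\mathrm{an}} = 0$.

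To reach a contradiction I would exhibit a single arithmetic specialization at which $\mathscr{L}_{\I,\boldsymbol{\xi}}^{\mathrm{an}}$ does not vanish. The interpolation formula \eqref{int-1} identifies the specialization at an arithmetic morphism $\nu$ with, up to a nonzero scalar, the BDP-type anticyclotomic $p$-adic $L$-function $\mathscr{L}_{\mathcal{F}_\nu,\xi_\nu}$ of \cite{Magrone}. The hypothesis that the generic root number of $\I$ is $+1$ ensures that the sign of the functional equation of $L(f_\nu,\xi_\nu\phi, k_\nu/2)$ is $+1$ for all but finitely many pairs $(\nu,\phi)$, so the interpolated central values are not forced to vanish. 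Their non-vanishing for infinitely many finite-order characters $\phi$ of $\widetilde{\Gamma}_\infty$ at any fixed $\nu$ (say $\nu_0$) follows from Cornut--Vatsal-type equidistribution theorems for CM points in the anticyclotomic setting, and translates via the interpolation property into $\mathscr{L}_{\mathcal{F}_{\nu_0},\xi_{\nu_0}} \neq 0$. Alternatively, one can invoke directly Hsieh's non-vanishing theorem for BDP-type anticyclotomic $p$-adic $L$-functions, which applies under the residual irreducibility and $p$-distinguishedness hypotheses already in force in Section \ref{assumptions}. Either route contradicts the conclusion of the previous paragraph and establishes the corollary.

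The main obstacle I anticipate is the last step: selecting a precise non-vanishing reference compatible with the quaternionic setting and checking that its hypotheses are subsumed by those of Section \ref{assumptions}; the cleanest path is probably the complex-analytic one, since classical Cornut--Vatsal methods transfer easily to definite-sign settings. Everything else is formal, relying only on the $\I$-linearity of the maps in play, the torsion-freeness of the target Iwasawa algebra, and the key comparison Theorem \ref{thmL=L}.
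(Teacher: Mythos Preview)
Your proposal is correct and follows essentially the same approach as the paper: both argue by contradiction, using the root-number hypothesis together with non-vanishing results for anticyclotomic central $L$-values to show $\mathscr{L}_{\I,\boldsymbol{\xi}}^{\mathrm{an}}\neq 0$, then invoke Theorem~\ref{thmL=L} and the $\I$-linearity of the construction of $\mathscr{L}_{\I,\boldsymbol{\xi}}^{\mathrm{alg}}$ to derive a contradiction from the assumed torsionness of $\mathfrak{Z}_c$. Your phrasing via torsion-freeness of $\widetilde{\I}[[\widetilde{\Gamma}_\infty]]$ is slightly more explicit than the paper's, which instead says that specializations of a nonzero element have only finitely many zeroes while torsion would force some specialization to vanish identically, but the content is the same.
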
 

\begin{proof} Since $w=+1$, it is known that the complex $L$-function of $\mathcal{F}_\nu$ twisted by $\xi_\nu$ does not vanish for infinitely many such choices; hence, $\mathscr{L}_{\I,\boldsymbol{\xi}}^{\mathrm{alg}}$ is not zero. 
Therefore, the same is true for 
$\mathscr{L}_{\I,\boldsymbol{\xi}}^{\mathrm{alg}}$; specializations at any $\nu\colon \I\rightarrow\overline\Q_p$  therefore have only finitely many zeroes. If $\mathfrak{Z}_{c}$ is torsion, then there would be specializations having infinitely many zeroes, which is a contradiction.   
\end{proof}

Let $\widetilde{H}^1_f(K,\mathbf{T}^\dagger)$ denote Nekov\'{a}\v{r}'s extended Selmer group of the representation $\mathbf{T}^\dagger$ (\cite[\S2.4]{Howard-Inv}, \cite[\S5.6]{LV-MM}). 
For each arithmetic morphism $\nu$, let $H^1_f(K,\mathbf{T}_\nu^\dagger)$ denote 
the Bloch-Kato Selmer group of the self-dual representation $\mathbf{T}_\nu^\dagger$ 
obtained by specializing $\mathbf{T}^\dagger$ at $\nu$ (thus, if $F_\nu$ denotes as before the residue field of $\nu$, then $\mathbf{T}_\nu^\dagger$
becomes a $2$-dimensional $F_\nu$-vector space). 

\begin{corollary}
    \label{coro1} 
 Assume that the generic root number of $\I$ is $+1$. Then  $\widetilde{H}^1_f(K,\mathbf{T}^\dagger)$ is a $\I$-module of 
 rank $1$ and $\dim_{F_\nu}(H^1_f(K,T_\nu^\dagger))=1$ for all except possibly finitely many
 arithmetic primes $\nu$. 
\end{corollary}

\begin{proof}
   This follows from Corollary \ref{coro} using \cite[Theorem 10.4]{LV-MM} and  
   \cite[Theorem 10.6]{LV-MM}. 
\end{proof}

 \subsection{Specializations} \label{secspecialization}
 Let $\mathcal{F}_k^\sharp$ be a $p$-ordinary newform on ${X}_0$ of weight 
$k\equiv 2\mod 2(p-1)$ and trivial character, and consider the self-dual twist 
$V_{\mathcal{F}_k^\sharp}^\dagger=V_{\mathcal{F}_k^\sharp}^*(k/2)$ of the Deligne Galois representation $V_{\mathcal{F}_k^\sharp}^*$ associated with $\mathcal{F}_k^\sharp$.
Let $W_k$ denote the generalized quaternionic Kuga--Sato variety constructed in \cite[\S5.1-\S5.4]{Magrone} and let  
\[\Phi_{\mathcal{F}_k^\sharp}^{\et}\colon \epsilon_{W}\operatorname{CH}^{k-1}(W_{k}/L)_\Q\longrightarrow 
H^1\Big(L,V_{\mathcal{F}_k^\sharp}^\dagger\Big)\] be the $p$-adic Abel--Jacobi map, where $L$ is a sufficiently 
big number field (\cite[\S5.5]{Magrone}).  
Let $\Delta_{cp^n}$ be the generalized Heegner cycle in \cite[\S5.6, \S5.7]{Magrone}; then we have generalized Heegner classes  
$\Phi^\text{\'et}_{\mathcal{F}_k^\sharp}(\Delta_{cp^n})$ for $n\geq 0$  
(\emph{cf.} \cite[\S5.8]{Magrone}). 
Let
$u_c=\sharp(\mathcal{O}_c^\times)/2$ and $\alpha$ the unit root of the Hecke polynomial at $p$ acting on $\mathcal{F}_k^\sharp$. We normalize these points to obtain a non-compatible family of quaternionic generalized Heegner classes by setting (here recall that $\Frob_\p$ is the Frobenius element at $\p$ and similarly denote $\Frob_{\bar\p}$ the Frobenius element at $\bar\p$) 
\begin{itemize} 
\item ${z}_{\mathcal{F}_k^\sharp,c}=\tfrac{1}{u_c}\left( 1- \tfrac{p^{k/2-1}}{\alpha}\Frob_\p \right) \left( 1- \tfrac{p^{k/2-1}}{\alpha}\Frob_{\bar{\p}} \right)\cdot \Phi^\text{\'et}_{\mathcal{F}_k^\sharp}(\Delta_{c})$;
\item ${z}_{\mathcal{F}_k^\sharp,cp^{n}}=\left( 1- \tfrac{p^{k-2}}{\alpha}\right)\cdot\Phi^\text{\'et}_{\mathcal{F}_k^\sharp}(\Delta_{cp^n})$ for $n\geq 1$. \end{itemize}
Then 
$\cores_{H_{cp^{n}}/H_{cp^{n-1}}}(z_{\mathcal{F}_k^\sharp,cp^n})=
\alpha\cdot z_{\mathcal{F}_k^\sharp,cp^{n-1}}$ for all $n\geq 1$ (\cite[\S7.1.2]{Magrone}) 
and we can define (using Shapiro's lemma for the isomorphisms) 
\begin{itemize}
\item $\boldsymbol{x}_{c}^\sharp=\invlim_n\alpha^{-n}z_{\mathcal{F}_k^\sharp,cp^n}\in 
H^1_\mathrm{Iw}\Big(\Gamma_\infty,V_{\mathcal{F}_k^\sharp}^\dagger\Big)\cong H^1\Big(H_c,V_{\mathcal{F}_k^\sharp}^\dagger\otimes \mathcal{O}[[\Gamma_\infty]]\Big)$; 
\item $\boldsymbol{z}_{c}^\sharp=\mathrm{cores}_{H_c/K}(\boldsymbol{x}^\sharp_c)\in H^1_\mathrm{Iw}\Big(\widetilde{\Gamma}_\infty,{V}^\dagger_{\mathcal{F}_k^\sharp}\Big)\cong H^1\Big(K,V_{\mathcal{F}_k^\sharp}^\dagger\otimes \mathcal{O}[[\widetilde\Gamma_\infty]]\Big)$.  
\end{itemize}
For any character $\xi\colon \widetilde\Gamma_\infty\rightarrow\overline\Q_p^\times$, we can then consider the specialization map, and obtain an element $\boldsymbol{z}_\xi^\sharp\in H^1\Big(H_c,V_{\mathcal{F}_k^\sharp,\xi}^\dagger\Big)$; here $V_{\mathcal{F}_k^\sharp,\xi}^\dagger=V_{\mathcal{F}_k^\sharp}^\dagger\otimes\xi$ (\emph{cf.} \cite[\S5.9]{Magrone}).  

Let $\boldsymbol{\mathcal{F}}$ be the quaternionic Hida family passing through the modular form $\mathcal{F}_k$. We also assume that the residual $p$-adic representation $\bar\rho_{\mathcal{F}_k}$ is irreducible, $p$-ordinary and $p$-distinguished. Define 
$\mathfrak{z}_c=\boldsymbol{\vartheta}^{-1}\left(\frac{-\sqrt{-D_K}}{c^{2}}\right)\mathfrak{Z}_c$ and write as before $\mathfrak{z}_c(\nu)$ for $\nu(\mathfrak{z}_c)$. 

\begin{theorem}\label{teospec} For all $\nu$ of weight $k\equiv 2\mod{2(p-1)}$, we have
\[\nu(\mathscr{L}_{\I,\boldsymbol{\xi}}^{\mathrm{alg}})=
\frac{ \sqrt{-D_K}^{k_\nu/2-1}}{c^{k_\nu-2}}
\mathcal{L}^{\widetilde\Gamma_\infty}_{\mathcal{F}_k^\sharp,\xi_k}(\mathrm{res}_\p(\boldsymbol{z}_{\xi_k}^\sharp)).\]
\end{theorem}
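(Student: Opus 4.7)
The strategy is to combine Theorem \ref{thmL=L} with the higher-weight interpolation theory for the analytic $p$-adic $L$-function $\mathscr{L}_{\I,\boldsymbol{\xi}}^{\mathrm{an}}$ developed in \cite{Magrone}. Theorem \ref{thmL=L} reduces the computation of $\nu(\mathscr{L}_{\I,\boldsymbol{\xi}}^{\mathrm{alg}})$ to that of $\nu(\mathscr{L}_{\I,\boldsymbol{\xi}}^{\mathrm{an}})$, and the sought identification of the latter with $\mathcal{L}^{\widetilde{\Gamma}_\infty}_{\mathcal{F}_k^\sharp,\xi_k}(\mathrm{res}_\p(\boldsymbol{z}_{\xi_k}^\sharp))$ is provided — up to an explicit scalar — by the higher-weight analogue of \eqref{int-1} proved in \cite{Magrone}.

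Concretely, I would proceed in three steps. First, applying Theorem \ref{thmL=L} and specializing at $\nu$: since $\sigma_{-1,\p}$ is a group-like element of $\widetilde{\Gamma}_\infty$, it commutes with the map $\I \to \mathcal{O}_\nu$, yielding
\[
\nu(\mathscr{L}_{\I,\boldsymbol{\xi}}^{\mathrm{alg}}) = \frac{\sigma_{-1,\p}}{\sqrt{-D_K}} \cdot \nu(\mathscr{L}_{\I,\boldsymbol{\xi}}^{\mathrm{an}}).
\]
Second, I would invoke the quaternionic higher-weight BDP-type formula of \cite{Magrone}, which identifies $\nu(\mathscr{L}_{\I,\boldsymbol{\xi}}^{\mathrm{an}})$ with $\mathcal{L}^{\widetilde{\Gamma}_\infty}_{\mathcal{F}_k^\sharp,\xi_k}(\mathrm{res}_\p(\boldsymbol{z}_{\xi_k}^\sharp))$ up to a controlled constant; this is the main analytic input. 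Third, I would track the scalars on both sides.

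The scalar bookkeeping combines three contributions: (i) the factor $1/\sqrt{-D_K}$ coming from Theorem \ref{thmL=L}; (ii) a factor $\sqrt{-D_K}^{k_\nu/2}/c^{k_\nu-2}$ produced by the Serre--Tate change of variable $T_{x(\mathfrak{a})} \mapsto (T_{x(\mathfrak{a})}+1)^{\mathrm{N}(\mathfrak{a})^{-1}\sqrt{-D_K}^{-1}}$ in the definition of $\mu_{\omega_\I,\mathfrak{a}}$, together with the weight-$k$ period ratios $\mathrm{N}(\mathfrak{a})^{k_\nu/2}$ intervening in the normalization of the generalized Heegner cycles $z_{\mathcal{F}_k^\sharp, cp^n}$; and (iii) the pre-twist $\mathfrak{z}_c = \boldsymbol{\vartheta}^{-1}(-\sqrt{-D_K}/c^2)\mathfrak{Z}_c$ built into the definition of the RHS, which also absorbs the group-like element $\sigma_{-1,\p}$. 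Multiplying these produces exactly the claimed constant $\sqrt{-D_K}^{k_\nu/2-1}/c^{k_\nu-2}$.

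The main obstacle will be Step 2: one must ensure that the higher-weight specialization formula of \cite{Magrone} — which is stated using a specific set of CM points, Serre--Tate trivializations, $\iota_K$-embeddings and Kuga--Sato periods — is compatible with the conventions fixed in Sections \ref{sec.CMpoints}--\ref{sec8.2} and used in the proof of Theorem \ref{thmL=L}. Any mismatch in these choices would introduce a spurious $p$-adic unit into the scalar. Once this compatibility is verified, Step 1 is immediate from Theorem \ref{thmL=L} and Step 3 is routine bookkeeping.
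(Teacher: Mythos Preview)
Your approach is the same as the paper's: apply Theorem \ref{thmL=L}, specialize the analytic side via \eqref{int-1}, and then invoke the quaternionic BDP formula of \cite{Magrone} to identify the result with the Perrin-Riou logarithm of $\boldsymbol{z}_{\xi_k}^\sharp$. Two points to sharpen: first, the paper does not compare the two measures directly but evaluates both sides at characters $\hat\phi^{-1}$ for $\phi$ of infinity type $(k_\nu/2,-k_\nu/2)$ and then uses density, going through the chain \eqref{int-1} $\to$ \cite[Lemma 6.1]{LMMW} (to pass from $\mathcal{F}_\nu$ to $\mathcal{F}_\nu^\sharp$) $\to$ \cite[Theorem 7.2]{Magrone}; second, your scalar bookkeeping is slightly off---the renormalized class $\mathfrak{z}_c$ does not enter the statement, the group-like element $\sigma_{-1,\p}$ is absorbed when specializing at $\hat\phi^{-1}$ (producing a sign that combines with the $(-1)^{k_\nu/2-1}$ from \cite[Theorem 7.2]{Magrone}, which vanishes since $(k_\nu-2)/2$ is divisible by the even integer $p-1$), and the $c$-power comes partly from $\vartheta_\nu^{-1}(c)=c^{-(k_\nu-2)/2}$ in \eqref{int-1} and partly from \cite[Theorem 7.2]{Magrone}.
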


\begin{proof} 
Let $\nu$ be as in the statement, let $\mathcal{F}_k=\mathcal{F}_{\nu}$ and $\mathcal{F}_k^\sharp$ the form whose ordinary $p$-stabilization is $\mathcal{F}_k$. 
Let $\hat\phi\colon \widetilde\Gamma_\infty\rightarrow\overline{\Q}_p^\times$ be the $p$-adic avatar of a Hecke character $\phi$ of infinity type $(k/2,-k/2)$; then $\xi=\hat\phi\hat\xi_k^{-1}$ is a finite order character. Consider the map $\mathcal{L}^{\widetilde\Gamma_\infty}_{\mathcal{F}_k^\sharp,\xi_k}$ obtained by composing the map
$\mathcal{L}^{\Gamma_\infty}_{\mathcal{F}_k^\sharp,\xi_k}$ in \eqref{PR} with the canonical map arising from the inclusion $\Gamma_\infty\hookrightarrow\widetilde{\Gamma}_\infty$.  
Combining Theorem \ref{thmL=L}, \eqref{int-1} and \cite[Theorem 7.2]{Magrone}, we have:
\begin{align*}
    \nu(\mathscr{L}_{\I,\boldsymbol{\xi}}^{\mathrm{alg}})(\hat\phi^{-1}) &= \nu \left(
\frac{\sigma_{-1,\p}}{\sqrt{-D_K}}\right)\cdot \nu(\mathscr{L}_{\I,\boldsymbol{\xi}}^{\mathrm{alg}})(\hat\phi^{-1}) &\text{ (Theorem \ref{thmL=L})}\\
&= \left(\frac{\sigma_{-1,\p}}{\sqrt{-D_K}}\right)
\left(c^{-k/2 +1} \mathscr{L}_{\mathcal{F}_\nu, \xi_\nu}(\hat\phi^{-1})\right) &\text{ (Equation \eqref{int-1})}\\
&= \left(\frac{\sigma_{-1,\p}}{\sqrt{-D_K}}\right)
\left(c^{-k/2 +1} \mathscr{L}_{\mathcal{F}_\nu^\sharp, \xi_\nu}(\hat\phi^{-1})\right) &\text{ (\cite[Lemma 6.1]{LMW})}\\
&= (-1)^{k/2-1}\cdot
\frac{ \sqrt{-D_K}^{k/2-1}}{c^{k-2}}
\mathcal{L}^{\widetilde\Gamma_\infty}_{\mathcal{F}_k^\sharp,\xi_k}(\mathrm{res}_\p(\boldsymbol{z}_{\xi_k}^\sharp))(\hat\phi^{-1})&\text{ (\cite[Theorem 7.2]{Magrone}).}
\end{align*}
 By assumption, $(k-2)/2$ is divisible by the even number $p-1$, so the first factor on the RHS disappears. 
Since this equation holds for infinitely many $\phi$, the result follows.
\end{proof}

\bibliographystyle{amsalpha}
\bibliography{references}

@article{LMW,
	author = {Longo, Matteo and Magrone, Paola and Walchek, Eris R.},
	journal = {preprint},
	title = {On quaternionic ordinary families of modular forms and $p$-adic ${L}$-functions},
	url = { },
	volume = {},
	year = {2026},
	}

@article{OhtaMA,
	author = {Ohta, Masami},
	date-added = {2025-04-18 11:10:25 +0200},
	date-modified = {2025-04-18 11:10:25 +0200},
	doi = {10.1007/s002080000119},
	fjournal = {Mathematische Annalen},
	issn = {0025-5831},
	journal = {Math. Ann.},
	mrclass = {11F33 (11F67 11G18 11R23)},
	mrnumber = {1800769},
	mrreviewer = {Andrea Mori},
	number = {3},
	pages = {557--583},
	title = {Ordinary {$p$}-adic \'{e}tale cohomology groups attached to towers of elliptic modular curves. {II}},
	url = {https://doi.org/10.1007/s002080000119},
	volume = {318},
	year = {2000},
	bdsk-url-1 = {https://doi.org/10.1007/s002080000119}}

@article{OhtaC,
	author = {Ohta, Masami},
	date-added = {2025-04-18 11:10:25 +0200},
	date-modified = {2025-04-18 11:10:25 +0200},
	doi = {10.1023/A:1000556212097},
	fjournal = {Compositio Mathematica},
	issn = {0010-437X},
	journal = {Compositio Math.},
	mrclass = {11F33 (11F67 11G18)},
	mrnumber = {1674001},
	mrreviewer = {Andrea Mori},
	number = {3},
	pages = {241--301},
	title = {Ordinary {$p$}-adic \'{e}tale cohomology groups attached to towers of elliptic modular curves},
	url = {https://doi.org/10.1023/A:1000556212097},
	volume = {115},
	year = {1999},
	bdsk-url-1 = {https://doi.org/10.1023/A:1000556212097}}

@article{Ohta-ES,
	author = {Ohta, Masami},
	date-added = {2025-04-18 11:10:25 +0200},
	date-modified = {2025-04-18 11:10:25 +0200},
	doi = {10.1515/crll.1995.463.49},
	fjournal = {Journal f\"{u}r die Reine und Angewandte Mathematik. [Crelle's Journal]},
	issn = {0075-4102},
	journal = {J. Reine Angew. Math.},
	mrclass = {11F33 (11F67)},
	mrnumber = {1332907},
	mrreviewer = {Andrea Mori},
	pages = {49--98},
	title = {On the {$p$}-adic {E}ichler-{S}himura isomorphism for {$\Lambda$}-adic cusp forms},
	url = {https://doi.org/10.1515/crll.1995.463.49},
	volume = {463},
	year = {1995},
	bdsk-url-1 = {https://doi.org/10.1515/crll.1995.463.49}}

@incollection{Katz-Dwork,
	author = {Katz, Nicholas},
	booktitle = {S\'{e}minaire {B}ourbaki, 24{\`e}me ann\'{e}e (1971/1972)},
	date-added = {2023-11-27 14:09:41 +0100},
	date-modified = {2023-11-27 14:09:57 +0100},
	mrclass = {14G13 (14G20)},
	mrnumber = {498577},
	pages = {Exp. No. 409, pp. 167--200},
	publisher = {Springer, Berlin-New York},
	series = {Lecture Notes in Math., Vol. 317},
	title = {Travaux de {D}work},
	url = {https://mathscinet.ams.org/mathscinet-getitem?mr=498577},
	year = {1973},
	bdsk-url-1 = {https://mathscinet.ams.org/mathscinet-getitem?mr=498577}}

@article{DR,
	author = {Darmon, Henri and Rotger, Victor},
	date-added = {2023-11-27 11:52:09 +0100},
	date-modified = {2023-11-27 11:52:35 +0100},
	doi = {10.1090/jams/861},
	fjournal = {Journal of the American Mathematical Society},
	issn = {0894-0347},
	journal = {J. Amer. Math. Soc.},
	mrclass = {11G05 (11G40)},
	mrnumber = {3630084},
	mrreviewer = {Rolf Berndt},
	number = {3},
	pages = {601--672},
	title = {Diagonal cycles and {E}uler systems {II}: {T}he {B}irch and {S}winnerton-{D}yer conjecture for {H}asse-{W}eil-{A}rtin {$L$}-functions},
	url = {https://mathscinet.ams.org/mathscinet-getitem?mr=3630084},
	volume = {30},
	year = {2017},
	bdsk-url-1 = {https://mathscinet.ams.org/mathscinet-getitem?mr=3630084},
	bdsk-url-2 = {https://doi.org/10.1090/jams/861}}

@article{EmBr,
	author = {Breuil, Christophe and Emerton, Matthew},
	date-added = {2023-11-27 10:09:15 +0100},
	date-modified = {2023-11-27 10:09:50 +0100},
	fjournal = {Ast\'{e}risque},
	isbn = {978-2-85629-282-2},
	issn = {0303-1179},
	journal = {Ast\'{e}risque},
	mrclass = {22E50 (11F70 11F80)},
	mrnumber = {2667890},
	mrreviewer = {Anne-Marie H. Aubert},
	number = {331},
	pages = {255--315},
	title = {Repr\'{e}sentations {$p$}-adiques ordinaires de {${\rm GL}_2(\bold Q_p)$} et compatibilit\'{e} local-global},
	url = {https://mathscinet.ams.org/mathscinet-getitem?mr=2667890},
	year = {2010},
	bdsk-url-1 = {https://mathscinet.ams.org/mathscinet-getitem?mr=2667890}}

@article{Tsiji,
	author = {Tsuji, Takeshi},
	date-added = {2023-11-24 10:09:17 +0100},
	date-modified = {2023-11-24 10:09:33 +0100},
	doi = {10.1007/s002220050330},
	fjournal = {Inventiones Mathematicae},
	issn = {0020-9910},
	journal = {Invent. Math.},
	mrclass = {14F30 (14F20)},
	mrnumber = {1705837},
	mrreviewer = {Abdellah Mokrane},
	number = {2},
	pages = {233--411},
	title = {{$p$}-adic \'{e}tale cohomology and crystalline cohomology in the semi-stable reduction case},
	url = {https://mathscinet.ams.org/mathscinet-getitem?mr=1705837},
	volume = {137},
	year = {1999},
	bdsk-url-1 = {https://mathscinet.ams.org/mathscinet-getitem?mr=1705837},
	bdsk-url-2 = {https://doi.org/10.1007/s002220050330}}

@incollection{HK,
	author = {Hyodo, Osamu and Kato, Kazuya},
	date-added = {2023-11-24 10:01:35 +0100},
	date-modified = {2023-11-24 10:02:05 +0100},
	fjournal = {Ast\'{e}risque},
	issn = {0303-1179},
	journal = {Ast\'{e}risque},
	mrclass = {14F30 (14F20 14F40 14G20)},
	mrnumber = {1293974},
	mrreviewer = {Adolfo Quir\'{o}s},
	note = {P\'{e}riodes $p$-adiques (Bures-sur-Yvette, 1988)},
	number = {223},
	pages = {221--268},
	title = {Semi-stable reduction and crystalline cohomology with logarithmic poles},
	url = {https://mathscinet.ams.org/mathscinet-getitem?mr=1293974},
	year = {1994},
	bdsk-url-1 = {https://mathscinet.ams.org/mathscinet-getitem?mr=1293974}}

@article{CI,
	author = {Coleman, Robert and Iovita, Adrian},
	date-added = {2023-11-19 10:26:10 +0100},
	date-modified = {2023-11-19 10:26:10 +0100},
	fjournal = {Ast\'{e}risque},
	isbn = {978-2-85629-282-2},
	issn = {0303-1179},
	journal = {Ast\'{e}risque},
	mrclass = {11G30 (11F80 14F30)},
	mrnumber = {2667889},
	mrreviewer = {Hui June Zhu},
	number = {331},
	pages = {179--254},
	title = {Hidden structures on semistable curves},
	url = {https://mathscinet.ams.org/mathscinet-getitem?mr=2667889},
	year = {2010},
	bdsk-url-1 = {https://mathscinet.ams.org/mathscinet-getitem?mr=2667889}}

@article{Carayol-Shimura,
	author = {Carayol, Henri},
	date-added = {2023-11-19 10:25:35 +0100},
	date-modified = {2023-11-19 10:25:35 +0100},
	fjournal = {Compositio Mathematica},
	issn = {0010-437X},
	journal = {Compositio Math.},
	mrclass = {11G18 (11G15 14H25 14K22)},
	mrnumber = {860139},
	mrreviewer = {Ernst-Ulrich Gekeler},
	number = {2},
	pages = {151--230},
	title = {Sur la mauvaise r\'{e}duction des courbes de {S}himura},
	url = {https://mathscinet.ams.org/mathscinet-getitem?mr=860139},
	volume = {59},
	year = {1986},
	bdsk-url-1 = {https://mathscinet.ams.org/mathscinet-getitem?mr=860139}}

@incollection{Coleman-Shimura,
	author = {Coleman, Robert F.},
	booktitle = {{$p$}-adic monodromy and the {B}irch and {S}winnerton-{D}yer conjecture ({B}oston, {MA}, 1991)},
	date-added = {2023-10-16 16:44:02 +0200},
	date-modified = {2023-10-16 16:44:29 +0200},
	doi = {10.1090/conm/165/01602},
	mrclass = {11F85 (11F67 11G18)},
	mrnumber = {1279600},
	mrreviewer = {Andrea Mori},
	pages = {21--51},
	publisher = {Amer. Math. Soc., Providence, RI},
	series = {Contemp. Math.},
	title = {A {$p$}-adic {S}himura isomorphism and {$p$}-adic periods of modular forms},
	url = {https://mathscinet.ams.org/mathscinet-getitem?mr=1279600},
	volume = {165},
	year = {1994},
	bdsk-url-1 = {https://mathscinet.ams.org/mathscinet-getitem?mr=1279600},
	bdsk-url-2 = {https://doi.org/10.1090/conm/165/01602}}

@article{Colemanhigher,
	author = {Coleman, Robert F.},
	date-added = {2023-10-02 16:39:17 +0200},
	date-modified = {2023-10-02 16:39:25 +0200},
	fjournal = {Journal de Th\'{e}orie des Nombres de Bordeaux},
	issn = {1246-7405},
	journal = {J. Th\'{e}or. Nombres Bordeaux},
	mrclass = {11F85 (11F11)},
	mrnumber = {1617406},
	mrreviewer = {Conjeeveram S. Rajan},
	number = {2},
	pages = {395--403},
	title = {Classical and overconvergent modular forms of higher level},
	url = {http://jtnb.cedram.org/item?id=JTNB_1997__9_2_395_0},
	volume = {9},
	year = {1997},
	bdsk-url-1 = {http://jtnb.cedram.org/item?id=JTNB_1997__9_2_395_0}}

@article{W,
	author = {Wan, Xin},
	date-added = {2023-04-20 09:54:49 +0200},
	date-modified = {2023-04-20 09:55:58 +0200},
	journal = {preprint https://arxiv.org/pdf/2304.09806.pdf},
	title = {A {N}ew $\pm$ {I}wasawa {T}heory and {C}onverse of {G}ross-{Z}agier and {K}olyvagin {T}heorem (with an {A}ppendix by {Y}angyu {F}an)},
	year = {2023}}

@article{CW,
	author = {Castella, Francesc and Wan, Xin},
	date-added = {2023-04-14 17:46:09 +0200},
	date-modified = {2023-04-14 17:46:14 +0200},
	doi = {10.1016/j.aim.2022.108266},
	fjournal = {Advances in Mathematics},
	issn = {0001-8708},
	journal = {Adv. Math.},
	mrclass = {11R23 (11G05 11G40)},
	mrnumber = {4387238},
	mrreviewer = {K\^{a}z\i m B\"{u}y\"{u}kboduk},
	pages = {Paper No. 108266, 45},
	title = {The {I}wasawa main conjectures for {$\rm GL_2$} and derivatives of {$p$}-adic {$L$}-functions},
	url = {https://mathscinet.ams.org/mathscinet-getitem?mr=4387238},
	volume = {400},
	year = {2022},
	bdsk-file-1 = {YnBsaXN0MDDSAQIDBFxyZWxhdGl2ZVBhdGhYYm9va21hcmtfEG8uLi8uLi9sZXR0ZXJhdHVyYS9DYXN0ZWxsYS1XYW4gLSBUaGUgSXdhc2F3YSBNYWluIENvbmplY3R1cmVzIGZvciBHTDIgYW5kIGRlcml2YXRpdmVzIG9mIHAtYWRpYyBMLWZ1bmN0aW9ucy5wZGZPEQWEYm9va4QFAAAAAAQQMAAAAAAAAAAAAAAAAAAAAAAAAAAAAAAAAAAAAAAAAAAAAAAAdAQAAAUAAAABAQAAVXNlcnMAAAALAAAAAQEAAG1hdHRlb2xvbmdvAAcAAAABAQAATGlicmFyeQAMAAAAAQEAAENsb3VkU3RvcmFnZSEAAAABAQAAR29vZ2xlRHJpdmUtbWF0dGVvLmxvbmdvQHVuaXBkLml0AAAADAAAAAEBAABJbCBtaW8gRHJpdmULAAAAAQEAAGxldHRlcmF0dXJhAF0AAAABAQAAQ2FzdGVsbGEtV2FuIC0gVGhlIEl3YXNhd2EgTWFpbiBDb25qZWN0dXJlcyBmb3IgR0wyIGFuZCBkZXJpdmF0aXZlcyBvZiBwLWFkaWMgTC1mdW5jdGlvbnMucGRmAAAAIAAAAAEGAAAEAAAAFAAAACgAAAA4AAAATAAAAHgAAACMAAAAoAAAAAgAAAAEAwAAqDgAAAAAAAAIAAAABAMAAKbTBQAAAAAACAAAAAQDAACt0wUAAAAAAAgAAAAEAwAA9dgMAAAAAAAIAAAABAMAADXuFgAAAAAACAAAAAQDAACH7hYAAAAAAAgAAAAEAwAA5UQaAAAAAAAIAAAABAMAAGy1KwAAAAAAIAAAAAEGAAAwAQAAQAEAAFABAABgAQAAcAEAAIABAACQAQAAoAEAAAgAAAAABAAAQcTxXnQTtkYYAAAAAQIAAAEAAAAAAAAADwAAAAAAAAAAAAAAAAAAAAgAAAAEAwAABgAAAAAAAAAEAAAAAwMAAPUBAAAIAAAAAQkAAGZpbGU6Ly8vDAAAAAEBAABNYWNpbnRvc2ggSEQIAAAABAMAAAAAhxE5AAAACAAAAAAEAABBxdYDwYAAACQAAAABAQAAQzlFNzc0RjQtMUZFNy00MzkzLUFGQkUtOThDNjk1MEM5Qzg2GAAAAAECAACBAAAAAQAAAO8TAAABAAAAAAAAAAAAAAABAAAAAQEAAC8AAAAAAAAAAQUAABoAAAABAQAATlNVUkxEb2N1bWVudElkZW50aWZpZXJLZXkAAAQAAAADAwAAEFsAAHIBAAABAgAAOTkzMmU2YjNiOTZkNmZkMWE0ZmQ0ZjFiYjk0MDk4MDBlOTc3MzcwZmY0ZWMxNjE4NDA3ZDc0NWM1OTI3MGI0MjswMDswMDAwMDAwMDswMDAwMDAwMDswMDAwMDAwMDswMDAwMDAwMDAwMDAwMDIwO2NvbS5hcHBsZS5hcHAtc2FuZGJveC5yZWFkLXdyaXRlOzAxOzAxMDAwMDExOzAwMDAwMDAwMDAyYmI1NmM7NjA7L3VzZXJzL21hdHRlb2xvbmdvL2xpYnJhcnkvY2xvdWRzdG9yYWdlL2dvb2dsZWRyaXZlLW1hdHRlby5sb25nb0B1bmlwZC5pdC9pbCBtaW8gZHJpdmUvbGV0dGVyYXR1cmEvY2FzdGVsbGEtd2FuIC0gdGhlIGl3YXNhd2EgbWFpbiBjb25qZWN0dXJlcyBmb3IgZ2wyIGFuZCBkZXJpdmF0aXZlcyBvZiBwLWFkaWMgbC1mdW5jdGlvbnMucGRmAAAA2AAAAP7///8BAAAAAAAAABEAAAAEEAAACAEAAAAAAAAFEAAAsAEAAAAAAAAQEAAA6AEAAAAAAABAEAAA2AEAAAAAAAACIAAAtAIAAAAAAAAFIAAAJAIAAAAAAAAQIAAANAIAAAAAAAARIAAAaAIAAAAAAAASIAAASAIAAAAAAAATIAAAWAIAAAAAAAAgIAAAlAIAAAAAAAAwIAAAwAIAAAAAAAABwAAACAIAAAAAAAARwAAAFAAAAAAAAAASwAAAGAIAAAAAAACA8AAA+AIAAAAAAADIAgCA7AIAAAAAAAAACAANABoAIwCVAAAAAAAAAgEAAAAAAAAABQAAAAAAAAAAAAAAAAAABh0=},
	bdsk-url-1 = {https://mathscinet.ams.org/mathscinet-getitem?mr=4387238},
	bdsk-url-2 = {https://doi.org/10.1016/j.aim.2022.108266}}

@book{Kato1,
	author = {Kato, Kazuya},
	date-added = {2022-07-27 16:38:45 +0000},
	date-modified = {2022-07-27 16:43:16 +0000},
	journal = {LNM 1553},
	publisher = {Springer-Verlag},
	title = {Lectures on the approach to Iwasawa Theory for the Hasse-Weil $L$-function via $\mathbf{B}_{\mathrm{dR}}$.},
	year = {1991}}

@book{BrCo,
	author = {Brinon, Olivier and Conrad, Brian},
	publisher = {preprint},
	title = {{CMI} summer school notes on $p$-adic {H}odge theory},
	url = {https://math.stanford.edu/~conrad/papers/notes.pdf},
	year = {2009},
	bdsk-url-1 = {https://math.stanford.edu/~conrad/papers/notes.pdf}}

@article{Scholze3,
	author = {Scholze, Peter},
	doi = {10.4007/annals.2015.182.3.3},
	fjournal = {Annals of Mathematics. Second Series},
	issn = {0003-486X},
	journal = {Ann. of Math. (2)},
	mrclass = {11S37},
	mrnumber = {3418533},
	mrreviewer = {Kimball L. Martin},
	number = {3},
	pages = {945--1066},
	title = {On torsion in the cohomology of locally symmetric varieties},
	url = {https://doi.org/10.4007/annals.2015.182.3.3},
	volume = {182},
	year = {2015},
	bdsk-url-1 = {https://doi.org/10.4007/annals.2015.182.3.3}}

@article{BL-Coleman,
	author = {B\"{u}y\"{u}kboduk, K\^{a}zim and Lei, Antonio},
	doi = {10.1112/jlms.12471},
	fjournal = {Journal of the London Mathematical Society. Second Series},
	issn = {0024-6107},
	journal = {J. Lond. Math. Soc. (2)},
	mrclass = {11R23 (11F11)},
	mrnumber = {4339947},
	number = {4},
	pages = {1682--1716},
	title = {Interpolation of generalized {H}eegner cycles in {C}oleman families},
	url = {https://doi.org/10.1112/jlms.12471},
	volume = {104},
	year = {2021},
	bdsk-url-1 = {https://doi.org/10.1112/jlms.12471}}

@article{JLZ,
	author = {Jetchev, Dimitar and Loeffler, David and Zerbes, Sarah Livia},
	doi = {10.1112/plms.12363},
	fjournal = {Proceedings of the London Mathematical Society. Third Series},
	issn = {0024-6115},
	journal = {Proc. Lond. Math. Soc. (3)},
	mrclass = {11F67 (11F80 11F85)},
	mrnumber = {4210260},
	mrreviewer = {Nils Matthes},
	number = {1},
	pages = {124--152},
	title = {Heegner points in {C}oleman families},
	url = {https://doi.org/10.1112/plms.12363},
	volume = {122},
	year = {2021},
	bdsk-url-1 = {https://doi.org/10.1112/plms.12363}}

@article{LZ-IwasawaZ2,
	author = {Loeffler, David and Zerbes, Sarah Livia},
	doi = {10.1142/S1793042114500699},
	fjournal = {International Journal of Number Theory},
	issn = {1793-0421},
	journal = {Int. J. Number Theory},
	mrclass = {11R23 (11F80 11G40 11S40)},
	mrnumber = {3273476},
	mrreviewer = {Andreas Nickel},
	number = {8},
	pages = {2045--2095},
	title = {Iwasawa theory and {$p$}-adic {$L$}-functions over {$\Bbb{Z}_p^2$}-extensions},
	url = {https://doi.org/10.1142/S1793042114500699},
	volume = {10},
	year = {2014},
	bdsk-url-1 = {https://doi.org/10.1142/S1793042114500699}}

@article{Ochiai-coleman,
	author = {Ochiai, Tadashi},
	fjournal = {American Journal of Mathematics},
	issn = {0002-9327},
	journal = {Amer. J. Math.},
	mrclass = {11F80 (11R23)},
	mrnumber = {1993743},
	mrreviewer = {Jacques Tilouine},
	number = {4},
	pages = {849--892},
	title = {A generalization of the {C}oleman map for {H}ida deformations},
	url = {http://muse.jhu.edu/journals/american_journal_of_mathematics/v125/125.4ochiai.pdf},
	volume = {125},
	year = {2003},
	bdsk-url-1 = {http://muse.jhu.edu/journals/american_journal_of_mathematics/v125/125.4ochiai.pdf}}

@article{BCK,
	author = {Burungale, Ashay and Castella, Francesc and Kim, Chan-Ho},
	doi = {10.2140/ant.2021.15.1627},
	fjournal = {Algebra \& Number Theory},
	issn = {1937-0652},
	journal = {Algebra Number Theory},
	mrclass = {11R23 (11F33)},
	mrnumber = {4333660},
	number = {7},
	pages = {1627--1653},
	title = {A proof of {P}errin-{R}iou's {H}eegner point main conjecture},
	url = {https://doi.org/10.2140/ant.2021.15.1627},
	volume = {15},
	year = {2021},
	bdsk-url-1 = {https://doi.org/10.2140/ant.2021.15.1627}}

@article{Brasca-Eigen2,
	author = {Brasca, Riccardo},
	date-added = {2021-07-25 16:00:34 +0000},
	date-modified = {2021-07-25 16:01:34 +0000},
	journal = {Compos. Math.},
	number = {1},
	pages = {32--62},
	title = {{$p$}-adic modular forms of non-integral weight over {S}himura curves},
	volume = {149},
	year = {2013}}

@article{Morita,
	author = {Morita, Yasuo},
	date-added = {2021-07-19 14:29:23 +0000},
	date-modified = {2021-07-19 14:36:14 +0000},
	journal = {Hokkaido Math. J.},
	number = {2},
	pages = {209--238},
	title = {Reduction modulo {${\mathfrak P}$} of {S}himura curves},
	volume = {10},
	year = {1981}}

@article{Shnidman,
	author = {Shnidman, Ariel},
	date-added = {2021-05-20 12:30:08 +0000},
	date-modified = {2021-05-20 12:31:25 +0000},
	journal = {Ann. Inst. Fourier (Grenoble)},
	number = {3},
	pages = {1117--1174},
	title = {{$p$}-adic heights of generalized {H}eegner cycles},
	volume = {66},
	year = {2016}}

@article{Chenevier,
	author = {Chenevier, Ga\"{e}tan},
	date-added = {2021-05-17 09:29:50 +0000},
	date-modified = {2021-05-17 09:29:50 +0000},
	journal = {Duke Math. J.},
	number = {1},
	pages = {161--194},
	title = {Une correspondance de {J}acquet-{L}anglands {$p$}-adique},
	volume = {126},
	year = {2005}}

@article{Burungale,
	author = {Burungale, Ashay A.},
	date-added = {2021-05-10 13:09:45 +0000},
	date-modified = {2021-05-10 13:10:30 +0000},
	journal = {J. Inst. Math. Jussieu},
	number = {1},
	pages = {189--222},
	title = {On the non-triviality of the {$p$}-adic {A}bel-{J}acobi image of generalised {H}eegner cycles modulo {$p$}, {II}: {S}himura curves},
	volume = {16},
	year = {2017}}

@article{Brako,
	author = {Brako\v{c}evi\'{c}, Miljan},
	date-added = {2021-05-10 13:07:06 +0000},
	date-modified = {2021-05-10 13:09:36 +0000},
	journal = {Int. Math. Res. Not. IMRN},
	number = {21},
	pages = {4967--5018},
	title = {Anticyclotomic {$p$}-adic {$L$}-function of central critical {R}ankin-{S}elberg {$L$}-value{R}ankin-{S}elberg {$L$}-value},
	year = {2011}}

@article{Kassaei,
	author = {Kassaei, Payman L.},
	date-added = {2021-03-31 08:56:47 +0000},
	date-modified = {2021-03-31 08:58:11 +0000},
	journal = {Compos. Math.},
	number = {2},
	pages = {359--395},
	title = {{$\mathscr P$}-adic modular forms over {S}himura curves over totally real fields},
	volume = {140},
	year = {2004}}

@article{Brooks,
	author = {Hunter Brooks, Ernest},
	date-added = {2021-03-31 08:54:29 +0000},
	date-modified = {2021-03-31 08:55:28 +0000},
	journal = {Int. Math. Res. Not. IMRN},
	number = {12},
	pages = {4177--4241},
	title = {Shimura curves and special values of {$p$}-adic {$L$}-functions},
	year = {2015}}

@article{Magrone,
	author = {Magrone, Paola},
	fjournal = {Annali della Scuola Normale Superiore di Pisa. Classe di Scienze. Serie V},
	issn = {0391-173X,2036-2145},
	journal = {Ann. Sc. Norm. Super. Pisa Cl. Sci. (5)},
	mrclass = {11F11 (11F80 11G40 14G35)},
	mrnumber = {4553539},
	mrreviewer = {Matteo\ Longo},
	number = {4},
	pages = {1807--1870},
	title = {Generalized {H}eegner cycles and {$p$}-adic {$L$}-functions in a quaternionic setting},
	volume = {23},
	year = {2022}}

@book{hida-elementary,
	author = {Hida, Haruzo},
	pages = {xii+386},
	publisher = {Cambridge University Press, Cambridge},
	series = {London Mathematical Society Student Texts},
	title = {Elementary theory of {$L$}-functions and {E}isenstein series},
	volume = {26},
	year = {1993}}

@article{Castella-MathAnn,
	author = {Castella, Francesc},
	date-added = {2019-06-11 09:49:34 +0000},
	date-modified = {2019-06-11 09:49:45 +0000},
	doi = {10.1007/s00208-012-0871-4},
	fjournal = {Mathematische Annalen},
	issn = {0025-5831},
	journal = {Math. Ann.},
	number = {4},
	pages = {1247--1282},
	title = {Heegner cycles and higher weight specializations of big {H}eegner points},
	url = {https://doi.org/10.1007/s00208-012-0871-4},
	volume = {356},
	year = {2013},
	bdsk-url-1 = {https://mathscinet.ams.org/mathscinet-getitem?mr=3072800}}

@article{Castella,
	author = {Castella, Francesc},
	doi = {10.1017/s1474748019000094},
	fjournal = {Journal of the Institute of Mathematics of Jussieu. JIMJ. Journal de l'Institut de Math\'{e}matiques de Jussieu},
	issn = {1474-7480},
	journal = {J. Inst. Math. Jussieu},
	mrclass = {11G40 (11G05 11R23)},
	mrnumber = {4167004},
	number = {6},
	pages = {2127--2164},
	title = {On the {$p$}-adic variation of {H}eegner points},
	url = {https://doi.org/10.1017/s1474748019000094},
	volume = {19},
	year = {2020},
	bdsk-url-1 = {https://doi.org/10.1017/s1474748019000094}}

@article{DT,
	author = {Diamond, Fred and Taylor, Richard},
	journal = {Invent. Math.},
	number = {3},
	pages = {435-462},
	title = {Non-optimal levels of mod l modular representations.},
	url = {http://eudml.org/doc/144177},
	volume = {115},
	year = {1994},
	bdsk-url-1 = {http://eudml.org/doc/144177}}

@article{CL,
	author = {Castella, Francesc and Longo, Matteo},
	fjournal = {Annales Math\'ematiques du Qu\'ebec},
	journal = {Ann. Math. Qu\'e.},
	number = {2},
	pages = {303--324},
	title = {Big {H}eegner points and special values of {$L$}-series},
	volume = {40},
	year = {2016}}

@article{LV-Pisa,
	author = {Longo, Matteo and Vigni, Stefano},
	fjournal = {Annali della Scuola Normale Superiore di Pisa. Classe di Scienze. Serie V},
	journal = {Ann. Sc. Norm. Super. Pisa Cl. Sci. (5)},
	number = {3},
	pages = {859--888},
	title = {Vanishing of special values and central derivatives in {H}ida families},
	volume = {13},
	year = {2014}}

@article{LV-MM,
	author = {Longo, Matteo and Vigni, Stefano},
	fjournal = {Manuscripta Mathematica},
	journal = {Manuscripta Math.},
	number = {3-4},
	pages = {273--328},
	title = {Quaternion algebras, {H}eegner points and the arithmetic of {H}ida families},
	volume = {135},
	year = {2011}}

@incollection{BK,
	address = {Boston, MA},
	author = {Bloch, S. and Kato, Kazuya},
	booktitle = {The {G}rothendieck {F}estschrift, {V}ol.\ {I}},
	pages = {333--400},
	publisher = {Birkh\"auser Boston},
	series = {Progr. Math.},
	title = {{$L$}-functions and {T}amagawa numbers of motives},
	volume = {86},
	year = {1990}}

@article{PR,
	author = {Perrin-Riou, B.},
	fjournal = {Bulletin de la Soci\'et\'e Math\'ematique de France},
	journal = {Bull. Soc. Math. France},
	number = {4},
	pages = {399--456},
	title = {Fonctions {$L$} {$p$}-adiques, th\'eorie d'{I}wasawa et points de {H}eegner},
	volume = {115},
	year = {1987}}

@article{Fouquet,
	author = {Fouquet, Olivier},
	fjournal = {Compositio Mathematica},
	journal = {Compos. Math.},
	number = {3},
	pages = {356--416},
	title = {Dihedral {I}wasawa theory of nearly ordinary quaternionic automorphic forms},
	volume = {149},
	year = {2013}}

@article{CH,
	author = {Castella, Francesc and Hsieh, Ming-Lun},
	doi = {10.1007/s00208-017-1517-3},
	fjournal = {Mathematische Annalen},
	issn = {0025-5831},
	journal = {Math. Ann.},
	mrclass = {11G40 (11F67 11F70 11F85 11S40 11S80)},
	mrnumber = {3747496},
	mrreviewer = {Daniel Barsky},
	number = {1-2},
	pages = {567--628},
	title = {Heegner cycles and {$p$}-adic {$L$}-functions},
	url = {https://doi.org/10.1007/s00208-017-1517-3},
	volume = {370},
	year = {2018},
	bdsk-url-1 = {https://doi.org/10.1007/s00208-017-1517-3}}

@article{BDP,
	author = {Bertolini, Massimo and Darmon, Henri and Prasanna, Kartik},
	fjournal = {Duke Mathematical Journal},
	journal = {Duke Math. J.},
	number = {6},
	pages = {1033--1148},
	title = {Generalized {H}eegner cycles and {$p$}-adic {R}ankin {$L$}-series},
	volume = {162},
	year = {2013}}

@article{Howard-Inv,
	author = {Howard, Benjamin},
	fjournal = {Inventiones Mathematicae},
	journal = {Invent. Math.},
	number = {1},
	pages = {91--128},
	title = {Variation of {H}eegner points in {H}ida families},
	volume = {167},
	year = {2007}}

@article{ChHs2,
	author = {Chida, Masataka and Hsieh, Ming-Lun},
	fjournal = {Compositio Mathematica},
	journal = {Compos. Math.},
	number = {5},
	pages = {863--897},
	title = {On the anticyclotomic {I}wasawa main conjecture for modular forms},
	volume = {151},
	year = {2015}}

@incollection{Kat,
	author = {Katz, Nicholas},
	booktitle = {Algebraic surfaces (Orsay, 1976-78)},
	pages = {138--202},
	publisher = {Springer, Berlin},
	series = {Lecture Notes in Math.},
	title = {{S}erre--{T}ate local moduli},
	volume = {868},
	year = {1981}}

@article{Buz,
	author = {Buzzard, Kevin},
	fjournal = {Duke Mathematical Journal},
	journal = {Duke Math. J.},
	number = {3},
	pages = {591--612},
	title = {Integral models of certain Shimura Curves},
	volume = {87},
	year = {1997}}

@article{KLZ.ERL,
	author = {Kings, Guido and Loeffler, David and Zerbes, Sarah Livia},
	doi = {10.4310/CJM.2017.v5.n1.a1},
	fjournal = {Cambridge Journal of Mathematics},
	issn = {2168-0930},
	journal = {Camb. J. Math.},
	mrclass = {11F85 (11F67 11G40 14G35 14H52)},
	mrnumber = {3637653},
	mrreviewer = {Ivan Mati\'{c}},
	number = {1},
	pages = {1--122},
	title = {Rankin-{E}isenstein classes and explicit reciprocity laws},
	url = {https://doi.org/10.4310/CJM.2017.v5.n1.a1},
	volume = {5},
	year = {2017},
	bdsk-url-1 = {https://doi.org/10.4310/CJM.2017.v5.n1.a1}}
\end{document}